\def\F{\mathbb{F}}
\def\deg{{\rm deg}}
\def\Ker{{\rm Ker}}
\newcommand{\Spec}{\operatorname{Spec}}
\newcommand{\Proj}{\operatorname{Proj}}
\newtheorem{theorem}{Theorem}[section]
\newtheorem{lemma}[theorem]{Lemma}
\newtheorem{proposition}{Proposition}[section]
\theoremstyle{definition}
\newtheorem{definition}[theorem]{Definition}
\newtheorem{example}[theorem]{Example}
\newtheorem{algorithm}[theorem]{Algorithm}
\theoremstyle{remark}
\newtheorem{remark}[theorem]{Remark}
\numberwithin{equation}{section}
\begin{document}

\title[Computing the space of differential forms of a plane curve]{Computing the space of differential forms of\\ a plane curve and its Cartier-Manin matrix}


\author{Momonari Kudo}
\address{}
\curraddr{}
\email{kudo@mist.i.u-tokyo.ac.jp	}
\thanks{Graduate School of Information Science and Technology, The University of Tokyo}

\author{Shushi Harashita}
\address{}
\curraddr{}
\email{harasita@ynu.ac.jp}
\thanks{Graduate School of Environment and Information Sciences, Yokohama National University}

\subjclass[2010]{Primary: 14Q05, 14F40, 14G15, 14G17}

\keywords{Plane curves, Differential forms, Cartier-Manin matrices}

\date{}

\dedicatory{}


\maketitle

\begin{abstract}
In this paper, we propose a feasible algorithm to give an explicit basis of the space of regular differential forms on the nonsingular projective model of any given plane algebraic curve.
The algorithm is demonstrated for concrete examples, with our implementation over the computer algebra system Magma.
As an application, we also describe the Cartier-Manin matrix of the nonsingular projective curve with respect to the basis computed by the algorithm.
\end{abstract}

\section{Introduction}\label{sec:Intro}

\subsection{Background}
Let $p$ be a rational prime, and $k$ a perfect field of characteristic $p$.
Let $C$ be a geometrically irreducible plane curve over $k$ define by $F(x,y) = 0$ with $F \in k[x,y]$, and let ${\tilde C}$ denote the desingularization of the Zariski closure $C'$ in ${\mathbb P^2}$ of $C$.
We denote by $\varOmega_{\tilde C}$ the sheaf of regular differential forms on $\tilde{C}$, and ${\mathcal V}$ denotes the Cartier operator on the cohomology group $H^0({\tilde C},\varOmega_{\tilde C})$.
A matrix representing $\mathcal{V}$ with respect to a suitable basis for $H^0({\tilde C},\varOmega_{\tilde C})$ is called a {\it Cartier-Manin matrix} for $\tilde{C}$.
Computing a basis of $H^0({\tilde C},\varOmega_{\tilde C})$ and the Cartier-Manin matrix is a very important task both in theory and computation, since they are used to compute various invariants such as $a$-number, $p$-rank, and so on for the classification of curves.
Indeed, there are many works on this task, e.g., \cite{Manin}, \cite{Yui}, \cite{Gonzalez}, \cite{Archinard}, \cite{Bostan}, \cite{HS}, \cite{Sutherland}, \cite{OH}.
As in \cite{Gonzalez}, some works were on the first cohomology $H^1(\tilde{C},{\mathcal O}_{\tilde{C}})$ of the structure sheaf $\mathcal{O}_{\tilde{C}}$, which is the dual notion of the space of regular differential forms and on which the natural action of the Frobenius with respect to a basis is called the {\it Hasse-Witt matrix}.
For example, for the case of genus-5 and trigonal curve $C$, we compute $H^1(\tilde{C},{\mathcal O}_{\tilde{C}})$ and a Hasse-Witt matrix on it, see our preceding paper \cite{trigonal}.
The first author also obtained a generalized algorithm to compute coherent cohomologies of projective schemes and the Frobenius actions on them, see \cite{Kudo2}.

Among previous works including those described above, one of the most important results is St\"ohr-Voloch's beautiful formula~\cite[Theorem 1.1]{SV} (see \eqref{formula:Stohr-Voloch} below for a recall).
By their formula, once a basis of $H^0 (\tilde{C}, \varOmega_{\tilde{C}})$ is given explicitly, the Cartier-Manin matrix of $\tilde{C}$ with respect to the basis can be also computed.
As for the computation of a basis of $H^0 (\tilde{C}, \varOmega_{\tilde{C}})$, when $C'$ is already nonsingular, it is known that under the assumptions {\bf (A1)} and {\bf (A2)} in Section \ref{sec:main} below, $(x^i y^j  / (\partial F / \partial y)) dx$ with $0 \leq i + j \leq \mathrm{deg}(F)-3$ form a basis of $H^0({\tilde C},\varOmega_{\tilde C})$ (this fact can be viewed as a particular case of Gorenstein's result described below).
On the other hand, when $C'$ is singular, such explicit bases are found only for {\it particular} cases, e.g., \cite{Manin}, \cite{Yui} for hyperelliptic curves, \cite{Gonzalez} for Fermat curves, \cite{Sutherland} for superelliptic curves, and \cite{Archinard}, \cite{OH} for curves associated to Appell-Lauricella hypergeometic series.
For example, Sutherland recently constructed an efficient algorithm to compute Cartier-Manin matrices for superelliptic curves in \cite{Sutherland}, where he found a basis of $H^0(\tilde{C}, \varOmega_{\tilde{C}})$ to apply St\"ohr-Voloch's formula.
See also \cite{Bostan} and \cite{HS} for efficient algorithms to compute Cartier-Manin matrices for hyperelliptic curves.

\subsection{Main results}
In this paper, we shall present an algorithm for finding a basis of $H^0({\tilde C},\varOmega_{\tilde C})$ for {\it general} (i.e., possibly singular) plane curves, based on the work by Gorenstein \cite{Gorenstein}.
In \cite[Theorem 12]{Gorenstein}, the necessary and sufficient condition for a rational differential form to be regular was given in terms of conductor, but unfortunately in the statement an assumption on infinite places was added.
As Gorenstein remarked, the assumption can be removed if $k$ is infinite, but it causes a problem when we want to have a basis over $k$ when $k$ is finite.
To resolve the problem,
in Theorem \ref{thm2:Gorenstein} below
we shall give a more general statement than \cite[Theorem 12]{Gorenstein}.
Then our problem is reduced to describing the conductors
of the coordinate rings of two affine covers of the Zariski closure $C'$
in ${\mathbb P}^2$, see Subsection \ref{subsec:Gorenstein} below for Gorenstein's result and this observation.
From our observation, we construct an algorithm to compute a basis of $H^0({\tilde C},\varOmega_{\tilde C})$, by realizing the following two steps:

\begin{enumerate}
\item[{\bf Step A.}] Compute an explicit basis of the conductor $\mathfrak{C}_{k[C]}$ as a $k[x]$-module.
\item[{\bf Step B.}] Compute the truncation of $\mathfrak{C}_{k[C]}$ by the degree described in Theorem \ref{thm2:Gorenstein}.
\end{enumerate}
For Step A, we would like to apply M{\v n}uk's algorithm~\cite{Mnuk}, whose framework is:
\begin{enumerate}
\item[(M1)] Compute a basis $\{ w_1, \ldots , w_N \}$ of the integral closure $\overline{k[C]}$ as a $k[x]$-module.
\item[(M2)] By computing the trace of $ w_i w_j $ for each $1 \leq i \leq j \leq N$, construct a basis $\{ w_1^{\ast}, \ldots , w_N^{\ast} \}$ of the {\it complementary module} $\mathcal{C}_{\overline{k[C]}/k[x]}$ (see Definition \ref{def:comp_mod} below for the definition).
Note that this basis is dual to $\{ w_1, \ldots , w_N \}$, and $\{ (\partial F /\partial y) w_i^{\ast} : 1 \leq i \leq j \leq N \}$ is a desired basis of $\mathfrak{C}_{k[C]}$.
\end{enumerate}
However, we could not directly apply M\v{n}uk's algorithm for Step A, since he assumed that the characteristic of $k$ is zero.
Moreover, he utilized an algorithm~\cite{Hoeij} for (M1), but \cite{Hoeij} also assumes $\mathrm{char}(k) = 0$.
Here, our solutions for realizing Step A are:
\begin{enumerate}
\item[(i)] We confirm in Subsection \ref{subsec:complementary} that M{\v n}uk's framework works well even in the positive characteristic case.
\item[(ii)] We present an alternative approach based on the Gr\"obner basis computation with respect to plural module orders (TOP, POT and their product orders).
In our alternative approach, the integral closure $\overline{k[C]}$ is realized as an affine ring by the normalization method~\cite{GP}, and then its basis will be constructed mainly by the syzygy computation. 
\end{enumerate}
As for Step B, we compute a Gr\"{o}bner basis and the reduced row echelon form of a Macaulay matrix, see Step B of Algorithm \ref{alg:main} in Subsection \ref{subsec:main} below for details.

We implemented our algorithm over a computer algebra system, Magma~\cite{Magma}, and demonstrate it for concrete examples, which will be helpful for the further use of our algorithm.
As an application of giving a basis of the space of regular differential forms of our curve $\tilde C$, in Section \ref{sec:Cartier-Manin} we also describe the Cartier-Manin matrix of $\tilde C$ with respect to the basis, where we use the result by St\"ohr and Voloch \cite{SV}.
Furthermore, we give a remark on finding a basis of the dual space $H^1({\tilde C}, {\mathcal O}_{\tilde C})$.

\subsection{Organization}
The rest of this paper is organized as follows:
In Section \ref{sec:preliminaries}, we review Gorenstein's work and basic facts on complementary modules.
Section \ref{sec:normalization} is devoted to computations of syzygy modules and normalizations.
In Section \ref{sec:main}, we present our main algorithm, and in Section \ref{sec:implementation}, we demonstrate the algorithm on an example with our implementation over Magma~\cite{Magma}.
In Section \ref{sec:Cartier-Manin}, we explain a way to compute the Cartier-Manin matrix of a given plane curve with respect to a basis of the differential forms, computed in the previous sections.
Section \ref{sec:conc} is devoted to concluding remarks.

\section{Preliminaries}\label{sec:preliminaries}
In this section, we collect fundamental results,
which are necessary to describe our algorithm.

\subsection{Gorenstein's work}\label{subsec:Gorenstein}
We start with recalling Gorenstein's description
of the regular differential forms on the nonsingular projective model of a plane curve.

Let $C$ be a plane curve $\Spec A$, say
\[
A = k[x,y]/\langle F \rangle
\]
for an element $F=F(x,y) \in k[x,y]$.
We assume that $F$ is irreducible over the algebraic closure of $k$.
Let $N$ be the degree of $F$.
Consider the embedding of ${\mathbb A}^2=\Spec k[x,y]$ into ${\mathbb P}^2 = \Proj k[X,Y,Z]$ by $x=X/Z$ and $y=Y/Z$. Let $C'$ be the Zariski closure of $C$ in ${\mathbb P}^2$. Let $\tilde C$ be the desingularization of $C'$.
In this paper, we propose an algorithm to get
a basis of $H^0({\tilde C},\varOmega_{\tilde C})$ and to obtain
the Cartier-Manin matrix of ${\tilde C}$ with respect to the basis.

Let $k(C)$ be the function field of $C$ ($=$ that of $\tilde C$).
We may assume that
$k(C)$ is separably generated over $k$:
moreover after a coordinate change, we assume that 
\begin{enumerate}
\item[$\small\bullet$]
$x\in k(C)$ is transcendental over $k$ and 
\item[$\small\bullet$]
$F(x,y)$ is a monic polynomial in $y$ over $k[x]$ of degree $N$,
which is separable as a polynomial over $k(x)$,
\end{enumerate}
which are equivalent to {\bf (A1)} and {\bf (A2)} in Section \ref{sec:main} below.


Recall Gorenstein's result:
\begin{theorem}[{\cite[Theorem 12]{Gorenstein}}]\label{thm:Gorenstein}
Assume that $C'$ is nonsingular at any point of $C'\smallsetminus C$. Then
\[
H^0({\tilde C},\varOmega_{\tilde C}) = \left\{\left.
\frac{\phi(x,y)}{F_y} dx
\ \right|\ \phi \in {\frak C}_A,\ \deg(\phi)\leq N-3\right\}
\]
with $F_y=\frac{\partial F}{\partial y}$, where ${\frak C}_A$ is the conductor of $A=k[x,y]/\langle F \rangle$ in its integral closure $\overline A$ in $k(C)$, i.e., ${\frak C}_A = \{z\in A \mid z{\overline A} \subset A\}$.
\end{theorem}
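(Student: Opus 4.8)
The plan is to check, place by place, when a rational differential $\omega$ on $\tilde C$ is regular, after first reducing it to the prescribed shape. Since $x$ is transcendental over $k$ and $k(C)/k(x)$ is separable, $\varOmega^{1}_{k(C)/k}=k(C)\,dx$, so any rational differential equals $g\,dx$ for a unique $g\in k(C)$; putting $\phi:=gF_y\in k(C)$ we may write $\omega=\tfrac{\phi}{F_y}\,dx$, and the task is to determine for which $\phi$ this is everywhere regular. I would split the places of $k(C)$ into the \emph{finite} ones (lying over a maximal ideal of $k[x]$) and the \emph{infinite} ones (lying over $x=\infty$). Correspondingly $\tilde C$ is glued from $\Spec\overline A$ (its part over $\Spec k[x]$: here $A$ is module-finite over $k[x]$ because $F$ is monic in $y$, so $\Spec\overline A$ is its normalization, a smooth affine curve) and a neighbourhood of the points at infinity; by the hypothesis that $C'$ is nonsingular along $C'\smallsetminus C$, these latter points are exactly $C'\cap\{Z=0\}$ and $\tilde C\to C'$ is an isomorphism there.

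\emph{Finite places.} On the smooth affine curve $\Spec\overline A$ the regular differentials form the invertible module $\varOmega^{1}_{\overline A/k}$; using $\varOmega^{1}_{k[x]/k}=k[x]\,dx$, finiteness (hence freeness) of $\overline A$ over $k[x]$, and nondegeneracy of the trace form (separability), the usual duality gives that $z\,dx$ is regular on $\Spec\overline A$ exactly when $z$ lies in the complementary module $\mathcal C_{\overline A/k[x]}=\{z\in k(C):\mathrm{Tr}_{k(C)/k(x)}(z\overline A)\subseteq k[x]\}$. Thus $\omega$ is regular at all finite places iff $\phi/F_y\in\mathcal C_{\overline A/k[x]}$. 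I would then prove $\mathcal C_{\overline A/k[x]}=\tfrac1{F_y}\mathfrak C_A$: Euler's formula for the monogenic order $A=k[x,y]/\langle F\rangle$ gives $\mathcal C_{A/k[x]}=\tfrac1{F_y}A$; since $A\subseteq\overline A$ forces $\mathcal C_{\overline A/k[x]}\subseteq\mathcal C_{A/k[x]}=\tfrac1{F_y}A$ and $\mathcal C_{\overline A/k[x]}$ is an $\overline A$-module, one gets both inclusions — if $z\in\tfrac1{F_y}\mathfrak C_A$ then $z\overline A\subseteq\tfrac1{F_y}A$, so $\mathrm{Tr}(z\overline A)\subseteq k[x]$; and conversely $F_y\,\mathcal C_{\overline A/k[x]}$ is an $\overline A$-ideal contained in $A$, hence contained in $\mathfrak C_A$. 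So $\omega$ is regular at every finite place iff $\phi\in\mathfrak C_A$; in particular $\phi$ is then a polynomial, which we normalize so that $\deg_y\phi<N$.

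\emph{Infinite places.} Assuming $\phi\in\mathfrak C_A$, it remains to show that $\omega$ is regular at each $P\in C'\cap\{Z=0\}$ precisely when $\deg\phi\le N-3$. I would pass to an affine chart at infinity, say $Y\neq0$ with $s=Z/Y$ and $t=X/Y$, so $x=t/s$, $y=1/s$ and $dx=(s\,dt-t\,ds)/s^{2}$, rewrite $\omega$ in these coordinates, and use smoothness of $C'$ at $P$ — so that the vanishing of the denominator is governed by a nonvanishing partial derivative of the homogenization of $F$ — to compute $\mathrm{ord}_P(\omega)$; the powers of $s=1/y$ contributed by $\phi$, by $F_y$, and by $dx$ then combine to give $\mathrm{ord}_P(\omega)\ge0$ for every $P$ above infinity exactly under the bound $\deg\phi\le N-3$. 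Equivalently, this is the classical ``adjoint'' picture: $\phi\in\mathfrak C_A$ records the adjointness conditions at the finite singular points, and $\deg\phi\le N-3$ is what the line at infinity contributes, with no further condition there because $C'$ is smooth along it. I expect this step to be the main obstacle: making the local order computation uniform over all points of $C'\cap\{Z=0\}$, including the cases where $C'$ meets the line at infinity non-transversally (though still smoothly) or passes through $[1:0:0]$, and extracting from it precisely the exponent $N-3$.

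Combining the two cases yields $H^{0}(\tilde C,\varOmega_{\tilde C})=\{\tfrac{\phi}{F_y}dx:\phi\in\mathfrak C_A,\ \deg\phi\le N-3\}$, as claimed. As a sanity check, when $A$ is already normal one has $\mathfrak C_A=A$ and the description reduces to the classical basis $\{x^{i}y^{j}/F_y\,dx:0\le i+j\le N-3,\ j<N\}$, of cardinality $\binom{N-1}{2}=\dim_k H^{0}(\tilde C,\varOmega_{\tilde C})$; and the two ingredients of the finite-place step, the identification of regular forms with the complementary module and the conductor formula $\mathcal C_{\overline A/k[x]}=\tfrac1{F_y}\mathfrak C_A$, both remain valid in characteristic $p$ under the separability assumption, so it is only the infinite-place computation that requires care.
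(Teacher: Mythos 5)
Your overall strategy---split the places of $k(C)$ into those over $\Spec k[x]$ and those at infinity, identify the differentials regular on $\Spec\overline A$ with $\mathcal C_{\overline A/k[x]}\,dx$, and convert that to the conductor via $\mathcal C_{\overline A/k[x]}=\tfrac1{F_y}\mathfrak C_A$---is exactly the route of the paper and of Gorenstein, and your finite-place argument (Euler's formula for the monogenic order, plus the two inclusions giving the conductor formula) is complete and correct; it is precisely the mechanism of Theorem \ref{cor:Mnuk} and Lemmas \ref{lem:dual_basis}--\ref{lem:PID}. The genuine gap is the step you yourself flag: the analysis at infinity is asserted rather than carried out, and that is where the exponent $N-3$ actually has to be produced. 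Two concrete repairs are needed. First, the chart $Y\neq 0$ need not contain all of $C'\smallsetminus C$: the point $[1:0:0]$ can lie on $C'$ (e.g.\ $F=y^N+x$), whereas under {\bf (A1)} every point of $C'\smallsetminus C$ has $X\neq 0$ (if $Z=X=0$, the term $Y^N$ forces $Y=0$), so the correct second chart is $X\neq 0$. Second, a bare-hands computation of $\ord_P(\omega)$ at the points at infinity must handle tangency of $C'$ to the line $Z=0$ uniformly, which is exactly what you defer.

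The paper closes this without any new local computation (proof of Theorem \ref{thm2:Gorenstein}): on the chart $X\neq 0$ set $x'=1/x$, $y'=y/x$ and $F'=F/x^N$; then $F'$ is again monic of degree $N$ in $y'$, $x'$ is again a separating variable, and $\omega=-\frac{\phi'}{F'_{y'}}\,dx'$ with $\phi'=\phi/x^{N-3}$, so your own finite-place criterion applies verbatim to $A'=k[x',y']/\langle F'\rangle$ and gives regularity over that chart iff $\phi'\in\mathfrak C_{A'}$. When $C'$ is nonsingular along $Z=0$, the local rings at infinity are already normal, so there the condition degenerates to $\phi'$ having no pole, i.e.\ $\phi'=\sum c_{ij}(x')^{\,N-3-i-j}(y')^{\,j}\in A'$, i.e.\ $\deg\phi\le N-3$; at the finite points of that chart the condition $\phi'\in\mathfrak C_{A',P}$ is equivalent to $\phi\in\mathfrak C_{A,P}$ (as $x$ is a unit there), hence already implied by $\phi\in\mathfrak C_A$. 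Substituting this symmetry argument for your order computation completes the proof.
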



The aim of this paper is to give an algorithm to give a basis of
the space of $\phi(x,y)$ in the theorem and to consider the case where the assumption in the theorem is removed.
Note that if $k$ is small, then we may not remove the assumption by
taking any $k$-linear coordinate-change (cf.\ Example \ref{exam:Cartier-Manin}, (3)).
Also we give implementations of these algorithms over a computer algebra system, Magma~\cite{Magma}.

The next theorem can be applicable even if $C'$ has a singularity at $C'\smallsetminus C$.
The proof has already been done essentially in the proof of \cite[Theorem 12]{Gorenstein}.
\begin{theorem}\label{thm2:Gorenstein}
With the notation above, we have
\[
H^0({\tilde C},\varOmega_{\tilde C}) = \left\{\left.
\frac{\phi(x,y)}{F_y} dx \ \right|\ 
\phi \in {\frak C}_A,\ \phi' \in {\frak C}_{A'}
\right\},
\]
where
$A'=k[x',y']/\langle F' \rangle$, where $F'(x',y')=F(x,y)/x^N$ with $(x',y')=(1/x,y/x)$, and
$\phi'(x',y'):=\phi(x,y)/x^{N-3}$.
(Note that $\phi' \in {\frak C}_{A'}$ especially requires $\deg(\phi)\le N-3$, as ${\frak C}_{A'} \subset A'$.)

\end{theorem}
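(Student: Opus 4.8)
The plan is to isolate the purely affine part of Gorenstein's argument --- the part that does not involve the points at infinity at all --- and then to apply it on each of the two standard affine charts of $C'$: the chart $Z\ne 0$, which recovers $A$, and the chart $X\ne 0$, which will produce $A'$.

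First I would extract from the proof of \cite[Theorem 12]{Gorenstein} the following affine statement, whose argument there uses neither smoothness at infinity nor any global input, and which applies verbatim to $A$ and to $A'$ (both of the shape $k[x,y]/\langle F\rangle$ with $F$ monic and separable in $y$): a rational differential $\tfrac{\phi}{F_y}\,dx$ with $\phi\in k(C)$ is regular at every point of $\tilde C$ mapping into $\Spec A$ if and only if $\phi\in\mathfrak{C}_A$; equivalently, $\varOmega_{\overline A/k}=\tfrac{1}{F_y}\mathfrak{C}_A\,dx$ inside $\varOmega_{k(C)/k}$. This last identity is the one genuinely non-formal point: the complementary module of $A$ over $k[x]$ for the trace form is $\tfrac{1}{F_y}A$ (the Euler--Jacobi formula for a monogenic extension), multiplying it by the conductor $\mathfrak{C}_A=(A:_{k(C)}\overline A)$ yields the complementary module of $\overline A$ over $k[x]$, and the latter equals $\varOmega_{\overline A/k}\cdot(dx)^{-1}$ because $k(C)/k$ is separably generated, so $\overline A$ is $k$-smooth and $\varOmega_{\overline A/k}$ is an invertible module equal to the inverse different over $k[x]$ --- this is exactly where the standing separable-generation hypothesis has to be invoked, and the only subtlety in positive characteristic. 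In \cite[Theorem 12]{Gorenstein} the hypothesis on infinite places is used only to render regularity at the points of $C'\smallsetminus C$ automatic; here we instead reach those points through the second chart.

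Next I would set up the two-chart cover. Since $F$ is monic of degree $N$ in $y$, its homogenization contains $Y^N$ with unit coefficient, hence $[0:1:0]\notin C'$, and therefore $C'=U_0\cup U_1$ with $U_0=C'\cap\{Z\ne 0\}=\Spec A$ and $U_1=C'\cap\{X\ne 0\}$. Dehomogenizing with respect to $X$ and putting $(x',y')=(Z/X,Y/X)=(1/x,y/x)$ identifies the coordinate ring of $U_1$ with $A'=k[x',y']/\langle F'\rangle$, where $F'(x',y')=F(x,y)/x^N$; moreover $F'$ is again monic of degree $N$ and separable in $y'$, being the minimal polynomial of $y'$ over $k(x')=k(x)$. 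Writing $\nu\colon\tilde C\to C'$ for the normalization, the opens $\nu^{-1}(U_0)=\Spec\overline A$ and $\nu^{-1}(U_1)=\Spec\overline{A'}$ cover $\tilde C$, so a rational differential lies in $H^0(\tilde C,\varOmega_{\tilde C})$ precisely when it is regular at every point over $U_0$ and at every point over $U_1$.

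Finally I would transport the differential between the charts: from $x=1/x'$ one gets $dx=-x^2\,dx'$, and differentiating $F'(x',y')=(x')^{N}F(1/x',y'/x')$ in $y'$ gives $F'_{y'}=(x')^{N-1}F_y$, whence
\[
\frac{\phi(x,y)}{F_y}\,dx \;=\; -\frac{\phi(x,y)/x^{N-3}}{F'_{y'}}\,dx' \;=\; -\frac{\phi'(x',y')}{F'_{y'}}\,dx',
\]
with $\phi'(x',y')=\phi(x,y)/x^{N-3}$ exactly as in the statement. Applying the affine statement to $A$ identifies regularity over $U_0$ with $\phi\in\mathfrak{C}_A$, and applying it to $A'$ (the overall sign being irrelevant) identifies regularity over $U_1$ with $\phi'\in\mathfrak{C}_{A'}$ --- a membership which already forces $\phi$ to be a polynomial of total degree $\le N-3$, recovering the parenthetical remark. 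Combining the two conditions yields the stated description of $H^0(\tilde C,\varOmega_{\tilde C})$; apart from the affine statement everything here is routine bookkeeping, which is presumably the sense in which the proof is already essentially contained in that of \cite[Theorem 12]{Gorenstein}.
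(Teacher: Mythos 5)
Your proposal is correct and follows essentially the same route as the paper's proof: cover $C'$ by the charts $Z\ne 0$ and $X\ne 0$ (using that $F$ monic of degree $N$ in $y$ forces $[0:1:0]\notin C'$), invoke the affine part of Gorenstein's argument on each chart, and relate the two charts by the change of variables giving $\frac{\phi}{F_y}dx=-\frac{\phi'}{F'_{y'}}dx'$. Your write-up merely supplies more detail than the paper does (the explicit identities $dx=-x^2dx'$ and $F'_{y'}=(x')^{N-1}F_y$, and the complementary-module justification of the affine criterion), but the argument is the same.
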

\begin{proof}
As in the proof of \cite[Theorem 12]{Gorenstein},
the differential form $\omega:=\frac{\phi(x,y)}{F_y} dx$
with separating variable $x$
is regular at every point of $\Spec(A)$ if and only if $\phi\in {\frak C}_A$,
and the differential form $\omega$ is written as
$-\frac{\phi'(x',y')}{F'_{y'}} dx'$ over the open affine part $U$ of $X\ne 0$,
where $\phi'$ is as in the statement of the theorem.
Let $F_0(X,Y,Z)$ be the homogenization of $F(x,y)$ (i.e., the defining equation of $C'$). We claim that $X\ne 0$ at any point of $C'\smallsetminus C$.
Indeed by the assumption that $F(x,y)$ contains the term $y^N$ with $N=\deg(F)$, if $Z=X=0$ held, then $Y=0$ has to hold. By this claim, it suffices to discuss
the regularity of $\omega$ over $U$.
This is equivalent to $\phi'\in {\frak C}_{A'}$,
since $x'$ is a separating variable of $k(C)$
and $F'(x',y')$ is a monic polynomial in $y'$ over $k[x']$ of degree $N$. The theorem clearly follows from these facts.
\end{proof}
\begin{remark}\label{rem:Gorenstein}
\begin{enumerate}
\item[(1)]
Here, $\phi(x,y)\in {\frak C}_A$
is equivalent to $\phi(x,y) \in {\frak C}_{A,P}=\{z\in A_P : z {\overline{A_P}} \subset A_P\}$
for any maximal ideal $P$ of $A$, where ${\overline{A_P}}$ is the integral closure of $A_P$ in $k(C)$.
Moreover ${\frak C}_{A,P} = {\frak C}_{A,P}^* \cap A_P$, where
${\frak C}_{A,P}^*$ is the conductor of $A_P^*$ in ${\overline{A_P}}^*$,
where $*$ means taking the completion (cf.\ \cite[Theorem 2]{Gorenstein}).
For example, if $C'$ has a single singular point, say $P$, in $C'\smallsetminus C$, then in order to see $\frac{\phi(x,y)}{F_y} dx \in H^0({\tilde C},\varOmega_{\tilde C})$ for $\phi \in {\frak C}_A$ it suffices to check $\phi' \in {\frak C}_{A',P}$ instead of checking $\phi' \in {\frak C}_{A'}$.
\item[(2)] As written in Theorem \ref{thm2:Gorenstein}, the degree of $\phi$ is at most $N-3$.
Similarly $\deg(\phi')\le N-3$ hold. 
Hence, the space $H^0({\tilde C}, \varOmega_{\tilde C})$ is the intersection of
${\frak C}_A^{(N-3)} := \{ \phi \in {\frak C}_A \mid \deg(\phi) \leq N-3\}$
and ${\frak C}_{A'}^{(N-3)}$,
where we identify $\phi \in {\frak C}_A^{(N-3)}$ and $\phi' \in {\frak C}_{A'}^{(N-3)}$ 
with the notation of Theorem \ref{thm2:Gorenstein}.
Since ${\frak C}_A^{(N-3)}$ and ${\frak C}_{A'}^{(N-3)}$
 are finite dimensional $k$-vector spaces,
it suffices to give an algorithm to describe $\frak C_A^{(N-3)}$,
in order to have an algorithm to describe $H^0({\tilde C}, \varOmega_{\tilde C})$.
\end{enumerate}
\end{remark}


\subsection{Complementary modules}\label{subsec:complementary}

As we have seen in Subsection \ref{subsec:Gorenstein}, regular deferential forms of $\tilde{C}$ are obtained from the conductor $\mathfrak{C}$ of $k[C]$.
In this subsection, we review the notion of {\it complementary modules}, which enables us to compute a basis of $\mathfrak{C}$.
Let us start with stating the definition of a complementary module of an integrally closed domain.

\begin{definition}[Complementary modules]\label{def:comp_mod}
Let $R$ be an integrally closed domain, and $K$ its field of fractions.
Let $K'$ be a finite separable (and thus simple) extension of $K$.
Let $\mathrm{Tr}_{K'/K} : K' \rightarrow K$ be the trace map for $K' / K$.
For an integral extension $\tilde{R} \subset K'$ of $R$, we call an $R$-module
\[
\mathcal{C}_{\tilde{R}/R} := \{ z \in K' : \mathrm{Tr}_{K'/K} (z \tilde{R}) \subset R \}
\]
the {\it complementary module} of $\tilde{R}$ with respect to $R$.
\end{definition}

Note in Definition \ref{def:comp_mod} that $\tilde{R} \subset \mathcal{C}_{\tilde{R}/R}$ since $\mathrm{Tr}_{K'/K}(r) \in R$ for all $r \in \tilde{R}$.
The next theorem is a consequence of \cite[Ch.~V, \S~11, Theorem 29]{ZS}.

\begin{theorem}[\cite{Mnuk}, Corollary 3.1]\label{cor:Mnuk}
Let the notation be same as in Definition \ref{def:comp_mod}.
Assume that $R$ is a Dedekind ring.
Let ${\overline R}$ be the integral closure of $R$ in $K'$.
Let $y \in {\overline R}$ be an element such that $K' =K(y)$, and $f(Y)$ be a minimal polynomial of $y$ over $K$.
Then we have
\[
\mathfrak{f}({\overline R}/R[y]) = f' (y) \mathcal{C}_{{\overline R}/R} ,
\]
where $\mathfrak{f}({\overline R}/R[y])$ is the conductor of $R[y]$ in ${\overline R}$, that is,
\begin{eqnarray}
\mathfrak{f}({\overline R}/R[y]) &=& \{ a \in R[y] : \forall b \in {\overline R},\ a \cdot (b \bmod{R[y]}) = 0 \bmod{R[y]} \} \nonumber \\
&=& \{ a \in R[y] : a {\overline R} \subset R[y] \}. \nonumber
\end{eqnarray}
\end{theorem}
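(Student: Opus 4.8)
The plan is to reduce everything to the classical description of the complementary module of a monogenic extension, which is precisely \cite[Ch.~V, \S~11, Theorem~29]{ZS}, and then to deduce the stated identity by a short formal computation with the definition of the complementary module. Throughout, write $A := R[y]$ and $B := \overline{R}$, so that $R \subseteq A \subseteq B \subseteq K' = K(y)$ with $K$ the field of fractions of $R$. Since $R$ is integrally closed and $y$ is integral over $R$, the minimal polynomial $f(Y)$ already lies in $R[Y]$ and is monic; put $n := \deg f = [K':K]$. As $K'/K$ is separable, $f$ is a separable polynomial, hence $f'(y) \neq 0$ and multiplication by $f'(y)$ is a bijection of $K'$. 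Finally, because $1 \in B$, the conductor may be written intrinsically as
\[
\mathfrak{f}(\overline{R}/R[y]) = \{\, a \in K' : a B \subseteq A \,\}.
\]

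First I would record the single substantial input: Euler's formula $\mathcal{C}_{A/R} = f'(y)^{-1} A$. This comes from \cite[Ch.~V, \S~11, Theorem~29]{ZS}: writing $f(Y)/(Y-y) = \sum_{i=0}^{n-1} b_i Y^i$ in $A[Y]$, the family $\{\, b_i/f'(y) : 0 \le i \le n-1 \,\}$ is the dual $R$-basis of $\{1, y, \dots, y^{n-1}\}$ with respect to $\mathrm{Tr}_{K'/K}$, and the $b_i$ are themselves an $R$-basis of $A$ (they expand in terms of $1, y, \dots, y^{n-1}$ via a triangular matrix with units on the diagonal), so $\mathcal{C}_{A/R} = \sum_i R\cdot b_i/f'(y) = f'(y)^{-1}A$. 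I would also note two properties of the complementary-module construction that are immediate from Definition~\ref{def:comp_mod}: (i) it is inclusion-reversing, so $A \subseteq B$ yields $\mathcal{C}_{B/R} \subseteq \mathcal{C}_{A/R}$; (ii) $\mathcal{C}_{B/R}$ is stable under multiplication by $B$ (since $cB \subseteq B$ for $c \in B$), and $\mathrm{Tr}_{K'/K}(\mathcal{C}_{A/R}) \subseteq R$ (apply the defining condition to $1 \in A$).

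Then the theorem drops out from a two-sided containment. For ``$\supseteq$'', given $a = f'(y)b$ with $b \in \mathcal{C}_{B/R}$, property (ii) gives $aB = f'(y)\,bB \subseteq f'(y)\,\mathcal{C}_{B/R}$, which by (i) is contained in $f'(y)\,\mathcal{C}_{A/R} = A$ by Euler's formula; hence $aB \subseteq A$, i.e.\ $a \in \mathfrak{f}(\overline{R}/R[y])$. For ``$\subseteq$'', if $aB \subseteq A$ then $f'(y)^{-1}aB \subseteq f'(y)^{-1}A = \mathcal{C}_{A/R}$, so by (ii)
\[
\mathrm{Tr}_{K'/K}\!\big( f'(y)^{-1}a\,B \big) \subseteq \mathrm{Tr}_{K'/K}(\mathcal{C}_{A/R}) \subseteq R ,
\]
which says exactly that $f'(y)^{-1}a \in \mathcal{C}_{B/R}$, i.e.\ $a \in f'(y)\,\mathcal{C}_{\overline{R}/R}$. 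Combining the two inclusions gives $\mathfrak{f}(\overline{R}/R[y]) = f'(y)\,\mathcal{C}_{\overline{R}/R}$.

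The only real obstacle is Euler's formula $\mathcal{C}_{R[y]/R} = f'(y)^{-1}R[y]$, for which we invoke \cite[Ch.~V, \S~11, Theorem~29]{ZS}; the rest is bookkeeping with the definition. It is worth emphasising, with a view to the positive-characteristic applications in later sections, that this argument never uses $\mathrm{char}(K) = 0$: the only point where the characteristic could intervene is in ensuring $f'(y) \neq 0$, which is guaranteed by the separability of $K'/K$. The Dedekind hypothesis on $R$ enters only through \cite[Ch.~V, \S~11, Theorem~29]{ZS} (and it also makes $\mathfrak{f}(\overline{R}/R[y])$ and $\mathcal{C}_{\overline{R}/R}$ genuine fractional ideals, though that is not needed for the displayed equality).
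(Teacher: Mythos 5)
Your proof is correct: the paper states this result without proof, citing only \cite[Ch.~V, \S~11, Theorem~29]{ZS}, and your argument is precisely the standard deduction from Euler's formula $\mathcal{C}_{R[y]/R}=f'(y)^{-1}R[y]$ together with the trace-theoretic definition of the complementary module (the two-sided containment via the inclusion-reversing property and the $\overline{R}$-module structure of $\mathcal{C}_{\overline{R}/R}$ is carried out correctly). Your closing observation that only integral closedness of $R$ and separability of $K'/K$ are actually used is accurate and pertinent to the paper's positive-characteristic setting.
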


Note that we can always take an element $y$ as in Theorem \ref{cor:Mnuk}, whihc is proved in a way similar to the first part of the proof of Lemma \ref{lem:PID} below.

In the following, we show that both $\overline{R}$ and $\mathcal{C}_{\overline{R}/R}$ in Theorem \ref{cor:Mnuk} are finite free $R$-modules of same rank if $R$ is a principal ideal domain, and that a basis of $\mathcal{C}_{\overline{R}/R}$ can be computed from that of $\overline{R}$. 
For this, we here recall the notion of dual bases in a finite separable field extension.
Proofs of the following lemmas would be well-known, but let us write all the complete proofs for readers' convenience:

\begin{lemma}\label{lem:dual_basis}
Let $K'/K$ be a finite separable field extension, and $\{ w_1, \ldots , w_N \}$ a basis of $K'$ over $K$.
Put $w_i^{\ast} := \sum_{j=1}^N c_{i,j} w_j$ with $c_{i,j} \in K$ for $1 \leq i \leq N$.
Then we have the following:
\begin{enumerate}
\item $(  \mathrm{Tr}_{K'/K}(w_i w_j) ) \in \mathrm{GL}_N(K)$.
\item By putting $(c_{i,j}) = (  \mathrm{Tr}_{K'/K}(w_i w_j) )^{-1}$, the basis $\{ w_1^{\ast}, \ldots , w_N^{\ast} \}$ of $K'$ over $K$ satisfies $\mathrm{Tr}_{K'/K}(w_i w_j^{\ast}) = \delta_{i,j}$ for each $1 \leq i,j \leq N$, where $\delta_{i,j}$ is the Kronecker delta.
Conversely, if $\mathrm{Tr}_{K'/K}(w_i w_j^{\ast}) = \delta_{i,j}$ for each $1 \leq i,j \leq N$, then $(c_{i,j}) = (  \mathrm{Tr}_{K'/K}(w_i w_j) )^{-1}$.
In this case, $\{ w_1^{\ast}, \ldots , w_N^{\ast} \}$ is the dual basis of $\{ w_1, \ldots , w_N \}$.
\end{enumerate}
\end{lemma}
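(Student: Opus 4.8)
The plan is to establish the non-degeneracy of the trace form first, and then read off part (2) as a routine linear-algebra consequence. For part (1), I would argue that the bilinear form $(u,v) \mapsto \mathrm{Tr}_{K'/K}(uv)$ on $K'$ is non-degenerate because $K'/K$ is separable. Concretely, since $K'/K$ is finite separable it is simple, say $K' = K(\theta)$ with $\theta$ of degree $N$; one then uses the standard fact that the trace form on the power basis $\{1,\theta,\ldots,\theta^{N-1}\}$ has determinant equal to the discriminant of the minimal polynomial of $\theta$, which is nonzero precisely because the minimal polynomial is separable (its roots in an algebraic closure are distinct). Alternatively, and perhaps cleaner: if some nonzero $u \in K'$ satisfied $\mathrm{Tr}_{K'/K}(uv)=0$ for all $v$, then taking $v = u^{-1}w$ gives $\mathrm{Tr}_{K'/K}(w)=0$ for all $w \in K'$, contradicting the fact that the trace map of a finite separable extension is surjective (equivalently, not identically zero). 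Either way, the Gram matrix $(\mathrm{Tr}_{K'/K}(w_iw_j))$ of the trace form with respect to the $K$-basis $\{w_1,\ldots,w_N\}$ is invertible, i.e.\ lies in $\mathrm{GL}_N(K)$.

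For part (2), set $T = (\mathrm{Tr}_{K'/K}(w_iw_j)) \in \mathrm{GL}_N(K)$, which makes sense by part (1), and define $(c_{i,j}) = T^{-1}$, so $w_i^{\ast} = \sum_j c_{i,j} w_j$. Then I would compute directly
\[
\mathrm{Tr}_{K'/K}(w_i w_j^{\ast}) = \sum_{\ell} c_{j,\ell}\,\mathrm{Tr}_{K'/K}(w_i w_{\ell}) = \sum_{\ell} T_{i,\ell}\, (T^{-1})_{j,\ell} = (T \cdot (T^{-1})^{\top})^{\top}_{\text{--}},
\]
and since $T$ is symmetric, $(T^{-1})^{\top} = T^{-1}$, so this equals the $(i,j)$-entry of $T T^{-1} = I_N$, namely $\delta_{i,j}$. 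For the converse direction, suppose $\mathrm{Tr}_{K'/K}(w_i w_j^{\ast}) = \delta_{i,j}$ for all $i,j$; expanding $w_j^{\ast} = \sum_\ell c_{j,\ell} w_\ell$ gives $\sum_\ell T_{i,\ell} c_{j,\ell} = \delta_{i,j}$, i.e.\ $T (c_{i,j})^{\top} = I_N$ (again using symmetry of $T$), hence $(c_{i,j}) = T^{-1}$. Finally, that $\{w_1^{\ast},\ldots,w_N^{\ast}\}$ is a $K$-basis follows because the change-of-basis matrix $(c_{i,j}) = T^{-1}$ is invertible; calling it the dual basis of $\{w_1,\ldots,w_N\}$ is then just the definition, given the relation $\mathrm{Tr}_{K'/K}(w_i w_j^{\ast}) = \delta_{i,j}$.

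The only genuine content is part (1) — the non-degeneracy of the trace form — and its proof rests entirely on separability. I expect this to be the main (and essentially the only) obstacle, though it is a classical fact; everything after that is symmetric-matrix bookkeeping. I would be careful to state explicitly where separability enters (surjectivity/non-vanishing of the trace, or equivalently non-vanishing of the discriminant), since the paper later needs to contrast the separable setting with the inseparable one, and to note that the symmetry of $T$ is what makes the transpose manipulations in part (2) go through cleanly.
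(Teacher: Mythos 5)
Your proposal is correct. Part (2) is handled exactly as in the paper: expand $w_j^{\ast}$ by $K$-linearity of the trace to get the matrix identity $(c_{i,j})\cdot(\mathrm{Tr}_{K'/K}(w_iw_j))=(\mathrm{Tr}_{K'/K}(w_i^{\ast}w_j))$, and read off both directions; your explicit appeal to the symmetry of $T$ just makes precise a step the paper leaves implicit (it computes $\mathrm{Tr}(w_i^{\ast}w_j)$ rather than $\mathrm{Tr}(w_iw_j^{\ast})$, and commutativity of multiplication in $K'$ identifies the two matrices up to transpose). The genuine difference is in part (1). The paper factors the Gram matrix as ${}^tHH$ with $H=(\sigma_i(w_j))$, where the $\sigma_i$ are the $K$-embeddings of $K'$ into $\overline{K}$, and derives a contradiction with Dedekind's lemma on linear independence of characters if $\det H=0$. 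You instead establish non-degeneracy of the trace form abstractly, either via the discriminant of the minimal polynomial of a primitive element or — more cleanly — by noting that a nonzero radical vector $u$ would force $\mathrm{Tr}_{K'/K}\equiv 0$ (substitute $v=u^{-1}w$), contradicting the non-vanishing of the trace for separable extensions. The two routes are cousins: the standard proof that the trace is nonzero for separable extensions is itself an application of linear independence of characters, so separability enters at the same place; but your version avoids manipulating the matrix $H$ over $\overline{K}$ (and in particular sidesteps the paper's slightly loose claim that $\det H=0$ gives a dependence of the rows of $H$ \emph{over $K$}, when a priori the dependence is only over $\overline{K}$ — harmless, since Dedekind's lemma rules out dependence over the larger field too, but your formulation does not raise the issue at all). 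The only cosmetic flaw is the garbled final expression in your displayed computation for part (2); the surrounding prose makes the intended identity $(T(T^{-1})^{\top})_{i,j}=\delta_{i,j}$ clear.
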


\begin{proof}
(1) Let $\sigma_1, \ldots , \sigma_N$ be all $K$-embeddings from $K'$ to $\overline{K}$.
Putting $H = (\sigma_i(w_j))$, one can check that $( \mathrm{Tr}_{K'/K} (w_i w_j ) )_{i,j} = {}^t H H$, where ${}^t H$ is the transpose of $H$.
Thus, if $\mathrm{det}(  \mathrm{Tr}_{K'/K}(w_i w_j) )  = 0$, i.e., $\mathrm{det}(H) = 0$, then the row vectors of $H$ are linearly dependent over $K$, and hence there exists $(c_1, \ldots , c_N) \in K^N \smallsetminus \{ (0, \ldots , 0) \}$ such that $\sum_{j=1}^N c_j \sigma_j (w_i) = 0$.
Since $\{ w_1, \ldots , w_N \}$ generates $K'$ over $K$, we also have $\sum_{j=1}^N c_j \sigma_j (y) = 0$ for all $y \in K'$.
This means that $\sigma_1, \ldots , \sigma_N$ are linearly dependent over $K$, which contradicts Dedekind's lemma.

(2) It follows from the $K$-linearity of $\mathrm{Tr}_{K'/K}$ that
\[
\sum_{k=1}^N c_{i,k} \mathrm{Tr}_{K'/K} (w_k w_j ) =  \mathrm{Tr}_{K'/K} \left( \left(\sum_{k=1}^N c_{i,k} w_k \right) w_j \right)  = \mathrm{Tr}_{K'/K} ( w_i^{\ast} w_j) ,
\]
i.e., $(c_{ij}) \cdot (  \mathrm{Tr}_{K'/K}(w_i w_j) ) = (\mathrm{Tr}_{K'/K} ( w_i^{\ast} w_j))$, and thus the claim holds.
\end{proof}

\begin{lemma}\label{lem:dual_basis2}
Let the notation be same as in Definition \ref{def:comp_mod}, and suppose that $\tilde{R}$ is a free $R$-module of finite rank with a basis $\{ w_1, \ldots , w_N \}$ with $N:=[K':K]$.
Then we have the following:
\begin{enumerate}
\item $\{ w_1, \ldots , w_N \}$ is a basis of $K'$ over $K$.
\item The dual basis $\{ w_1^{\ast}, \ldots , w_N^{\ast} \}$ given in Lemma \ref{lem:dual_basis} generates $\mathcal{C}_{\tilde{R}/R}$ over $R$.
Hence $\mathcal{C}_{\tilde{R}/R}$ is a free $R$-module of rank $N$ with a basis $\mathcal{C}_{\tilde{R}/R}$.
\end{enumerate}
\end{lemma}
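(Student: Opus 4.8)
The plan is to derive both parts from the non-degeneracy of the trace form together with the characterisation of the dual basis in Lemma \ref{lem:dual_basis}. For part (1), I would first check that $\{w_1,\dots,w_N\}$ is linearly independent over $K$: from a relation $\sum_i c_i w_i = 0$ with $c_i \in K$, one clears denominators by picking $0 \ne d \in R$ with $d c_i \in R$ for all $i$, so that $\sum_i (d c_i) w_i = 0$ inside $\tilde{R}$; since $\{w_i\}$ is an $R$-basis of $\tilde{R}$, each $d c_i = 0$, hence each $c_i = 0$. As $[K':K]=N$ by hypothesis, these $N$ independent elements span $K'$, proving (1). In particular the dual basis $\{w_1^\ast,\dots,w_N^\ast\}$ of Lemma \ref{lem:dual_basis} is well-defined, and being the image of $\{w_i\}$ under the invertible matrix $(c_{i,j}) = (\mathrm{Tr}_{K'/K}(w_i w_j))^{-1}$, it is again a $K$-basis of $K'$.

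For part (2) I would establish the equality $\mathcal{C}_{\tilde{R}/R} = \sum_{i=1}^N R w_i^\ast$ by proving both inclusions. First, each $w_i^\ast$ lies in $\mathcal{C}_{\tilde{R}/R}$: since $\tilde{R} = \sum_j R w_j$ and the trace is $R$-linear, it suffices that $\mathrm{Tr}_{K'/K}(w_i^\ast w_j) \in R$ for all $j$, and this equals $\delta_{i,j}$ by Lemma \ref{lem:dual_basis}(2) together with the symmetry $\mathrm{Tr}_{K'/K}(ab) = \mathrm{Tr}_{K'/K}(ba)$. Conversely, given $z \in \mathcal{C}_{\tilde{R}/R}$, write $z = \sum_i a_i w_i^\ast$ with $a_i \in K$ using part (1), and pair against $w_j \in \tilde{R}$: by $R$-linearity and Lemma \ref{lem:dual_basis}(2), $\mathrm{Tr}_{K'/K}(z w_j) = \sum_i a_i \mathrm{Tr}_{K'/K}(w_i^\ast w_j) = a_j$, while the left-hand side lies in $R$ since $z \in \mathcal{C}_{\tilde{R}/R}$ and $w_j \in \tilde{R}$; hence every $a_j \in R$ and $z \in \sum_i R w_i^\ast$. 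Finally $\{w_1^\ast,\dots,w_N^\ast\}$, being $K$-linearly independent, is a fortiori $R$-linearly independent, so it is a free $R$-basis of $\mathcal{C}_{\tilde{R}/R}$, which thus has rank $N$.

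There is no real obstacle here; the only steps needing a little care are the denominator-clearing argument (which uses that $R$ is a domain and $\tilde{R}$ is $R$-torsion-free, automatic since $\tilde{R}$ is $R$-free) and matching the index conventions of Lemma \ref{lem:dual_basis}, whose identity $\mathrm{Tr}_{K'/K}(w_i w_j^\ast) = \delta_{i,j}$ puts the starred index on the second factor, so that trace symmetry is needed to transfer it to the first. The rest is a straightforward unwinding of the definition of the complementary module.
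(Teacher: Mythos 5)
Your proof is correct and follows essentially the same route as the paper: clear denominators against the $R$-basis to get $K$-linear independence and hence a $K$-basis by the degree count, then verify $w_i^\ast \in \mathcal{C}_{\tilde{R}/R}$ via $\mathrm{Tr}_{K'/K}(w_i^\ast w_j)=\delta_{i,j}$ and recover the coefficients of any $z \in \mathcal{C}_{\tilde{R}/R}$ as $a_j = \mathrm{Tr}_{K'/K}(z w_j) \in R$. Your added closing remark that the $w_i^\ast$ are $R$-linearly independent (being $K$-linearly independent) is a harmless and correct completion of the freeness claim.
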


\begin{proof}
(1) Assuming $\sum_{i=1}^N b_i w_i = 0$ for $b_i \in K$, we first prove the $K$-linear independence of $\{ w_1, \ldots , w_N \}$.
Since $K$ is the field of fractions for $R$, we can write $b_i = p_i/q_i$ for some $p_i, q_i \in R$ with $q_i \neq 0$, and thus $\sum_{i=1}^N (p_i/q_i) w_i = 0$.
Multiplying the product $q_1 \cdots q_N$ to the both sides, we have $\sum_{i=1}^N \left( p_i \prod_{i \neq k} q_k \right) w_i = 0$.
It follows from the $R$-linear independence of $\{ w_1, \ldots , w_N \}$ that $ p_i \prod_{i \neq k} q_k  = 0$ for all $1 \leq i \leq N$. 
Since $q_k \neq 0$ for $1 \leq k \leq n$, one has $p_i = 0$ and thus $b_i = 0$.
Hence $\{ w_1, \ldots , w_N \}$ is linearly independent over $K$, and thus it follows from $[K':K]=N$ that $\{ w_1, \ldots , w_N \}$ spans $K'$ over $K$.

(2) For any element $r'=\sum_{i=1} b_i w_i \in \tilde{R}$ ($b_i \in R$), we have
\[
\mathrm{Tr}_{K'/K}(w_j^{\ast} r') = \sum_{i=1}^N b_i  \mathrm{Tr}_{K'/K} ( w_i w_j^{\ast} )
= \sum_{i=1}^N b_i  \delta_{j,i} = b_i \in R,
\]
and thus $w_j^{\ast} \in \mathcal{C}_{\tilde{R}/R}$ for each $1 \leq j \leq N$.
Next, any element $z' \in \mathcal{C}_{\tilde{R}/R} \subset K'$ is written as $z' = \sum_{j=1}^N a_{j} w_j^{\ast} \in K'$ ($a_j \in K$) since $\{ w_1^{\ast}, \ldots , w_N^{\ast} \}$ is a basis of $K'$ over $K$ by (1) together with Lemma \ref{lem:dual_basis} (2).
It suffices to show that we can take $a_j \in R$ for all $1 \leq j \leq N$.
For each $1 \leq i \leq N$, we have
\[
\mathrm{Tr}_{K'/K}(z' w_i) = \sum_{j=1}^N a_j  \mathrm{Tr}_{K'/K} ( w_i w_j^{\ast} )
= \sum_{j=1}^N a_j  \delta_{j,i} = a_i,
\]
and thus it follows from $z' \in \mathcal{C}_{\tilde{R}/R}$ and $w_i \in \tilde{R}$ that $a_i=\mathrm{Tr}_{K'/K}(z' w_i) \in R$, as desired.
\end{proof}

\begin{lemma}\label{lem:PID}
Let $R$ be a principal ideal domain (and thus integrally closed) and $K$ its field of fractions.
Let $K'$ be a finite separable extension of degree $N$ of $K$, and ${\overline R}$ the integral closure of $R$ in $K'$.
Then ${\overline R}$ is a free $R$-module of rank $N$.
\end{lemma}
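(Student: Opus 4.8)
The plan is to sandwich $\overline{R}$ between two free $R$-submodules of rank $N$ inside the $N$-dimensional $K$-vector space $K'$, and then invoke the structure theory of modules over a principal ideal domain.

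First I would produce a $K$-basis of $K'$ consisting of elements integral over $R$. Starting from any $K$-basis $\{v_1,\dots,v_N\}$ of $K'$, each $v_i$ satisfies a monic polynomial over $K$; clearing denominators of that polynomial (i.e.\ multiplying $v_i$ by a suitable nonzero $d_i\in R$) produces $y_i:=d_iv_i\in\overline{R}$. Since the $d_i$ are units in $K$, the set $\{y_1,\dots,y_N\}$ is still a $K$-basis of $K'$, so $\bigoplus_{i=1}^N Ry_i\subseteq\overline{R}$, a free $R$-submodule of rank $N$.

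Next I would bound $\overline{R}$ from above by means of the trace form. Applying Lemma \ref{lem:dual_basis} to the basis $\{y_1,\dots,y_N\}$ gives a dual basis $\{y_1^{\ast},\dots,y_N^{\ast}\}$ of $K'$ over $K$ with $\mathrm{Tr}_{K'/K}(y_iy_j^{\ast})=\delta_{i,j}$. For $z\in\overline{R}$, write $z=\sum_{j=1}^N a_jy_j^{\ast}$ with $a_j\in K$; then $a_i=\mathrm{Tr}_{K'/K}(zy_i)$ by the defining property of the dual basis. Since $z$ and $y_i$ both lie in $\overline{R}$, their product $zy_i$ is integral over $R$, and every $K$-conjugate of $zy_i$ is again integral over $R$; hence $\mathrm{Tr}_{K'/K}(zy_i)$ is an element of $K$ integral over $R$, and therefore lies in $R$ because $R$ is integrally closed in $K$ (being a PID). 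Thus $a_i\in R$ for all $i$, i.e.\ $\overline{R}\subseteq\bigoplus_{i=1}^N Ry_i^{\ast}$.

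Finally I would conclude: $\overline{R}$ is an $R$-submodule of the free module $\bigoplus_{i=1}^N Ry_i^{\ast}$ of rank $N$ over the PID $R$, hence itself free of rank at most $N$; and it contains the free submodule $\bigoplus_{i=1}^N Ry_i$ of rank $N$, so its rank is exactly $N$. Therefore $\overline{R}$ is a free $R$-module of rank $N$. The only point requiring care is the claim that the trace of an element integral over $R$ lies back in $R$; this is precisely where integral closedness of $R$ is used, and all remaining steps are routine linear algebra together with the standard fact that a submodule of a finite-rank free module over a PID is free.
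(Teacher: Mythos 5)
Your proof is correct and follows essentially the same route as the paper's: sandwich $\overline{R}$ between the free module on an integral $K$-basis and the free module on its trace-dual basis, then apply the structure theory of modules over a PID. The paper packages the upper bound as the containment $\overline{R}\subset\mathcal{C}_{\tilde{R}/R}$ via Lemma \ref{lem:dual_basis2}, while you unwind that containment explicitly (including the key point that traces of integral elements lie in $R$ because $R$ is integrally closed), but the argument is the same.
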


\begin{proof}
Let $\{ w_1, \ldots , w_{N} \}$ be a basis of $K'$ over $K$.
We may assume $w_i \in {\overline R}$ for all $1 \leq i \leq N$.
Indeed, let $w \in K'$, and $w^{m} + (p_{m-1}/q_{m-1}) w^{m-1} + \cdots + (p_1/q_1) w + p_0/q_0$ its algebraic relation over $K$, where $p_i , q_i \in R$ with $q_i \neq 0$ for $0 \leq i \leq m-1$.
Then, multiplying $a^m$ with $a:=q_0 \cdots q_{m-1} \in R$ to the both sides, we have that $a w$ is integral over $R$, and thus $a w \in {\overline R}$.
Thus, there exists $a_i \in R \smallsetminus \{ 0 \}$ with $a_i w_i \in {\overline R}$ for each $1 \leq i \leq N$, and clearly $\{ a_1 w_1, \ldots , a_{N}w_{N} \}$ is also a basis of $K'$ over $K$.

Let ${\tilde R} \subset {\overline R}$ be the free $R$-module spanned by $\{ w_1, \ldots , w_{N} \}$.
Note that ${\tilde R}$ is integral over $R$, and that ${\tilde R} \subset {\overline R} \subset \mathcal{C}_{{\overline R}/R} \subset \mathcal{C}_{{\tilde R}/R}$ in $K'$.
By Lemma \ref{lem:dual_basis2} (2), $\mathcal{C}_{{\tilde R}/R}$ is a free $R$-module of rank $N$.
Since $R$ is a principal ideal domain, both ${\overline R}$ and $\mathcal{C}_{{\overline R}/R}$ are also free $R$-modules, and have rank $N$.
\end{proof}

\section{Computing syzygies and normalization}\label{sec:normalization}

In this section, we recall the notion of Gr\"{o}bner bases of free modules briefly, and review methods to compute syzygies of polynomials and the normalization of an affine ring.

\subsection{Gr\"{o}bner bases of free modules}\label{subsec:Groebner}
Let $S = k[x_1, \ldots , x_n]$ be the polynomial ring of $n$ variables over a field $k$.
For readers' convenience, we start with recalling the notion of Gr\"{o}bner bases in the polynomial ring $S$, and its extension to free $S$-modules.

A {\it monomial (or term) order} on $S$ is a total order $\prec$ on the set of monomials $\mathrm{Mon}(S) := \{ x_1^{\alpha_1} \cdots x_n^{\alpha_n} : ( \alpha_1, \ldots , \alpha_n ) \in (\mathbb{Z}_{\geq 0})^n \}$ such that (1) $m_1 \preceq m_2$ for $m_1,m_2 \in {\rm Mon}(S)$ implies $m_1 m_3 \preceq m_2 m_3 $ for all $m_3 \in {\rm Mon}(S)$, and (2) $1 \preceq m$ for all $m \in {\rm Mon}(S)$.
For $f \in S \smallsetminus \{ 0 \}$, we denote by $\mathrm{LT}_{\prec}(f)$, $\mathrm{LM}_{\prec}(f)$ and $\mathrm{LC}_{\prec}(f)$ the leading term, the leading monomial, and the leading coefficient of $f$ with respect to $\prec$, respectively.
Note that $\mathrm{LT}_{\prec}(f) = \mathrm{LC}_{\prec}(f) \cdot \mathrm{LM}_{\prec}(f)$.
For a subset $F \subset S$, we set $\mathrm{LT}_{\prec}(F) := \{\mathrm{LT}_{\prec}(f) : f  \in F \}$ and $\mathrm{LM}_{\prec}(F) := \{ \mathrm{LM}_{\prec}(f) : f \in F \}$.
A finite subset $G \subset S$ is called a {\it Gr\"{o}bner basis} for an ideal $I \subset S$ with respect to $\prec$ if it generates $I$ and if $ \langle \mathrm{LT}_{\prec}(G) \rangle_S = \langle \mathrm{LT}_{\prec}(I) \rangle_S$.
For simplicity, we denote $\mathrm{LT}_{\prec}$ as $\mathrm{LT}$ and so on, if $\prec$ is clear from the context.
It is well-known that every non-zero ideal of $S$ has a Gr\"{o}bner basis with respect to an arbitrary monomial order, and so far various algorithms for computing Gr\"{o}bner bases have been proposed, e.g., Buchberger's algorithm~\cite{Buchberger}, Faug\`{e}re's $F_4$~\cite{F4} and $F_5$~\cite{F5}, and so on.

The notion of monomial orders and Gr\"{o}bner bases is extended to free modules $S^r = \bigoplus_{i=1}^r S e_i$, where $e_i = (0, \ldots , 0 , 1 , 0 , \ldots, 0)$ denotes the $i$-th standard basis vector of $S^r$ with $1$ at the $i$-th coordinate and $0$'s elsewhere.
A monomial in $S^r$ is an element of the form $m e_i = (0, \ldots , 0 , m, 0, \ldots , 0) \in S^r$ with $m \in \mathrm{Mon}(S)$, and the set of all monomials in $S^r$ is denoted by $\mathrm{Mon}(S^r)$.
For two monomials $m e_i$ and $m' e_j$ with $m,m' \in \mathrm{Mon}(S)$, we say that $m e_i$ is {\it divisible} by $m' e_j$ if $i=j$ and $m' \mid m$.
A (module) monomial order on $S^r$ is defined as a total order $\prec$ (we use the same notation as in the case of polynomial rings) on $\mathrm{Mon}(S^r)$ such that (1)' $m_1 \preceq m_2$ for $m_1,m_2 \in {\rm Mon}(S^r)$ implies $m m_1 \preceq m m_2 $ for all $m \in {\rm Mon}(S)$ (!), and (2)' $e_i$ is the minimum element in the set of all monomials in $S e_i$, namely $e_i \preceq m e_i$ for all $m \in {\rm Mon}(S)$.
For an element $f \in S^r$, its leading term, leading monomial, and leading coefficient with respect to $\prec$ are defined similarly to the case of an element in $S$, and the same notations $\mathrm{LT}_{\prec}(f)$, $\mathrm{LM}_{\prec}(f)$ and $\mathrm{LC}_{\prec}(f)$ are used.
As for ideals, we also define a Gr\"{o}bner basis for a {\it submodule} $N \subset S^r$:
A finite subset $G \subset N$ is called a Gr\"{o}bner basis for $N$ with respect to $\prec$ if $ \langle \mathrm{LT}_{\prec}(G) \rangle_S = \langle \mathrm{LT}_{\prec}(I) \rangle_S$, equivalently, for every $f \in I$, there exists $g \in G$ such that $\mathrm{LT}_{\prec}(f)$ is divisible by $\mathrm{LT}_{\prec}(g)$, where we denote by $\langle F \rangle_S$ the $S$-submodule of $S^r$ generated by a subset $F \subset S^r$.
A Gr\"{o}bner basis for a submodule $N \subset S^r$ with respect to an arbitrary module monomial order always exists as in the case of ideals, and it can be computed by extensions of algorithms such as \cite{Buchberger}, \cite{F4} and \cite{F5} over polynomial rings to free modules.
In fact, implementations of such extended algorithms are found in e.g., Macaulay2, Magma and Singular.


In the computation of Gr\"{o}bner bases in free modules, there are two typical monomial orders: {\it POT} and {\it TOP}, where POT (resp.\ TOP) stands for ``position-over-term'' (resp.\ ``term-over-position'').
These orders will be used in the syzygy computation of our main algorithm in Section \ref{sec:main} below.
The definition of POT and TOP orders is as follows:
For a monomial order $\succ$ on $\mathrm{Mon}(S)$, we say
\begin{itemize}
\item $m e_i \succ_{\rm POT} m' e_j$ with $m, m' \in \mathrm{Mon}(S)$ if $i < j$, or if $i=j$ and $m \succ m'$.
\item $m e_i \succ_{\rm TOP} m' e_j$ with $m, m' \in \mathrm{Mon}(S)$ if $m \succ m'$, or if $m = m'$ and $i<j$.
\end{itemize}
These two relations are exactly monomial orders on $\mathrm{Mon} (S^r)$, and $\succ_{\rm POT}$ (resp.\ $\succ_{\rm TOP}$) is called a POT (resp.\ TOP) extension of $\succ$.
Note that we have $e_1 \succ \cdots \succ e_r$ in both two cases, but other orders such as $e_1 \prec \cdots \prec e_r$ are also possible;
in such a case, we specify the order of $e_i$'s. 

We will also use a {\it product} (or {\it block}) order for $S^r$, like an elimination order for polynomial rings:
While an elimination order for $S$ divides the variables $x_1, \ldots , x_n$ into two (or more) blocks, this order divides the monomials $e_1, \ldots , e_r$ into two (or more) blocks.
For two module monomial orders $\succ_1$ on $\bigoplus_{i=1}^k S e_i$ and $\succ_2$ on $\bigoplus_{i=k+1}^r S e_i$, their product order $\succ_{1,2} \; = (\succ_1, \succ_2)$ on $S^r = \bigoplus_{i=1}^r S e_i$ is defined as follows:
For two monomials $m e_i$ and $m' e_j$ with $m,m' \in \mathrm{Mon}(S)$, we say $m e_i \succ_{1,2} m' e_j$ if $i, j \leq k$ and $m e_i \succ_1 m' e_j$, $i \leq k$ and $k < j$, or $k < i, j$ and $m e_i \succ_2 m' e_j$.
The order $\succ_{1,2}$ has the following {\it elimination property} with respect to $e_1, \ldots , e_k$:
$f \in S^r$ with $\mathrm{LT}_{\succ_{1,2}}(f) \in \bigoplus_{i=k+1}^r S e_i$ satisfies $f \in \bigoplus_{i=k+1}^r S e_i$.
The definition of product order is clearly extended to arbitrary number of blocks, and the POT order is a particular case of product orders with the elimination property with respect to $e_1, \ldots , e_k$ for each $1 \leq k \leq r-1$.

\subsection{Computing syzygy modules}\label{subsec:syzygies}

This subsection reviews methods for computing syzygy modules.
We use the same notation as in the previous subsection.

Let $f_1, \ldots , f_r \in S = k[x_1, \ldots , x_n]$.
Consider the following $S$-submodule of $S^r$:
\begin{equation}\label{eq:syzygy}
\mathrm{syz}(f_1, \ldots , f_r) := \left\{ (a_1, \ldots , a_r) \in S^r = \bigoplus_{i=1}^r S e_i : \sum_{i=1}^r a_i f_i = 0 \right\},
\end{equation}
which is nothing but the kernel of the $S$-homomorphism $S^r \rightarrow S$ defined by $\ e_i \mapsto f_i$ for $1 \leq i \leq r$.
We call $\mathrm{syz}(f_1, \ldots , f_r )$ the {\it module of syzygies} (or {\it syzygy module}) of $(f_1, \ldots , f_r)$, and each element in $\mathrm{syz}(f_1, \ldots , f_r)$ a {\it syzygy} of $(f_1, \ldots , f_r)$.
Note that $\mathrm{syz}(f_1, \ldots , f_r)$ is not the zero module.
Indeed, if $f_i = 0$ for some $1 \leq i \leq r$ then $e_i \in \mathrm{syz} (f_1, \ldots , f_r)$, and if $f_i \neq 0$ for all $1 \leq i \leq r$ then $\mathrm{syz}(f_1, \ldots , f_r)$ contains a non-zero element such as $(f_r, 0 , \ldots , 0, -f_1)$.

The following lemma shows that given $f_1, \ldots, f_r$, a Gr\"{o}bner basis of $\mathrm{syz}(f_1, \ldots , f_r)$ can be obtained by computing a Gr\"{o}bner basis for a submodule in $S^{r+1}$:

\begin{lemma}\label{lem:ComputeSyzygy}
Let $f_1, \ldots , f_r \in S $, and let $F = \langle f_1 e_0 + e_1, \ldots , f_r e_0 + e_r \rangle_S$ be the submodule of $S^{r+1} = \bigoplus_{i=0}^r Se_i$ generated by
\begin{eqnarray}
f_1 e_0 + e_1 &=& (f_1, 1, 0 , 0, \ldots , 0, 0), \nonumber \\
f_2 e_0 + e_2 &=& (f_2, 0, 1 , 0, \ldots , 0, 0), \nonumber \\
&\vdots & \nonumber \\
f_r e_0 + e_r &=& (f_r, 0, 0 , 0, \ldots , 0, 1), \nonumber 
\end{eqnarray}
where $\{ e_0, e_1, \ldots , e_r \}$ is the standard basis of $S^{r+1}$.
Let $G$ be a Gr\"{o}bner basis of $F$ with respect to a monomial order $\succ$ with the elimination property with respect to $e_0$ (for example, a POT order with $e_0 \succ e_i$ for all $1 \leq i \leq r$).
Then, $G$ contains an element of the form $(0, g_1, \ldots , g_r ) \in S^{r+1}$ with $(g_1, \ldots , g_r)\neq  (0, \ldots , 0 ) $.

Moreover, if $G$ is a Gr\"{o}bner basis of $F$ with respect to $\succ$, then the set 
\begin{eqnarray}\label{eq:syzGB}
\{ (g_1, \ldots , g_r) : (0, g_1, \ldots , g_r ) \in G \} \subset \mathrm{syz}(f_1, \ldots , f_r)
\end{eqnarray}
is also a Gr\"{o}bner basis of $\mathrm{syz}(f_1, \ldots , f_r) \subset S^r = \bigoplus_{i=1}^r S e_i $ with respect to the monomial order on $S^r$ induced from the restriction of $\succ$ on $S^{r+1}$.
In particular, the set \eqref{eq:syzGB} generates $\mathrm{syz}(f_1, \ldots , f_r)$.
\end{lemma}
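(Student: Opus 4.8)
The plan is to realize $\mathrm{syz}(f_1,\ldots,f_r)$ as the intersection of $F$ with the coordinate submodule $M_0 := \bigoplus_{i=1}^r Se_i \subset S^{r+1}$, and then to deduce the statement from the standard elimination principle for Gr\"obner bases, using that $\succ$ has the elimination property with respect to $e_0$.

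First I would record the elementary description of $F\cap M_0$. A general element of $F$ has the form $\sum_{i=1}^r a_i(f_ie_0+e_i) = \bigl(\sum_{i=1}^r a_if_i\bigr)e_0 + \sum_{i=1}^r a_ie_i$ with $a_i\in S$, and its $e_0$-component vanishes precisely when $(a_1,\ldots,a_r)\in\mathrm{syz}(f_1,\ldots,f_r)$. Hence
\[
F\cap M_0 = \bigl\{ (0,g_1,\ldots,g_r) : (g_1,\ldots,g_r)\in\mathrm{syz}(f_1,\ldots,f_r) \bigr\},
\]
which, under the identification $M_0\cong S^r$ matching $e_i$ with $e_i$, is exactly $\mathrm{syz}(f_1,\ldots,f_r)\subset S^r$. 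Since $\mathrm{syz}(f_1,\ldots,f_r)\ne 0$ (as already noted above), in particular $F\cap M_0\ne 0$.

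Next I would prove the elimination step: if $G$ is a Gr\"obner basis of $F$ with respect to $\succ$, then $G\cap M_0$ is a Gr\"obner basis of $F\cap M_0$ with respect to the restriction $\succ|_{M_0}$ (which is routinely checked to be a module monomial order on $M_0\cong S^r$ in the sense of Subsection~\ref{subsec:Groebner}). The inclusion $\langle\mathrm{LT}_{\succ}(G\cap M_0)\rangle_S\subseteq\langle\mathrm{LT}_{\succ}(F\cap M_0)\rangle_S$ is immediate from $G\cap M_0\subseteq F\cap M_0$. For the converse, let $0\ne h\in F\cap M_0$; since $G$ is a Gr\"obner basis of $F$, some $g\in G$ has $\mathrm{LT}_{\succ}(g)$ dividing $\mathrm{LT}_{\succ}(h)$. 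Now $\mathrm{LT}_{\succ}(h)$ is a scalar times a monomial $me_i$ with $i\ge1$, and divisibility of monomials in $S^{r+1}$ forces $\mathrm{LT}_{\succ}(g)$ to be a scalar times $m'e_i$ with $m'\mid m$ and the same index $i\ge1$; thus $\mathrm{LT}_{\succ}(g)\in M_0$, and the elimination property of $\succ$ with respect to $e_0$ (the block decomposition $\{e_0\}\cup\{e_1,\ldots,e_r\}$) gives $g\in M_0$, hence $g\in G\cap M_0$ and $\mathrm{LT}_{\succ|_{M_0}}(g)$ divides $\mathrm{LT}_{\succ|_{M_0}}(h)$. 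This gives $\langle\mathrm{LT}_{\succ}(F\cap M_0)\rangle_S\subseteq\langle\mathrm{LT}_{\succ}(G\cap M_0)\rangle_S$; by the standard criterion, a finite subset of a submodule whose leading terms generate the leading-term module of that submodule is a Gr\"obner basis of it and in particular generates it.

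Finally I would read off the conclusion: under $M_0\cong S^r$ the order $\succ|_{M_0}$ is exactly the monomial order on $S^r$ induced by restricting $\succ$, and $G\cap M_0$ corresponds to $\{(g_1,\ldots,g_r):(0,g_1,\ldots,g_r)\in G\}\subset\mathrm{syz}(f_1,\ldots,f_r)$, so by the previous step this set is a Gr\"obner basis of $\mathrm{syz}(f_1,\ldots,f_r)$, hence generates it; moreover, since $\mathrm{syz}(f_1,\ldots,f_r)=F\cap M_0\ne0$, its Gr\"obner basis $G\cap M_0$ is nonempty, which produces an element $(0,g_1,\ldots,g_r)\in G$ with $(g_1,\ldots,g_r)\ne(0,\ldots,0)$ — the first assertion. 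The step I expect to need the most care is the elimination argument: verifying that $\succ|_{M_0}$ is a genuine module monomial order and that the block decomposition lets divisibility of leading terms remain inside $M_0$; everything else is bookkeeping through the isomorphism $M_0\cong S^r$.
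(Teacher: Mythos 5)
Your proof is correct and follows essentially the same route as the paper's: both identify a syzygy $(g_1,\ldots,g_r)$ with the element $(0,g_1,\ldots,g_r)=\sum_i g_i(f_ie_0+e_i)$ of $F$, find $g\in G$ whose leading term divides that of this element, and use the elimination property to force $g$ into $\bigoplus_{i=1}^r Se_i$. Your write-up merely packages this as the statement that $G\cap M_0$ is a Gr\"obner basis of $F\cap M_0\cong\mathrm{syz}(f_1,\ldots,f_r)$, which is a slightly more systematic presentation of the same argument.
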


\begin{proof}
Let $(h_1, \ldots , h_r) \in \mathrm{syz}(f_1, \ldots , f_r) \smallsetminus \{ (0, \ldots , 0) \}$.
Then we have
\[
h:= (0, h_1, \ldots , h_r ) = \left( \sum_{i=1}^r h_i f_i , h_1, \ldots , h_r \right) = \sum_{i=0}^r h_i (f_i e_0 + e_i ) \in F \smallsetminus \{ (0, \ldots , 0 ) \},
\]
and thus $\mathrm{LM}(h) $ is divisible by $\mathrm{LM}(g)$ for some $g \in G$.
Since $\mathrm{LM}(g) \in \bigoplus_{i=1}^r S e_i $, it follows from the elimination property of $\succ$ that $g \in \bigoplus_{i=1}^r S e_i$, i.e., $g$ is of the form $(0, g_1, \ldots , g_r )$.
This also implies that \eqref{eq:syzGB} is a Gr\"{o}bner basis of $\mathrm{syz}(f_1, \ldots , f_r)$.
\end{proof}

Computing a Gr\"{o}bner basis of a syzygy module also gives a solution to the following problem:
Given $f, f_1, \ldots f_r \in S = K[x_1, \ldots , x_n]$, compute $h_i \in S$ such that $f = \sum_{i=1}^r h_i f_i$, when we know $f \in \langle f_1, \ldots , f_r \rangle_S$.
The following lemma provides a solution:

\begin{lemma}\label{lem:syz}
Let $f, f_1, \ldots , f_r \in S = k[x_1,\ldots , x_n]$.
If $f = \sum_{i=1}^r h_i f_i$ for $h_i \in S$ with $1 \leq i \leq r$, then $(1, - h_1, \ldots , -h_r) \in \mathrm{syz}(f,f_1, \ldots , f_r) \subset S^{r+1} = \bigoplus_{i=0}^r S e_i$.
Moreover, for any monomial order $\succ$ on $S^{r+1}$ with the elimination property with respect to $e_0$ (for example, a POT order with $e_0 \succ e_i$ for all $1 \leq i \leq r$), a Gr\"{o}bner basis of $\mathrm{syz}(f,f_1, \ldots , f_r)$ with respect to $\succ$ includes an element of the form $(c, g_1, \ldots , g_r) \in S^{r+1}$ with $c \in k^{\times}$ and $g_i \in S$ for $1 \leq i \leq r$.
Thus, putting $h_i := - c^{-1} g_i$ with $1 \leq i \leq r$ for such an element $(c, g_1, \ldots , g_r) $, we have $(1, - h_1, \ldots , -h_r) \in \mathrm{syz}(f,f_1, \ldots , f_r) $, so that $f = \sum_{i=1}^r h_i f_i$.
\end{lemma}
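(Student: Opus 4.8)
The plan is to derive the whole statement from the elimination property of $\succ$ with respect to $e_0$, applied to one completely explicit syzygy. The opening assertion is immediate: if $f=\sum_{i=1}^r h_i f_i$, then
\[
1\cdot f+\sum_{i=1}^r(-h_i)f_i=f-\sum_{i=1}^r h_i f_i=0,
\]
so $s:=(1,-h_1,\ldots,-h_r)$ lies in $\mathrm{syz}(f,f_1,\ldots,f_r)\subset S^{r+1}=\bigoplus_{i=0}^r Se_i$, and it is nonzero since its $e_0$-component equals $1$.

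The next step is to identify $\mathrm{LM}(s)$. Because the $e_0$-component of $s$ is the nonzero constant $1$, the vector $s$ does not belong to $\bigoplus_{i=1}^r Se_i$; the contrapositive of the elimination property of $\succ$ with respect to $e_0$ then forces $\mathrm{LT}(s)\in Se_0$. As the $e_0$-component of $s$ is the constant $1$ and $e_0$ is (by axiom (2)' of a module order) the minimum monomial of $Se_0$, we conclude $\mathrm{LT}(s)=e_0$, hence $\mathrm{LM}(s)=e_0$.

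Now let $G$ be any Gr\"obner basis of $\mathrm{syz}(f,f_1,\ldots,f_r)$ with respect to $\succ$. Since $s\in\mathrm{syz}(f,f_1,\ldots,f_r)\smallsetminus\{0\}$, there is $g\in G$ with $\mathrm{LM}(g)$ dividing $\mathrm{LM}(s)=e_0$; but $e_0$ is the only monomial of $S^{r+1}$ dividing $e_0$, so $\mathrm{LM}(g)=e_0$ and $\mathrm{LT}(g)=c\,e_0$ with $c:=\mathrm{LC}(g)\in k^{\times}$. Repeating the argument used for $s$ (every monomial occurring in the $e_0$-component of $g$ is, by axiom (2)', $\succeq e_0$, while $e_0$ is already the leading monomial of $g$), the $e_0$-component of $g$ involves no monomial other than $1$ and hence equals the constant $c$. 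Thus $g=(c,g_1,\ldots,g_r)$ with $c\in k^{\times}$ and $g_i\in S$, which is exactly the asserted form.

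Finally, $g\in\mathrm{syz}(f,f_1,\ldots,f_r)$ means $c f+\sum_{i=1}^r g_i f_i=0$; putting $h_i:=-c^{-1}g_i$ gives $f=\sum_{i=1}^r h_i f_i$, and $(1,-h_1,\ldots,-h_r)=c^{-1}g$ again lies in $\mathrm{syz}(f,f_1,\ldots,f_r)$ because the latter is an $S$-submodule. I do not expect any serious obstacle here; the argument is essentially the same bookkeeping as in Lemma \ref{lem:ComputeSyzygy}, and the only delicate point is the two passages from ``a leading term lies in $Se_0$ with leading monomial $e_0$'' to ``the $e_0$-component is a nonzero constant'', both of which rely only on the minimality of $e_0$ among the monomials of $Se_0$.
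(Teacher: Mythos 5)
Your proof is correct and follows essentially the same route as the paper's: use the elimination property to see that the explicit syzygy $(1,-h_1,\ldots,-h_r)$ has leading term $e_0$, deduce that some Gr\"obner basis element has leading term $c\,e_0$ with $c\in k^{\times}$, and use the minimality of $e_0$ in $Se_0$ to conclude its $e_0$-component is the constant $c$. You merely spell out in more detail the two steps the paper compresses into ``it follows from the elimination property'' and ``by the definition of module monomial orders.''
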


\begin{proof}
The former claim is straightforward.
We show the latter claim.
Putting $N:=\mathrm{syz}(f,f_1, \ldots , f_r)$ and $h:=(1,-h_1,\ldots,-h_r) \in N$, it follows from the elimination property of $\succ$ that $\mathrm{LT}(h) = e_1 \in \mathrm{LT}(N) $.
Thus, letting $G$ be a Gr\"{o}bner basis for $N$, there exists $g \in G$ such that $e_1 = \mathrm{LT}(h)$ is divisible by $\mathrm{LT}(g)$.
Hence $\mathrm{LT}(g) = c e_1$ for some $c \in k^{\times}$, and thus the first entry of $g$ is $c$ by the definition of module monomial orders.
\end{proof}

The notion of syzygies is extended for quotient rings.
More precisely, for polynomials $f_1, \ldots , f_r \in S$ and an ideal $I \subset S$, we consider the following two modules:
\begin{equation}\label{eq:modulo-syzygy}
\mathrm{syz}_I(f_1, \ldots , f_r) := \left\{ (a_1, \ldots , a_r) \in \bigoplus_{i=1}^r S e_i : \sum_{i=1}^r a_if_i = 0 \bmod{I} \right\}
\end{equation}
and
\begin{equation}\label{eq:modulo-syzygy2}
\mathrm{syz}_I (\overline{f_1}, \ldots , \overline{f_r}) := \left\{ (\overline{a}_1, \ldots , \overline{a}_r) \in \bigoplus_{i=1}^r (S/I) e_i : \sum_{i=1}^r a_i f_i  = 0 \bmod{I} \right\},
\end{equation}
where $\overline{f}$ denotes $f \bmod{I}$ for $f \in S$.
Given $f_1, \ldots , f_r$ and a generator set of $I$, we can compute generator sets of these ``modulo'' syzygies by the following lemma:

\begin{lemma}\label{lem:ModSyz}
Let $f_1, \ldots , f_r, f_{r+1}, \ldots , f_{r+s} \in S$, and $I = \langle f_{r+1}, \ldots , f_{r+s} \rangle_S$ an ideal generated by $f_{r+1}, \ldots , f_{r+s}$. 
For an element $f \in S$, we denote $f \bmod{I}$ by $\overline{f}$.
Let $\varphi$ be an $S$-homomorphism from $S^r = \bigoplus_{i=1}^r S e_i$ to $S$ defined by $e_i \mapsto f_i$ for $1 \leq i \leq r$.
Extending $\varphi$ to $S^{r+s} = \bigoplus_{i=1}^{r+s} S e_i $, say $\psi : \bigoplus_{i=1}^{r+s} S e_i \rightarrow S \ ; \ e_i \mapsto f_i$, we have the commutative diagram:
\[
\xymatrix{
\bigoplus_{i=1}^{r+s} S e_i \ar[dr]^{\psi} \ar[d]_{\rm pr} & &  \\
\bigoplus_{i=1}^{r} S e_i \ar[d]_{\bmod{I}} \ar[r]^{\varphi} & S \ar[r]^{\bmod{I}} & S/I ,\\
\bigoplus_{i=1}^{r} (S/I) e_i \ar[urr]^{\tilde{\varphi}} & &
}
\]
where ``$\mathrm{pr}$'' is the canonical projection from $\bigoplus_{i=1}^{r+s} S e_i$ onto $\bigoplus_{i=1}^r S e_i$.
Let $\overline{\varphi}$ be the composite map of $\varphi$ and $\bmod{\; I}$, and $\mathcal{G}$ a generator set of $\mathrm{Ker}(\psi)$.

Then $\mathrm{pr}(\mathcal{G})$ and $\overline{\mathrm{pr}(\mathcal{G})}$ generate $\mathrm{Ker}(\overline{\varphi})$ and $\mathrm{Ker}(\tilde{\varphi})$ respectively, where $\overline{\mathrm{pr}(\mathcal{G})}$ denotes the image of $\mathrm{pr}(\mathcal{G})$ in $(S/I)^r =  \bigoplus_{i=1}^{r} (S/I) e_i$.
Moreover, if $\mathcal{G}$ is a Gr\"{o}bner basis for $\mathrm{Ker}(\psi)$ with respect to a monomial order $\succ$ on $S^{r+s}$ with the elimination property with respect to $e_1, \ldots , e_r$ (for example, a POT order with $e_i \succ e_j$ ($1 \leq i \leq r$, $r+1 \leq j \leq r+s$), then $\mathrm{pr}(\mathcal{G})$ is also a Gr\"{o}bner basis for $\mathrm{Ker}(\overline{\varphi})$ with respect to the monomial order $\succ'$ on $S^r$ induced from the restriction of $\succ$.
\end{lemma}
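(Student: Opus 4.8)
The plan is to reduce everything to the single set-theoretic identity
$\mathrm{pr}(\mathrm{Ker}(\psi)) = \mathrm{Ker}(\overline{\varphi})$; the generation statements are then immediate from $S$-linearity of $\mathrm{pr}$, and the Gr\"obner basis statement follows by feeding the elimination property into the same identity.

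First I would prove the identity. The inclusion $\mathrm{pr}(\mathrm{Ker}(\psi)) \subseteq \mathrm{Ker}(\overline{\varphi})$ is clear: if $\sum_{i=1}^{r+s} a_i f_i = 0$ then $\sum_{i=1}^{r} a_i f_i = -\sum_{i=r+1}^{r+s} a_i f_i \in I$. For the reverse inclusion, given $(a_1,\dots,a_r)$ with $\sum_{i=1}^{r} a_i f_i \in I = \langle f_{r+1},\dots,f_{r+s}\rangle_S$, write $\sum_{i=1}^{r} a_i f_i = \sum_{i=r+1}^{r+s} b_i f_i$ and observe that $(a_1,\dots,a_r,-b_{r+1},\dots,-b_{r+s}) \in \mathrm{Ker}(\psi)$ is a lift of $(a_1,\dots,a_r)$. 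Since $\mathcal{G}$ generates $\mathrm{Ker}(\psi)$, applying $\mathrm{pr}$ shows $\mathrm{pr}(\mathcal{G})$ generates $\mathrm{pr}(\mathrm{Ker}(\psi)) = \mathrm{Ker}(\overline{\varphi})$; reducing further modulo $I$ — using that $\overline{\varphi}$ factors as $\tilde\varphi$ composed with reduction, which is precisely the commutativity of the displayed diagram and itself a consequence of $f_{r+1},\dots,f_{r+s}\in I$ — shows $\overline{\mathrm{pr}(\mathcal{G})}$ generates $\mathrm{Ker}(\tilde\varphi)$.

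Next I would handle the Gr\"obner basis claim. The point to extract from the elimination property with respect to $e_1,\dots,e_r$ is that, for any $g \in S^{r+s}$ whose projection $\mathrm{pr}(g)$ is nonzero, its $\succ$-leading term already lies in $\bigoplus_{i=1}^{r} S e_i$ and coincides with the $\succ'$-leading term of $\mathrm{pr}(g)$ (in the POT example this holds because position $i \le r$ dominates any position $j > r$). Now take $h \in \mathrm{Ker}(\overline{\varphi}) \smallsetminus \{0\}$; by the reverse inclusion it lifts to some $\tilde h \in \mathrm{Ker}(\psi)$ with $\mathrm{pr}(\tilde h) = h \ne 0$, so $\mathrm{LT}_{\succ}(\tilde h) = \mathrm{LT}_{\succ'}(h)$. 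Since $\mathcal{G}$ is a Gr\"obner basis of $\mathrm{Ker}(\psi)$, some $g \in \mathcal{G}$ has $\mathrm{LT}_{\succ}(g)$ dividing $\mathrm{LT}_{\succ}(\tilde h) = \mathrm{LT}_{\succ'}(h)$; in particular $\mathrm{LT}_{\succ}(g) \in \bigoplus_{i=1}^{r} S e_i$, whence $\mathrm{pr}(g) \ne 0$ and $\mathrm{LT}_{\succ'}(\mathrm{pr}(g)) = \mathrm{LT}_{\succ}(g)$ divides $\mathrm{LT}_{\succ'}(h)$. This exhibits, for every nonzero $h \in \mathrm{Ker}(\overline{\varphi})$, a divisor of its leading term among the leading terms of $\mathrm{pr}(\mathcal{G})$, which is exactly the Gr\"obner basis condition from Subsection~\ref{subsec:Groebner}.

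The only step requiring genuine care is this last bookkeeping with the elimination property: one must be precise that ``leading term supported in the first $r$ positions'' forces ``the whole vector supported in the first $r$ positions,'' and that passing from $\tilde h$ down to $h = \mathrm{pr}(\tilde h)$ leaves the leading term unchanged, so that divisibility in $S^{r+s}$ transfers to divisibility in $S^r$. Everything else — the module identity, the diagram chase, and finiteness of $\mathrm{pr}(\mathcal{G})$ — is routine.
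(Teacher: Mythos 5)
Your proposal is correct and follows essentially the same route as the paper's proof: both establish the surjection $\mathrm{pr}:\mathrm{Ker}(\psi)\twoheadrightarrow\mathrm{Ker}(\overline{\varphi})$ by the same lifting argument, deduce the generation statements by $S$-linearity and reduction modulo $I$, and then transfer the leading-term divisibility through the elimination property exactly as in the paper (your extra care that a leading term supported in the first $r$ positions forces $\mathrm{LT}_{\succ}(g)=\mathrm{LT}_{\succ'}(\mathrm{pr}(g))$ is the same observation the paper invokes). No gaps.
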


\begin{proof}
Letting $\mathcal{G} = \{ (h_{1,k}, \ldots , h_{r,k}, h_{r+1,k}, \ldots , h_{r+s,k} ) : 1 \leq k \leq m \}$ be a generator set of $\mathrm{Ker}(\psi)$, we have $\mathrm{pr}(\mathcal{G}) = \{ {h}_k := (h_{1,k}, \ldots ,h_{r,k} ) : 1 \leq k \leq m \}$.
Since $\mathrm{pr}(\mathcal{G}) \subset \mathrm{Ker}(\overline{\varphi})$ is clear, we here prove the converse $\langle \mathrm{pr}(\mathcal{G}) \rangle_S \supset \mathrm{Ker}(\overline{\varphi})$.
Let $h \in \mathrm{Ker}(\overline{\varphi})$ with $h= (h_1, \ldots , h_r) \in \bigoplus_{i=1}^{r} S e_i$ ($h_i \in S$).
Since $h_1 f_1 + \cdots + h_r f_r \in I$, there exist $h_{r+1}, \ldots , h_{r+s} \in S$ such that $h_1 f_1 + \cdots + h_r f_r - h_{r+1} f_{r+1} - \cdots - h_{r+s} f_{r+s} = 0$.
Thus $\tilde{h}:=(h_1, \ldots , h_{r} , -h_{r+1}, \ldots , -h_{r+s}) \in \mathrm{Ker}(\psi)$, and so there exist $a_k \in S$ with $1 \leq k \leq m$ such that 
\[
(h_1, \ldots , h_{r} , -h_{r+1}, \ldots , -h_{r+s}) = \sum_{k=1}^m a_k \cdot (h_{1,k}, \ldots , h_{r,k}, h_{r+1,k}, \ldots , h_{r+s,k} ).
\]
Hence $(h_1, \ldots , h_{r} ) = \sum_{k=1}^m a_k \cdot (h_{1,k}, \ldots , h_{r,k})$, so that $h = \sum_{k=1}^m a_k h_k$.
We also have $\langle \overline{\mathrm{pr}(\mathcal{G})} \rangle = \mathrm{Ker}(\tilde{\varphi})$ since $\overline{\mathrm{Ker}(\overline{\varphi})} = \mathrm{Ker}(\tilde{\varphi})$.

Suppose that $\mathcal{G}$ is a Gr\"{o}bner basis for $\mathrm{Ker}(\psi)$ with respect to $\succ$.
In this case, for above $h$ and $\tilde{h}$ with $h \neq 0$, there exists $g \in \mathcal{G}$ such that $\mathrm{LT}_{\succ}(\tilde{h})$ is divisible by $\mathrm{LT}_{\succ}(g)$. 
It follows from the elimination property of $\succ$ that $\mathrm{LT}_{\succ}(\tilde{h}) = \mathrm{LT}_{\succ'}({h})$, and thus $\mathrm{LT}_{\succ}(g) \in \bigoplus_{i=1}^r S e_i$ and $\mathrm{LT}_{\succ}(g) = \mathrm{LT}_{\succ'}(\mathrm{pr} (g))$.
Thus $\mathrm{pr}(\mathcal{G})$ is also a Gr\"{o}bner basis for $\mathrm{Ker}(\overline{\varphi})$.
\end{proof}

\subsection{Computing the normalization as an affine ring}

Assume that $k$ is a perfect field.
Let $A$ be an affine domain, say $A = S / I$ for some prime ideal $I \subset S = k[x_1, \ldots , x_n]$, and let $\overline{A}$ denote the integral closure of $A$ in the field of fractions $Q(A)$.
It is well-known that $\overline{A}$ is also an affine domain (cf.\ \cite[Ch.\ V, Theorem 9]{ZS}).
In this subsection, we review a (generic) method given in \cite[\S 3.6]{GP} to compute the normalization $\overline{A}$ of $A$ as an affine domain.

The following lemma is straightforward:

\begin{lemma}[cf.\ {\cite[Lemma 3.6.1]{GP}}]\label{lem:int3}
Let $A$ be a Noether reduced ring, and $\tilde{J} \subset A$ an ideal containing a non-zerodivisor $a$.
Then we have the following:
\begin{enumerate}
\item The endomorphism ring $\mathrm{Hom}_A (\tilde{J},\tilde{J})$ is a commutative ring.
\item $A$ is a subring of $\mathrm{Hom}_A (\tilde{J}, \tilde{J})$ via the map which maps $x \in A$ to the multiplication by $x$. 
\item $\mathrm{Hom}_A(\tilde{J},\tilde{J}) \cong (1/a) \cdot (a \tilde{J} : \tilde{J}) \subset Q(A)$, where
\[
a \tilde{J} : \tilde{J} := \{ b \in A : b \tilde{J} \subset a \tilde{J} \}.
\]
\item $(1/a) \cdot (a \tilde{J} : \tilde{J}) \subset \overline{A}$.
\end{enumerate}
\end{lemma}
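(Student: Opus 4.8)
The plan is to prove part (3) first, from which (1) and (2) follow almost formally, and then to deduce (4) by the determinant trick. Since $a$ is a non-zerodivisor of the reduced Noetherian ring $A$, it is a unit in $Q(A)$, so $\phi(a)/a$ makes sense for every $\phi \in \mathrm{Hom}_A(\tilde{J},\tilde{J})$. I would define
\[
\Phi\colon \mathrm{Hom}_A(\tilde{J},\tilde{J}) \to Q(A), \qquad \phi \mapsto \phi(a)/a,
\]
and observe that for every $b \in \tilde{J}$ the $A$-linearity of $\phi$ gives $a\,\phi(b) = \phi(ab) = b\,\phi(a)$, hence $\phi(b) = (\phi(a)/a)\,b$ in $Q(A)$; thus $\phi$ acts on $\tilde{J}$ as multiplication by $\Phi(\phi)$. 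From this, $\Phi$ is additive, sends $\mathrm{id}_{\tilde J}$ to $1$, and is multiplicative, since $\Phi(\phi\circ\psi) = \phi(\psi(a))/a = (\phi(a)/a)\,(\psi(a)/a) = \Phi(\phi)\,\Phi(\psi)$, using $\psi(a)\in\tilde J$. It is injective, because $\Phi(\phi)=0$ forces $\phi(a)=0$, hence $\phi(b)=(\phi(a)/a)b=0$ for all $b\in\tilde J$.

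Next I would identify the image of $\Phi$. For $\phi$ and any $x \in \tilde J$ we have $\phi(a)\,x = \phi(ax) = a\,\phi(x) \in a\tilde J$, so $\phi(a) \in (a\tilde J : \tilde J)$ and $\Phi(\phi) \in (1/a)(a\tilde J : \tilde J)$. Conversely, for $b \in (a\tilde J : \tilde J)$ the assignment $x \mapsto (b/a)\,x$ maps $\tilde J$ into $\tilde J$ (because $bx \in a\tilde J$) and is $A$-linear, and $\Phi$ sends it to $b/a$; hence $\mathrm{Im}(\Phi) = (1/a)(a\tilde J : \tilde J)$. Finally I would check that this subset of $Q(A)$ is a subring containing $A$: it contains $A$ since $x\tilde J \subset \tilde J$ yields $ax \in (a\tilde J : \tilde J)$, and it is closed under multiplication since $b_1,b_2 \in (a\tilde J:\tilde J)$ gives $b_1 b_2\tilde J \subset a b_1\tilde J \subset a^2\tilde J$, so $(b_1/a)(b_2/a) \in (1/a)(a\tilde J:\tilde J)$. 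This proves (3).

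Parts (1) and (2) are then immediate. For (1), $\mathrm{Hom}_A(\tilde J,\tilde J)$ is isomorphic as a ring, via $\Phi$, to a subring of the commutative ring $Q(A)$, hence commutative. For (2), the map $A \to \mathrm{Hom}_A(\tilde J,\tilde J)$ sending $x$ to multiplication by $x$ is a ring homomorphism, and it is injective because $xa=0$ with $a$ a non-zerodivisor forces $x=0$; under $\Phi$ it corresponds to the inclusion $A \hookrightarrow Q(A)$.

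For (4), let $q = b/a$ with $b \in (a\tilde J : \tilde J)$, so $q\tilde J \subset \tilde J$. As $A$ is Noetherian, $\tilde J$ is a finitely generated $A$-module, say $\tilde J = \sum_{i=1}^m A g_i$; writing $q g_i = \sum_{j} a_{ij} g_j$ with $a_{ij} \in A$ and applying the determinant (Cayley--Hamilton) trick to $(a_{ij})$ produces a monic $P \in A[T]$ with $P(q)\,\tilde J = 0$. Evaluating on $a \in \tilde J$ gives $P(q)\,a = 0$ in $Q(A)$, and since $a$ is a unit there, $P(q) = 0$, so $q$ is integral over $A$, i.e.\ $q \in \overline A$. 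The step requiring the most care is (3): one must check that $\Phi$ is a well-defined ring homomorphism --- multiplicativity hinges on the identity $\phi(b) = \Phi(\phi)\,b$ on $\tilde J$ --- and then verify the two inclusions describing $\mathrm{Im}(\Phi)$; the remaining arguments are routine.
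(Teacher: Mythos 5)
Your proof is correct. The paper itself offers no proof of this lemma --- it declares it ``straightforward'' and defers to \cite[Lemma 3.6.1]{GP} --- and your argument (the map $\phi\mapsto\phi(a)/a$ into $Q(A)$, the identity $\phi(b)=(\phi(a)/a)\,b$ forced by $A$-linearity, and the determinant trick combined with the fact that $a$ is a unit of $Q(A)$ to get $P(q)=0$ from $P(q)\tilde J=0$) is exactly the standard one from the cited source. The only slightly terse spot is the closure of $(1/a)(a\tilde J:\tilde J)$ under multiplication, where one should note that $b_1b_2\tilde J\subset a^2\tilde J$ together with $a\in\tilde J$ and cancellation of the non-zerodivisor $a$ gives $b_1b_2/a\in(a\tilde J:\tilde J)$; but this is also immediate from the fact that $\Phi$ is a multiplicative bijection onto that set, which you have already established.
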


Here is a well-known criterion for normality by Grauert-Remmert~\cite{GR}:

\begin{proposition}[{\cite[pp.\ 220--221]{GR}}]\label{prop:criteria}
Let $A$ be a Noether reduced ring, and $\tilde{J}$ an ideal of $A$ satisfying the following three conditions:
\begin{enumerate}
\item $\tilde{J}$ contains a non-zerodivisor of $A$,
\item $\tilde{J}$ is a radical ideal, and
\item $N(A) \subset V(\tilde{J})$, where
\[
N(A) = \{ \mathfrak{p} \in \mathrm{Spec}(A) : \mbox{$A_{\mathfrak{p}}$ is not normal} \}
\]
is the non-normal locus of $A$.
\end{enumerate}
Then, $A$ is normal if and only if $A = \mathrm{Hom}_A(\tilde{J},\tilde{J})$.
\end{proposition}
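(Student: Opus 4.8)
The plan is to prove the two implications separately. The forward one is a one-line consequence of Lemma~\ref{lem:int3}: if $A$ is normal, i.e.\ $A=\overline{A}$, then fixing a non-zerodivisor $a\in\tilde{J}$ (which exists by~(1)), parts~(3) and~(4) of Lemma~\ref{lem:int3} give $\mathrm{Hom}_A(\tilde{J},\tilde{J})\cong(1/a)\cdot(a\tilde{J}:\tilde{J})\subset\overline{A}=A$, while part~(2) gives the reverse inclusion $A\subset\mathrm{Hom}_A(\tilde{J},\tilde{J})$; hence equality. (This direction uses neither~(2) nor~(3).) So the substance is the converse: assuming $A=\mathrm{Hom}_A(\tilde{J},\tilde{J})$ and~(1)--(3), prove $A=\overline{A}$.

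For the converse I would argue by contradiction, producing an element of $\mathrm{Hom}_A(\tilde{J},\tilde{J})\smallsetminus A$. Suppose $A\subsetneq\overline{A}$ and fix $x\in\overline{A}\smallsetminus A$, and set $\mathfrak{a}:=\{a\in A:ax\in A\}$, a proper ideal of $A$. For a prime $\mathfrak{q}\supset\mathfrak{a}$ one has $x\notin A_{\mathfrak{q}}$ (else $sx\in A$ for some $s\in A\smallsetminus\mathfrak{q}$ would force $s\in\mathfrak{a}\subset\mathfrak{q}$); since integral closure commutes with localization, $x\in\overline{A_{\mathfrak{q}}}$, so $A_{\mathfrak{q}}$ is not normal and $\mathfrak{q}\in N(A)$. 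Hence $V(\mathfrak{a})\subset N(A)\subset V(\tilde{J})$ by~(3), so $\tilde{J}\subset\sqrt{\mathfrak{a}}$, and since $A$ is Noetherian, $\tilde{J}^{\,m}\subset\mathfrak{a}$ for some $m\ge 1$, i.e.\ $\tilde{J}^{\,m}x\subset A$. Let $\ell\ge 1$ be minimal with $\tilde{J}^{\,\ell}x\subset A$ ($\ell\le m$, and $\ell\ge 1$ since $Ax\not\subset A$); by minimality of $\ell$, pick $y\in\tilde{J}^{\,\ell-1}x$ with $y\notin A$. Then $y\in\overline{A}$ (as $\tilde{J}^{\,\ell-1}x\subset A\cdot\overline{A}\subset\overline{A}$) and $y\tilde{J}\subset\tilde{J}^{\,\ell}x\subset A$.

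The crux is then to upgrade $y\tilde{J}\subset A$ to $y\tilde{J}\subset\tilde{J}$. For $z\in\tilde{J}$ we have $yz\in A$; taking an integral equation $y^{n}+a_{n-1}y^{n-1}+\dots+a_0=0$ over $A$ and multiplying through by $z^{n}$ gives $(yz)^{n}=-\sum_{i=0}^{n-1}a_i\,z^{\,n-i}(yz)^{i}$, which lies in $\tilde{J}$ because $z^{\,n-i}\in\tilde{J}$ for $i<n$ and each $a_i(yz)^{i}\in A$; as $\tilde{J}$ is radical by~(2), $yz\in\tilde{J}$. Thus multiplication by $y$ is an $A$-endomorphism of $\tilde{J}$, i.e.\ an element of $\mathrm{Hom}_A(\tilde{J},\tilde{J})$, and under the identification of Lemma~\ref{lem:int3}~(3) with a subring of $Q(A)$ it is the element $y$ itself. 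Since $A=\mathrm{Hom}_A(\tilde{J},\tilde{J})$, this forces $y\in A$, contradicting the choice of $y$. Therefore $A=\overline{A}$, i.e.\ $A$ is normal.

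I expect the construction of $y$ to be the main obstacle: one has to notice that taking the \emph{minimal} power $\ell$ simultaneously forces $y\notin A$ and $y\tilde{J}\subset A$, and then that $y\tilde{J}\subset A$ can be strengthened to $y\tilde{J}\subset\tilde{J}$ precisely because $\tilde{J}$ is radical and $y$ is integral over $A$ — this is where hypothesis~(2) enters. One technical point: passing from $\tilde{J}\subset\sqrt{\mathfrak{a}}$ to a genuine power $\tilde{J}^{\,m}\subset\mathfrak{a}$ relies on the Noetherian hypothesis. If instead one argued via the conductor $\mathfrak{C}:=(A:_{A}\overline{A})$ and the identity $V(\mathfrak{C})=N(A)$, one would additionally need $\overline{A}$ to be a finite $A$-module, so routing the argument through the ideal $\mathfrak{a}$ keeps only ``Noetherian reduced'' in play.
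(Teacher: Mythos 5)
The paper does not prove this proposition at all --- it is quoted from Grauert--Remmert \cite[pp.~220--221]{GR} with no argument supplied --- so there is nothing internal to compare against. Your proof is correct and is essentially the standard proof of the Grauert--Remmert criterion (the one reproduced, e.g., in \cite{GP}): the forward direction from Lemma~\ref{lem:int3}(2)--(4); for the converse, the ideal $\mathfrak{a}=(A:_A x)$ for $x\in\overline{A}\smallsetminus A$, the inclusion $V(\mathfrak{a})\subset N(A)\subset V(\tilde{J})$ giving $\tilde{J}^{\,m}x\subset A$ by Noetherianity, the minimal-power trick producing $y\in\overline{A}\smallsetminus A$ with $y\tilde{J}\subset A$, and the integrality-plus-radical argument upgrading this to $y\tilde{J}\subset\tilde{J}$, whence $y$ defines an element of $\mathrm{Hom}_A(\tilde{J},\tilde{J})\smallsetminus A$. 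All three hypotheses are used exactly where you say they are, and the final identification of the endomorphism ``multiplication by $y$'' with $y\in Q(A)$ is legitimate because the non-zerodivisor $a$ is a unit in $Q(A)$, so $\varphi\mapsto\varphi(a)/a$ is injective on multiplication maps. I see no gap.
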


The following lemma shows that the normality of $A$, i.e., $A = \mathrm{Hom}_A(\tilde{J},\tilde{J})$ is determined by the equality of ideals in $A$, and that the structure of $\mathrm{Hom}_A(\tilde{J},\tilde{J})$ as an affine ring is also determined:

\begin{lemma}[{\cite[Lemma 3.6.7]{GP}}]\label{lem:normal}
Let $A$ be a Noether reduced ring, and $\tilde{J} \subset A$ an ideal containing a non-zerodivisor $a$.
Then we have the following:
\begin{enumerate}
\item $A = \mathrm{Hom}_A(\tilde{J},\tilde{J})$ is equivalent to $a \tilde{J} : \tilde{J} = \langle a \rangle_A$.
\end{enumerate}
Let $\{ u_0 = a, u_1, \ldots , u_s \}$ be a generator set of the ideal $a \tilde{J} : \tilde{J}$.
\begin{enumerate}
\item[(2)] For each $0 \leq i, j \leq s$, there exist $\xi_{k}^{ij} \in A$ with $u_i \cdot u_j = \sum_{k=0}^s a \xi_{k}^{ij} u_k$ in $A$. 
\end{enumerate}
Let $\mathrm{syz}(u_0, \ldots , u_s)$ be the syzygy module of $(a \tilde{J} : \tilde{J})$, that is, the kernel of the $A$-homomorphism $A^{s+1} \rightarrow (a \tilde{J} : \tilde{J}) \ ; \ e_j \mapsto u_j$, where $e_j$ denotes the vector in $A^{s+1}$ with $1$ in its $j$-th coordinate and $0$'s elsewhere.
Note that $\mathrm{syz}(u_0, \ldots , u_s)$ is finitely generated over $A$ since $A$ is Noetherian.
Let $\{ (\eta_0^{(k)}, \ldots , \eta_s^{(k)} ) : 1 \leq k \leq m \} \subset A^{s+1}$ be a generator set of $\mathrm{syz}(u_0, \ldots , u_s)$, and $\tilde{I} \subset A[t_1, \ldots , t_s]$ the ideal generated by
\begin{equation}\label{eq:GenSet}
\left\{ t_i t_j - \sum_{k=0}^s \xi_k^{ij} t_k : 1 \leq i \leq j \leq s \right\} \cup \left\{ \sum_{\nu=0}^s \eta_{\nu}^{(k)} t_{\nu} : 1 \leq k \leq m \right\},
\end{equation}
where $t_0 := 1$.
Then,
\begin{enumerate}
\item[(3)] $t_i \mapsto u_i / a$ ($1 \leq i \leq s$) defines a surjective ring homomorphism
\[
\phi : A[t_1, \ldots , t_s] \rightarrow \mathrm{Hom}_A( \tilde{J}, \tilde{J}) \cong (1/a) \cdot (a \tilde{J} : \tilde{J})
\]
with $\mathrm{Ker}(\phi) = \tilde{I}$, and thus $A[t_1, \ldots , t_s] /\tilde{I} \cong \mathrm{Hom}_A (\tilde{J},\tilde{J}) $ as $A$-algebras.
\end{enumerate}
\end{lemma}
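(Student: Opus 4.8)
The plan is to carry out all three parts inside the field of fractions $Q(A)$, using Lemma~\ref{lem:int3} to identify $\mathrm{Hom}_A(\tilde J,\tilde J)$ with the commutative $A$-subalgebra $(1/a)\cdot(a\tilde J:\tilde J)$ of $Q(A)$ (contained in $\overline A$), inside which $A$ sits via the multiplication map and corresponds to the submodule $(1/a)\langle a\rangle_A=A$. For part (1) I would first note that $\langle a\rangle_A\subseteq a\tilde J:\tilde J$ always holds, since $ac\cdot\tilde J\subseteq a\tilde J$ for every $c\in A$; as $a$ is a non-zerodivisor, multiplication by $1/a$ is injective on $A$-submodules of $Q(A)$, so the equality $A=\mathrm{Hom}_A(\tilde J,\tilde J)$, which translates to $(1/a)\langle a\rangle_A=(1/a)(a\tilde J:\tilde J)$, is equivalent to $\langle a\rangle_A=a\tilde J:\tilde J$. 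For part (2), since each $u_i/a$ lies in the ring $\mathrm{Hom}_A(\tilde J,\tilde J)$, so does the product $u_iu_j/a^2\in(1/a)(a\tilde J:\tilde J)$; writing $u_iu_j/a^2=w/a$ with $w\in a\tilde J:\tilde J=\langle u_0,\dots,u_s\rangle_A$, expanding $w=\sum_{k=0}^s\xi^{ij}_k u_k$ with $\xi^{ij}_k\in A$, and clearing the denominator $a$ gives $u_iu_j=\sum_{k=0}^s a\xi^{ij}_k u_k$.

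For part (3), the map $\phi$ is a well-defined $A$-algebra homomorphism by the universal property of the polynomial ring $A[t_1,\dots,t_s]$, the target being a commutative $A$-algebra by Lemma~\ref{lem:int3}(1)--(2), and it is surjective because its image contains $A$ together with all $u_i/a=\phi(t_i)$, which generate $(1/a)(a\tilde J:\tilde J)$ over $A$. I would check $\tilde I\subseteq\mathrm{Ker}(\phi)$ on the two families of generators in \eqref{eq:GenSet}: each $t_it_j-\sum_k\xi^{ij}_k t_k$ maps to $u_iu_j/a^2-(1/a)\sum_k\xi^{ij}_k u_k=0$ by part (2) (recalling $t_0=1$ and $u_0/a=1$), and each $\sum_\nu\eta^{(k)}_\nu t_\nu$ maps to $(1/a)\sum_\nu\eta^{(k)}_\nu u_\nu=0$ since $(\eta^{(k)}_0,\dots,\eta^{(k)}_s)\in\mathrm{syz}(u_0,\dots,u_s)$. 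The substantive direction is $\mathrm{Ker}(\phi)\subseteq\tilde I$: given $P\in\mathrm{Ker}(\phi)$, I would repeatedly use the quadratic generators to rewrite, modulo $\tilde I$, any monomial of $P$ of total degree $\ge2$ as an $A$-linear combination of monomials of strictly smaller degree; inducting on the top degree this terminates and yields $P\equiv\sum_{\nu=0}^s c_\nu t_\nu\pmod{\tilde I}$ with $c_\nu\in A$ and $t_0=1$. Applying $\phi$ then gives $(1/a)\sum_\nu c_\nu u_\nu=0$, hence $\sum_\nu c_\nu u_\nu=0$ in $A$ since $a$ is a non-zerodivisor, so $(c_0,\dots,c_s)\in\mathrm{syz}(u_0,\dots,u_s)$; writing it as an $A$-combination of the $(\eta^{(k)}_0,\dots,\eta^{(k)}_s)$ exhibits $\sum_\nu c_\nu t_\nu$ as an element of $\tilde I$, and therefore $P\in\tilde I$.

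I expect the one genuinely delicate point to be this final reduction step: one must verify that the rewriting with the quadratic relations both terminates and genuinely lands in $\sum_{\nu=0}^s A t_\nu$ --- that is, that the quadratic relations suffice to straighten every monomial --- and that the residual linear relations among $1,t_1,\dots,t_s$ are governed precisely by $\mathrm{syz}(u_0,\dots,u_s)$. The remaining parts are direct denominator-clearing manipulations inside $Q(A)$, combined with the structural facts already recorded in Lemma~\ref{lem:int3}.
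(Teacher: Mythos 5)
The paper states this lemma purely as a citation of \cite[Lemma 3.6.7]{GP} and supplies no proof of its own, so there is no in-paper argument to compare against; your proof is correct and is essentially the standard one from that reference: identify $\mathrm{Hom}_A(\tilde J,\tilde J)$ with $(1/a)\cdot(a\tilde J:\tilde J)$ via Lemma \ref{lem:int3}, clear denominators for (1) and (2), and for (3) verify both inclusions, using the quadratic relations to reduce any kernel element to a linear polynomial and the syzygy generators to absorb the resulting linear relation, with the non-zerodivisor property of $a$ guaranteeing that $\sum_\nu c_\nu u_\nu=0$ holds already in $A$. One cosmetic remark: since $A$ is only assumed reduced, $Q(A)$ is the total ring of fractions rather than a field, but your argument uses only that $a$ is invertible there and that $A$ embeds into $Q(A)$, so nothing is affected.
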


Based on Lemma \ref{lem:int3}, Proposition \ref{prop:criteria} and Lemma \ref{lem:normal}, we obtain an algorithm to compute the normalization:

\begin{algorithm}[Sketch of {\cite[Algorithm 3.6.9]{GP}}]\label{alg:normalization}
~
\begin{enumerate}
\item[{\bf Input}:] A pair $(S,I)$ of a polynomial ring $S$ over $k$ and its prime ideal $I$.
\item[{\bf Output}:] A pair $(S',I')$ of a polynomial ring $S'$ over $k$ and its prime ideal $I'$.
\end{enumerate}
\begin{enumerate}
\item Choose a test pair $(\tilde{J}, \overline{a} )$ of $A := S/I$, where $\overline{a}$ denotes the residue class of an element $a \in S$ modulo $I$.
\item Test whether $\overline{a} \tilde{J} : \tilde{J} = \langle \overline{a} \rangle_A$ or not, by computing a generator set of $\overline{a} \tilde{J} : \tilde{J}$.
If $\overline{a} \tilde{J} : \tilde{J} = \langle \overline{a} \rangle_A$, then output $(S,I)$, otherwise go to the next step.
\item Compute a generator set of the ideal $I'$ in $S' = S[t_1, \ldots , t_s]$ corresponding to the ideal $\tilde{I}$ in $A[t_1, \ldots , t_s]$ generated by \eqref{eq:GenSet}.
Replace $(S,I)$ by $(S',I')$, and go back to (1). 
\end{enumerate}
\end{algorithm}

As for the computation of $\overline{a} \tilde{J} : \tilde{J} $, we use the following lemma (without proof):

\begin{lemma}\label{lem:quo_ideal}
Let $S$ be a ring, and $I \subset J \subset S$ ideals.
Put $A = S/I$ and $\tilde{J} = J/I$.
Then we have
\[
( (a J + I) : J) / I = \overline{a} \tilde{J} : \tilde{J} 
\]
in $A$ for all $a \in S$, where $\overline{a}$ denotes $a \bmod{I}$.
Note that $I \subset (a J + I) : J$ and $\langle a \rangle_S \subset (a J + I ): J$.

Hence $  \overline{a} \tilde{J} : \tilde{J}  = \langle \overline{a} \rangle_A$ is equivalent to $(a J + I ) : J \subset \langle a \rangle_S + I$.
\end{lemma}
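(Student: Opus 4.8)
The plan is to identify both sides of the claimed equality as ideals of $A$ by unwinding the definitions of the colon ideal and of the quotient $S/I$, and then to deduce the ``Hence'' statement simply by pulling ideals back along the projection $\pi\colon S\to A=S/I$.

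First I would fix $b\in S$ and describe exactly when $\overline{b}$ lies in $\overline{a}\tilde{J}:\tilde{J}$. By the definition used in Lemma \ref{lem:normal} this means $\overline{b}\,\tilde{J}\subseteq\overline{a}\,\tilde{J}$, which, reading back to $S$, says that for every $j\in J$ there is $j'\in J$ with $bj-aj'\in I$; equivalently $bJ\subseteq aJ+I$, i.e.\ $b\in(aJ+I):J$. Hence the preimage $\pi^{-1}\bigl(\overline{a}\tilde{J}:\tilde{J}\bigr)$ equals $(aJ+I):J$. Since $IJ\subseteq I\subseteq aJ+I$ we have $I\subseteq(aJ+I):J$, so this preimage contains $I$, and therefore $((aJ+I):J)/I=\pi\bigl((aJ+I):J\bigr)=\overline{a}\tilde{J}:\tilde{J}$, which is the first displayed equality. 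Along the way I would also record $\langle a\rangle_S\subseteq(aJ+I):J$ (because $aJ\subseteq aJ+I$), as stated.

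For the ``Hence'' part I would first note that $\overline{a}\tilde{J}:\tilde{J}$ is an ideal of $A$ containing $\overline{a}$ (trivially $\overline{a}\,\tilde{J}\subseteq\overline{a}\,\tilde{J}$), so it always contains $\langle\overline{a}\rangle_A$; thus the asserted equality $\overline{a}\tilde{J}:\tilde{J}=\langle\overline{a}\rangle_A$ is equivalent to the single reverse inclusion $\overline{a}\tilde{J}:\tilde{J}\subseteq\langle\overline{a}\rangle_A$. Applying $\pi^{-1}$: the left-hand side pulls back to $(aJ+I):J$ by the first part, while $\langle\overline{a}\rangle_A$ pulls back to $\langle a\rangle_S+I$. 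Since both are ideals of $S$ containing $I$, an inclusion of their images in $A$ is equivalent to the inclusion of their preimages in $S$, giving $(aJ+I):J\subseteq\langle a\rangle_S+I$, as claimed.

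I expect no serious obstacle here; the only points needing a little care are being consistent about whether a colon ideal is formed in $S$ or in $A$, and using that for ideals containing $I$, inclusion upstairs in $S$ and inclusion of images downstairs in $A=S/I$ are equivalent --- both elementary features of the correspondence between ideals of $S$ containing $I$ and ideals of $A$. No hypothesis beyond $I\subseteq J$ being ideals of $S$ is required; in particular $a$ need not be a non-zerodivisor for this lemma.
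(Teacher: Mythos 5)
Your proof is correct and complete; the paper states this lemma explicitly ``without proof,'' and your argument --- identifying $(aJ+I):J$ as the full preimage of $\overline{a}\tilde{J}:\tilde{J}$ under $S\to S/I$ and then using the ideal correspondence for the ``Hence'' part --- is exactly the standard verification the authors left to the reader. No gaps; in particular you correctly observe that no non-zerodivisor hypothesis on $a$ is needed here.
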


In the following, we briefly explain details of each step of Algorithm \ref{alg:normalization}.
Let $\{ f_1, \ldots , f_{\ell} \}$ be a generator set of $I$ in the loop we consider.

For (1), a typical choice of $\tilde{J}$ is (the radical of) the Jacobian ideal of $I$, that is, the ideal in $A$ generated by the images of the $(n-r) \times (n-r)$-minors of the Jacobian matrix $(\partial f_i / \partial x_j)$, where $r = \mathrm{dim}(I)$ (cf.\ \cite[Section 5.7]{GP}).
Note that $\tilde{J} = A$ is equivalent to $A = \overline{A}$, by Lemma \eqref{lem:normal} (1).

To decide whether $\overline{a} \tilde{J} : \tilde{J} = \langle \overline{a} \rangle_A$ or not in (2), it suffices to alternatively check $(a J + I ): J \subset \langle a \rangle_S + I$, see Lemma \ref{lem:quo_ideal}.
For this, we compute a Gr\"{o}bner basis $\{u_0 = a,\ u_1, \ldots , u_s\} \subset S$ for $(a J + I ): J$, and check $u_i \in \langle a \rangle_S + I$ or not for each $0 \leq i \leq s$.
Note that $\{ \overline{u_i} := u_i \bmod{I} : 0 \leq i \leq s \}$ generates $\overline{a} \tilde{J} : \tilde{J}$.

In (3), we first compute $\xi_{k}^{ij} \in S$ such that $u_i \cdot u_j = \sum_{k=0}^s \xi_{k}^{ij} (a u_k ) + \sum_{k=1}^{\ell} \xi_{s+k}^{ij} f_k$ (cf.\ Lemma \ref{lem:normal}).
It follows from Lemma \ref{lem:syz} that this representation can be obtained by computing a Gr\"{o}bner basis for $\mathrm{syz}(u_i u_j, a u_0, \ldots , a u_s, f_1 , \ldots , f_{\ell})$ in $S^{s+\ell+2} = \bigoplus_{i=0}^{s+\ell+1} S e_i$ with respect to a monomial order $\succ$ on $S^{s+\ell+2}$ with the elimination property with respect to $e_0$.
The second step of (3) is to compute the set of representatives $\{ ( \eta_0^{(1)}, \ldots , \eta_s^{(1)}), \ldots , ( \eta_0^{(m)}, \ldots , \eta_s^{(m)}) \} \subset S^{s+1}$ of generators for $\mathrm{syz}_I(\overline{u_0}, \ldots , \overline{u_s}) \subset A^{s+1}$ with $A = S/I$.
For this, it suffices to compute a Gr\"{o}bner basis of $\mathrm{syz}(u_0, u_1, \ldots , u_s, f_1, \ldots , f_{\ell}) \subset S^{s+\ell +1} =\bigoplus_{i=0}^{s+\ell} S e_i$ with respect to an arbitrary monomial order on $S^{s+\ell + 1}$, see Lemma \ref{lem:ModSyz} for details.

\begin{example}\label{ex:normalization}
Consider a plane singular curve $C = V(F)$ in $\mathbb{A}^2(\overline{k})$ over $k:=\mathbb{F}_{11}$ defined by $F := X^5 + Y^5 + X Y$.
The desingularization of the Zariski closure $C' = V(X^5 + Y^5 + XYZ^3)$ of $C$ in $\mathbb{P}^2$ is known to be a superspecial curve of genus $5$ over $\mathbb{F}_{11}$, see \cite[Proposition 5.1.1]{trigonal} for details.
In the following, let us compute an integral closure of $k[C] = k[X,Y]/\langle F \rangle$ of $C$ by Algorithm \ref{alg:normalization} for the inputs $S = k[X,Y]$ and $I = \langle F \rangle$:

\noindent \underline{\bf 1st loop}: 
\begin{enumerate}
\item Take the radical of the Jacobian ideal of $I $ as $J$.
We then have $I + \mathrm{Jac}(I) = \langle X^5 + Y^5 + X Y, 5 Y^4 + X, Y + 5 X^4 \rangle $, and a Gr\"{o}bner basis of $J:=\sqrt{ I + \mathrm{Jac}(I) }$ with respect to the lexicographical order with $Y \succ X$ is $\{ Y, X \}$.
Thus we also take $a:=Y \in J \smallsetminus I$.

\item Computing a generator set of $(a J + I): J$, we have $(a J + I): J = \langle Y, X^4\rangle $, which is not equal to $\langle a \rangle_S + I$.
Put $u_0 := a=Y$ and $u_1 := X^4$.

\item We first compute $\xi_0^{11}$, $\xi_{1}^{11} \in S$ such that $u_1^2 = X^8 \equiv \xi_{0}^{11} (a u_0) + \xi_{1}^{11} (a u_1) \pmod{I}$.
Note that $a u_0 = a^2 = Y^2$ and $a u_1 = Y X^4$.
A Gr\"{o}bner basis for $\mathrm{syz}(u_1^2, a u_0, a u_1, F)$ in $\bigoplus_{i=0}^3 S e_i $ with respect to the POT order with $e_0 \succ e_1 \succ e_2 \succ e_3$ is computed as
\[
\{ ( 1, Y^3 X^3, 1, 10 X^3),\ ( 0, Y^4 + X, X, 10 Y),\ (0, X^4, 10 Y, 0),\ (0,0,F,10 Y X^4) \},
\]
from the first element of which we obtain a relation
\[
u_1^2 = (-Y^3 X^3) (a u_0) + (-1) (a u_1) +(-10 X^3) F.
\]
Thus we set $\xi_{0}^{11} := -X^3 Y^3$ and $\xi_{1}^{11} := -1$.

Next we compute a generator set of $\mathrm{syz}_I(\overline{u_0}, \overline{u_1}) \subset (k[X,Y]/I)^{2}$ by Lemma \ref{lem:ModSyz}.
For this, we compute a Gr\"{o}bner basis for $\mathrm{syz}(u_0,u_1,F)$ in $\bigoplus_{i=0}^2 S e_i $ with respect to the POT order with $e_0 \succ e_1 \succ e_2$.
The computed basis is $\{ ( Y^4 + X , X, 10),\ ( 10 X^4, Y, 0),\ \{ 0, F, 10 X^4 ) \} $.
Thus, by putting $( \eta_0^{(1)}, \eta_1^{(1)} ) := ( Y^4 + X, X )$ and $( \eta_0^{(2)}, \eta_1^{(2)} ) := ( 10 X^4, Y )$, it follows from Lemma \ref{lem:ModSyz} that $\{ ( \eta_0^{(1)}, \eta_1^{(1)} ), ( \eta_0^{(2)}, \eta_1^{(2)} ) \}$ generates $\mathrm{syz}_I(\overline{u_0}, \overline{u_1}) $.

Put $S_1 := S[T] = k[X,Y,T]$ and
\begin{eqnarray}
I_1 &:=& \langle T^2 + T + Y^3 X^3 ,\  Y T + 10 X^4,\ X T + (Y^4 + X),\ F \rangle , \nonumber 
\end{eqnarray}
respectively. and go to the second loop.
\end{enumerate}

\noindent \underline{\bf 2nd loop}: 
\begin{enumerate}
\item We have that the reduced Gr\"{o}bner basis for $J_1:=\sqrt{ I_1 + \mathrm{Jac}(I_1) }$ with respect to the lexicographical order with $T \succ Y \succ X$ is $\{ 1 \}$, and thus $J_1 = S_1$.
Hence, the algorithm terminates at this loop, and outputs $(S_1,I_1)$.
\end{enumerate}

As aresult, $A_1:=S_1/I_1$ is the integral closure of $A=S/I$ in $Q(A)$, and thus $\tilde{C}:=V( I_1 ) \subset \mathbb{A}^3 = \mathrm{Spec}(\overline{k}[X,Y,T])$ is a nonsingular variety bi-rational to $C$.
\end{example}

\if 0
Here we give a method to compute a basis of $\mathrm{Hom}_{A}(\tilde{J},\tilde{J})$ as a $k[x]$-module in each loop.
Assume that a basis of $A / I$ (i.e., $\mathrm{Hom}_{A}(\tilde{J},\tilde{J})$ in the previous loop) is given as $\{ z_1, \ldots , z_n \}$.
For example, in the initial loop, we have $A / I = k[x,y]$, and its basis is $\{ 1, y, \ldots , y^{n-1} \}$.
Since any monomial of degree $2$ with respect to $t_1, \ldots , t_s$ is replaced by an element of $R$ in $R[t_1,\ldots,t_s]/I'$, every element of $A$ is written as
\[
\sum_{\nu=0}^s a_{\nu} t_{\nu}
\]
for $a_\nu \in R$.
Putting
\[
\{ z_i t_j : 1 \leq i \leq n,\ 1 \leq j \leq s \} = \{ w_1, \ldots , w_r \}
\]
with $r = n s$, we can re-write each $\sum_{\nu=0}^s \eta_{\nu}^{(k)} t_{\nu}$ as
\[
\sum_{\mu=1}^r b_{\mu}^{(k)} w_{\mu} .
\]
Thus, 
\[
R \cong \bigoplus_{i=1}^r R_0 w_i / \left\langle \sum_{\mu=1}^r b_{\mu}^{(k)} w_{\mu} : 1 \leq k \leq m \right\rangle_{R_0}
\]
as an $R_0 = k[x]$-modules.
Here we have the following short exact sequence of $R_0$-modules:
\[
0 \rightarrow \bigoplus_{k=0} R_0 w_k \rightarrow \bigoplus_{i=1}^r R_0 w_i \rightarrow R \rightarrow 0
\]
\fi

\section{Main Algorithm}\label{sec:main}

Let $k$ be a perfect field, and $C= V(F) \subset \mathbb{A}^2(\overline{k})$ an irreducible plane curve defined by an irreducible bivariate polynomial $F(X,Y) \in k[X,Y]$ of degree $\mathrm{deg}(F) = N$.
Let $k[C]:= A = k[X,Y] / \langle F \rangle$ be the coordinate ring of $C$, and $k(C)$ the rational function field of $C$ (i.e., the field of fractions for $k[C]$).
Note that $k[C] = k[x,y]$ and $k(C) = k(x,y)$ with $x := X \bmod{\langle F \rangle}$ and $y:= Y \bmod{\langle F \rangle}$ ($k(C)$ satisfies $\mathrm{trans}.\ \mathrm{deg}_k k(C) = 1$).
In the following, we describe a method to compute a basis of $H^0 (\tilde{C}, \varOmega_{\tilde{C}}^1)$ as a $k$-vector space, where $\tilde{C}$ is the desingularization of the Zariski closure $C'$ in $\mathbb{P}^2$ of $C$.
By Lemma \ref{lem:valid} below, we may assume the following:
\begin{itemize}
\item[{\bf (A1)}] $F(X,Y)$ is a monic polynomial in $Y$ with $\mathrm{deg}_Y F = N$.
Hence $k[x,y]$ is integral over $k[x]$.
\item[{\bf (A2)}] $y$ is a simple root of $f(Y) := F(x,Y) \in k[x][Y]$.
Hence $K'=k(x,y)\cong k(x)(y)$ is separable (and algebraic) over $k(x)$.
\end{itemize}
Putting $R = k[x]$ and $K=k(x)$, we have that $R$ is integrally closed in $K$ (since $R$ is a UFD).
It follows from the above two assumptions that $K'=k(x,y)$ is a finite separable extension of $K$.
Thus, denoting by ${\overline R}=\overline{k[x]}$ the integral closure of $R$ in $K'$, we have ${\overline R} = \overline{k[x,y]}$ since $k[x,y]$ is integral over $R=k[x]$.
As we described in Remark \ref{rem:Gorenstein}, it suffices for computing $H^0 (\tilde{C}, \varOmega_{\tilde{C}})$ to present an algorithm for computing 
\[
\mathfrak{C}_A^{(N-3)} := \left\{ \phi \in \mathfrak{C}_A : \mathrm{deg}(\phi) \leq N-3 \right\},
\]
where $\mathfrak{C}_A$ denotes the conductor of $A$ in $\overline{A}$.
We also have
\begin{equation}\label{eq:conductor}
\mathfrak{C}_A = \mathfrak{f}(\overline{R}/R[y])  = F_y \mathcal{C}_{\overline{k[x,y]}/k[x]} 
\end{equation}
by Theorem \ref{cor:Mnuk}.
Moreover, ${\overline R} = \overline{k[x,y]}$ is a free $k[x]$-module of rank $N$, by Lemma \ref{lem:PID}.
Thus, once a basis $\{ w_1, \ldots , w_N \}$ of $\overline{k[x,y]}$ as a $k[x]$-module is computed, then a basis of $\mathcal{C}_{\overline{k[x,y]}/k[x]}$ as a $k[x]$-module is also computed as its dual basis $\{ w_1^{\ast}, \ldots , w_N^{\ast} \}$, by Lemmas \ref{lem:dual_basis} and \ref{lem:dual_basis2}.
Then $\{ F_y \cdot w_i^{\ast} : 1 \leq i \leq N \}$ is a basis of $\mathfrak{C}_A $ as a $k[x]$-module.
This is also a generator set of $\mathfrak{C}_A$ as an ideal in $k[x,y]$, and hence a basis of the $k$-vector space $\mathfrak{C}_A^{(N-3)}$
is computed by the Gr\"{o}bner basis computation together with the row reduction to a Macaulay matrix, see Step B in Algorithm \ref{alg:main} below for details.

\subsection{Main algorithm}\label{subsec:main}
Here, we explicitly write down a concrete algorithm:

\begin{algorithm}[Main algorithm]\label{alg:main}
~
\begin{enumerate}
\item[{\it Input}:] An irreducible polynomial $F(X,Y) \in S=k[X,Y]$ of $\mathrm{deg}(F) = \mathrm{deg}_Y F = N$, which is monic and separable in $Y$.
\item[{\it Output}:] A basis of the truncated conductor $\mathfrak{C}_A^{(N-3)}$ as a $k$-vector space.
\end{enumerate}

Let $A = S/I$ with $I = \langle F \rangle_S$.

\noindent \underline{\bf Step A}:
Compute an explicit basis of the conductor $\mathfrak{C}_{A}$
as a $k[x]$-module, by the following three sub-steps (A-1), (A-2) and (A-3). 

\noindent \underline{{\bf Step (A-1)}: Computation of the normalization $\overline{R}$}: 

Putting $A_0 := A = S /I$, execute Algorithm \ref{alg:normalization} to compute an increasing chain of integral extensions from $A_0$ to the integral closure $\overline{A}$, say $A_0 \subset A_1 \subset \cdots \subset A_n = \overline{A}$ with $S_i = S_{i-1}[T_{i,1}, \ldots , T_{i,s_i}]$, where $T_{i,j}$'s are variables newly added in (3) of Algorithm \ref{alg:normalization} at the $i$-th loop.
Through the execution of Algorithm \ref{alg:normalization}, we also obtain the following elements:
\begin{itemize}
\item $P_i$ : A generator set of the ideal $I_{i}$ in $S_i$ corresponding to $\tilde{I}_i$ generated by \eqref{eq:GenSet} at the $i$-th loop. 
\item $Q_i$ : The set of quadratic polynomials in $P_i \subset S_i = S_{i-1}[T_{i,1}, \ldots , T_{i,s_i}]$.
\item $\phi_i : Q(A_i) \rightarrow Q( A_{i-1})$ : A homomorphism between the fields of fractions $Q(A_i)$ and $Q(A_{i-1})$ for $A_i$ and $A_{i-1}$, which is induced from the ring isomorphism $A_i \rightarrow (1/a_i) (a_i \tilde{J}_i : \tilde{J}_i) \subset Q(A_{i-1}) \ ; \ T_{i,j} \bmod{I_i} \rightarrow u_{i,j}/a_i$ given as in Lemma \ref{lem:normal} (3).
Here $(\tilde{J}_i, a_i)$ is the test pair of the $i$-th loop of Algorithm \ref{alg:normalization}, and $\{ u_{i,0} = a_i, u_{i,1}, \ldots , u_{i,s_i} \}$ is a generator set of $(a_i \tilde{J}_i : \tilde{J}_i) $ as in Lemma \ref{lem:normal}.
\end{itemize}
We set $\phi = \phi_1 \circ \cdots \circ \phi_n$.
Put $\mathcal{B}_i := \{ 1, t_{i,1}, \ldots , t_{i,s_i} \} \subset S_i / I_i$ with $t_{i,j} := T_{i,j} \bmod{I_i}$ for each $1 \leq i \leq n$, and
\begin{equation}\label{eq:gen}
\mathcal{B} := \{ t_1 \cdots t_{n} y^j : t_i \in \mathcal{B}_i, \ 0 \leq j \leq N-1 \} \subset S_n / I_n.
\end{equation}
We also order elements of $\mathcal{B}$, say write $\mathcal{B} = \{ B_1, \ldots , B_r \}$ with $r = \# \mathcal{B}$.

\noindent \underline{{\bf Step (A-2)}: Finding a basis of $\overline{R}$ as a $k[x]$-module}: 

For this, we proceed with the following three procedures:
\begin{enumerate}
\item[{\bf (A-2-1)}] Compute a generator set $\{ U_1, \ldots , U_m \} \subset  \bigoplus_{i=1}^r k[X] $ of
\begin{equation}\label{eq:eliminated_modulo_syzygy}
\mathrm{syz}_{I} (\mathcal{B}) \cap \bigoplus_{i=1}^r k[X]  = \left\{ (a_1, \ldots , a_r) \in \bigoplus_{i=1}^r k[X]  : \sum_{i=1}^r a_i B_i = 0  \mbox{ in $S_n/I_n$} \right\}
\end{equation}
as a $k[X]$-module, via the computation of Gr\"{o}bner bases and syzygy modules (details will be explained in Subsection \ref{subsec:details} below). 
\item[{\bf (A-2-2)}] Letting $U$ be the $(m \times r)$-matrix over $k[X]$ whose $i$-th row vector is $U_i$ for $1 \leq i \leq m$, compute the Smith normal form $\tilde{U}$ of $U$ together with matrices $P \in \mathrm{GL}_m (k[X])$ and $Q \in \mathrm{GL}_r(k[X])$ such that $\tilde{U} = P U Q$.
Since $S_n/I_n$ has rank $N$, the Smith normal form $\tilde{U}$ is of the form
\begin{equation}\label{eq:formU}
\tilde{U}=
\begin{pmatrix}
E_{r-N} & 0\\
0 & O_{m-(r-N), N}
\end{pmatrix},
\end{equation}
where $E_{r-N}$ (resp.\ $O_{m-(r-N),N}$) denotes the identity matrix of size $N$ (resp.\ the $(m-(r-N)) \times N$ zero matrix).
\item[{\bf (A-2-3)}] Compute $Q^{-1} \in \mathrm{GL}_r(k[X])$.
Let $W_i$ be the $(r-N+i)$-th row vector of $Q^{-1}$ for each $1 \leq i \leq N$, and put $\mathcal{W} := \{ w_i := W_i \cdot {}^t (B_1, \ldots , B_r) : 1 \leq i \leq N \}$.
\end{enumerate}

\noindent \underline{{\bf Step (A-3)}: Computation of a basis of the conductor $\mathfrak{C}_A$}: 

This step consists of the following three sub-steps:
\begin{enumerate}
\item[{\bf (A-3-1)}] For each $1 \leq i , j \leq N$, compute $\mathrm{Tr}_{k(x,y)/k(x)}(w_i w_j) \in k[x]$.
See Subsection \ref{subsec:details_3} for details.
\item[{\bf (A-3-2)}] Compute $( \mathrm{Tr}_{k(x,y)/k(x)}(w_i w_j ) )^{-1} \in \mathrm{GL}_N(k(x))$, and put
{\small
\[
\begin{pmatrix}
w_1^{\ast}\\
\vdots \\
w_N^{\ast} 
\end{pmatrix}
:=
\left( \mathrm{Tr}_{k(x,y)/k(x)}(w_i w_j ) \right)^{-1} \cdot 
\begin{pmatrix}
w_1 \\
\vdots \\
w_N
\end{pmatrix}.
\]
}
Note that $w_k^{\ast} \in k(x)[y,\{ t_{i,j} \}]$ for each $1 \leq k \leq N$.
\item[{\bf (A-3-3)}] For each $1 \leq i \leq N$, compute $\phi (w_i^{\ast} ) \in k(x,y)$, namely evaluate $u_{i,j}/u_{i,0}$ to $T_{i,j}$ successively, and compute $\mathcal{F} := \{ F_y \cdot \phi (w_i^{\ast}) : 1 \leq i \leq N \} \subset k[x,y]$.
\end{enumerate}

\noindent \underline{\bf Step B}: 
In this step, we compute a basis of $\mathfrak{C}_A^{(N-3)} := \{ \phi \in \mathfrak{C}_A : \mathrm{deg}(\phi) \leq N-3 \}$ as a $k$-vector space, by the following three procedures:
\begin{enumerate}
\item[{\bf (B-1)}] Compute a Gr\"{o}bner basis $\mathcal{G}$ of the ideal in $S=k[X,Y]$ corresponding to $\mathfrak{C}_A = \langle \mathcal{F} \rangle_A \subset A = S/I$, with respect to a graded monomial order on $S$.
\item[{\bf (B-2)}] Generate the set $\mathcal{S} := \{ m g : m \in \mathrm{Mon}(S), \ g \in \mathcal{G},\ \mathrm{deg}(mg) \leq N-3\}$, and construct its Macaulay matrix $M$ with respect to the set of all monomials in $S$ of degree $\leq N-3$.
\item[{\bf (B-3)}] Compute the (reduced) row echelon form $\mathrm{ref}(M)$ of $M$, and output the set of polynomials in $k[X,Y]$ corresponding to $\mathrm{ref}(M)$.
\end{enumerate}
\end{algorithm}


\subsection{Validity of the input}\label{subsec:validity}

Let $C_0=V(F_0)$ be a plane affine curve defined by $F_0 (X,Y) = 0$ for $F_0 \in k[X,Y]$ with $\mathrm{deg}(F_0)=D$, and assume that $F_0$ is irreducible over the algebraic closure of $k$. 
Note that we may assume $\partial F_0 / \partial Y \neq 0$, and $F_0 \notin k[Y]$ (by taking a $k$-linear coordinate change of $X$ and $Y$ if necessary).
In this subsection, we show the validity of the input of Algorithm \ref{alg:main}, more precisely, there exists a constructive isomorphism $k[C_0] \cong k[C_1]$ for some $C_1 = V(G)$ with $G(X',Y') \in k[X',Y']$ such that $G$ satisfies the same assumptions as {\bf (A1)} and {\bf (A2)} for $F$. 

\begin{lemma}\label{lem:valid}
For each integer $a \geq 1$, consider the following $k$-homomorphism
\[
\varphi_a : k [X',Y'] \rightarrow k[x,y] \ ; \ H(X',Y') \mapsto H(x-y^a,y),
\]
where $x := X \bmod{F_0}$ and $y := Y \bmod{F_0}$.
Put $G_a(X',Y') := F_0(X'+(Y')^a, Y')$.
Then we have the following:
\begin{enumerate}
\item $\varphi_a$ is surjective, in particular $\varphi_a(G_a) = F_0(x,y)$.
\item $\mathrm{Ker} (\varphi_a) = \langle G_a \rangle$, and thus $k[x',y'] = k[X',Y']/\langle G_a \rangle$ is isomorphic to $k[x,y]$, where $x' := X' \bmod{G_a}$ and $y' := Y' \bmod{G_a}$.
\item For every $a > \mathrm{deg}(F_0)$ with $a \equiv 0 \bmod{p}$,
\begin{enumerate}
\item[{\bf (A1)'}] $G_a$ is a monic polynomial in $Y'$ with $\mathrm{deg} (G_a) =\mathrm{deg}_{Y'} (G_a)$, and
\item[{\bf (A2)'}] $y'$ is a simple root of $G_a(x',Y') \in k[x'][Y']$.
\end{enumerate}
Hence $\{ x' \}$ is a separating transcendental basis of $k(x',y')$ over $k$.
\end{enumerate}
\end{lemma}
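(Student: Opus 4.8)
The plan is to dispatch (1) and (2) formally and then extract (3) from two short computations, the hypotheses $a>\deg(F_0)$ and $p\mid a$ entering only in (3). For (1): $y=\varphi_a(Y')$ lies in the image, hence so does $x=\varphi_a(X')+y^a=\varphi_a(X'+(Y')^a)$, and since $x,y$ generate $k[x,y]$ over $k$ the map $\varphi_a$ is surjective; substituting directly, $\varphi_a(G_a)=F_0((x-y^a)+y^a,y)=F_0(x,y)=0$. For (2): let $\Psi:k[X,Y]\to k[X',Y']$ be the $k$-algebra isomorphism with $X\mapsto X'+(Y')^a$ and $Y\mapsto Y'$ (inverse $X'\mapsto X-Y^a$, $Y'\mapsto Y$); by construction $\Psi(F_0)=G_a$, and $\varphi_a$ is precisely the composite of $\Psi^{-1}$ with the quotient map $k[X,Y]\to k[x,y]$, so $\mathrm{Ker}(\varphi_a)=\Psi(\langle F_0\rangle)=\langle G_a\rangle$. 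The induced isomorphism $k[X',Y']/\langle G_a\rangle\cong k[x,y]$ then sends $x'\mapsto x-y^a$ and $y'\mapsto y$, an identification I use freely in what follows.

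For (A1)' I would write $F_0=\sum_{i+j\le D}c_{ij}X^iY^j$ with $D=\deg(F_0)$, so that $G_a$ lies in the $k$-span of the monomials $(X')^{i-l}(Y')^{al+j}$ with $c_{ij}\ne0$ and $0\le l\le i$. Such a monomial has $Y'$-degree $al+j\le ai+j$, with equality exactly when $l=i$, that is, when it is a pure power of $Y'$. The crucial point, forced by $a>D$, is that the integers $ai+j$ with $i+j\le D$ are pairwise distinct: a coincidence $ai_1+j_1=ai_2+j_2$ would force $a\mid(j_2-j_1)$ with $|j_2-j_1|\le D<a$, hence $(i_1,j_1)=(i_2,j_2)$. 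Therefore the unique monomial of maximal $Y'$-degree in $G_a$ is the pure power $(Y')^{M}$ with $M:=\max\{ai+j:c_{ij}\ne0\}$, and its coefficient is the corresponding $c_{ij}\in k^{\times}$; the same bookkeeping shows $\deg(G_a)=M$ as well. Hence $G_a$ has $Y'$-degree equal to its total degree and a nonzero constant as $Y'$-leading coefficient, so after absorbing that constant we may take $G_a$ monic in $Y'$, which is (A1)'.

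For (A2)' I would differentiate by the chain rule: $\partial G_a/\partial Y'=(\partial F_0/\partial X)(X'+(Y')^a,Y')\cdot a(Y')^{a-1}+(\partial F_0/\partial Y)(X'+(Y')^a,Y')$, and the first summand vanishes because $p\mid a$. Evaluating at $(x',y')=(x-y^a,y)$ via the isomorphism of (2) gives $(\partial G_a/\partial Y')(x',y')=(\partial F_0/\partial Y)(x,y)$, which is nonzero in $k[x,y]$: by hypothesis $\partial F_0/\partial Y\ne0$, it has degree $<D$, and $F_0$ is irreducible over $\overline{k}$, so $F_0\nmid\partial F_0/\partial Y$. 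Thus $y'$ is a simple root of $G_a(x',Y')\in k[x'][Y']$. Finally, $G_a$ being monic of positive degree in $Y'$ makes $y'$ integral over $k[x']$, so $k(x',y')$ is algebraic over $k(x')$; since $k(x',y')$ has transcendence degree $1$ over $k$ this forces $x'$ to be transcendental over $k$, and simplicity of the root yields separability of $k(x',y')/k(x')$, so $\{x'\}$ is a separating transcendence basis.

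The step I expect to require the most care is (A1)' — more precisely the no-cancellation argument that pins down the $Y'$-leading term of $G_a$ — since it is the only place the hypothesis $a>\deg(F_0)$ is used and one must genuinely rule out accidental collisions among the exponents $ai+j$. By contrast (A2)' collapses to a single line as soon as one observes that $p\mid a$ annihilates the chain-rule contribution involving $\partial F_0/\partial X$.
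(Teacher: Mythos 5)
Your proposal is correct and follows essentially the same route as the paper: (1) and (2) are dispatched formally via the coordinate-change automorphism, (A1)' rests on the same no-collision argument that the exponents $ai+j$ are pairwise distinct when $a>\deg(F_0)$, and (A2)' uses the same chain-rule computation in which $p\mid a$ kills the $\partial F_0/\partial X$ term. You supply slightly more detail than the paper on the surjectivity/kernel claims and on the final "separating transcendence basis" assertion, but the substance is identical.
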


\begin{proof}
Since (1) and (2) are straightforward from the construction of $\varphi_a$, we here prove (3).
For each $(i,j)$, the highest total degree-term of $(X' + (Y')^a)^i (Y')^j$ is $(Y')^{ai + j} $, and thus
\[
G_a = \sum_{i,j} c_{i,j} (Y')^{a i + j} + H(X',Y'),
\]
where the degree in $X'$ of each term of $H(X',Y')$ is $\geq 1$, and where the total degree of $H$ is less than the maximal value of $a i + j$.
Choosing $a$ to be such that $a > \mathrm{deg}(F_0)$, we have $a i + j \neq a i' + j'$ if $(i,j) \neq (i',j')$.
Indeed, if $a i+j = a i'+j' $, then $j = j'$ by $a > \mathrm{deg}(F_0) \geq j, j'$ and thus $i=i'$.
Hence the leading coefficient of $G_a$ in $Y'$ is $c_{i,j} \neq 0$ for $(i,j)$ maximizing $ai+j$, and thus {\bf (A1)'} holds for an arbitrary $a > \mathrm{deg}(F_0)$.

It follows from $\partial F_0 / \partial Y \neq 0$ that $(\partial F_0 / \partial Y) (x, y) \neq 0$, i.e., $\partial F_0 / \partial Y \in \langle F_0 \rangle$ does not occur, by considering the total degrees of $\partial F_0 / \partial Y$ and $F_0$.
We here show $(\partial G_a / \partial Y') (x',y') \neq 0$, i.e, $\varphi_a (\partial G_a / \partial Y') \neq 0$.
Since
\begin{equation}\label{eq:partial}
\frac{\partial G_{a}}{\partial Y'} (x',y')= a (y')^{a - 1} \frac{\partial F_0}{\partial X} ( x' + (y')^{a}, y') + \frac{\partial F_0}{\partial Y} (x' + (y')^a, y'),
\end{equation}
we have $(\partial G_a / \partial Y') (x',y')  =( {\partial F_0}/{\partial Y}) (x' + (y')^a, y') $, by taking $a$ to be a multiple of $p$, and hence $\varphi_a (\partial G_a / \partial Y') = (\partial F_0 / \partial Y) (x,y) \neq 0$.
Thus {\bf (A2)'} holds for such an $a$.
\end{proof}

\begin{remark}
If $k$ satisfies $\# k > D+1$ with $D := \mathrm{deg}(F_0)$, then $\varphi_{a}$ in Lemma \ref{lem:valid} can be replaced by a linear transformation as follows:
For each $\lambda \in k$, consider the following $k$-homomorphism
\[
\varphi_{\lambda} : k [X',Y'] \rightarrow k[x,y] \ ; \ H(X',Y') \mapsto H(x-\lambda y,y).
\]
Put $G_\lambda (X',Y') := F_0(X'+\lambda Y', Y')$.
Clearly $\varphi_{\lambda}$ is surjective with $\varphi_{\lambda}(G_{\lambda}) = F_0 (x,y)$ and $\mathrm{Ker}(\varphi_{\lambda}) = \langle G_{\lambda} \rangle$, and thus $k[x',y'] = k[X', Y']/ \langle G_{\lambda} \rangle$ is isomorphic to $k[x,y]$ via a homomorphism canonically induced from $\varphi_{\lambda}$.
Similarly to the case of $G_a$, we have
\[
G_{\lambda} = \left( \sum_{i+j=D} c_{i,j} \lambda^i \right) (Y')^D + H(X',Y') = \left( \sum_{i=0}^{D} c_{i,D-i} \lambda^i \right) (Y')^{D} + H(X',Y'),
\]
where the degree in $Y'$ of each term of $H(X',Y')$ is $\leq D-1$, and where the total degree of $H$ is less than or equal to $D$.
Since $\# k > D$, there exists $\lambda \in k^{\times}$ such that $\sum_{i=0}^{D} c_{i,D-i} \lambda^i \neq 0$, and thus $G_{\lambda}$ is monic in $Y'$ for such a $\lambda$.

We also show that for a suitable $\lambda$, we have $(\partial G_{\lambda}/ \partial Y' ) (x',y') \neq 0$, that is, $\varphi_{\lambda} (\partial G_{\lambda}/ \partial Y') \neq 0$.
It follows from 
\[
(\partial G_{\lambda} / \partial Y') (X',Y')  = \lambda ({\partial F_0}/{\partial X} ) (X' + \lambda Y', Y') + ({\partial F_0}/{\partial Y}) (X' + \lambda Y', Y') 
\]
that $\varphi_{\lambda} (\partial G_{\lambda} / \partial Y') = \lambda ({\partial F_0}/{\partial X} ) (x,y) + ({\partial F_0}/{\partial Y}) (x,y) $.
Thus, the number of $\lambda$ with $\varphi_{\lambda} (\partial G_{\lambda} / \partial Y')  = 0$ is at most one.
This together with $\# k > D+1$ implies that there exists $\lambda \in k^{\times}$ such that $\sum_{i=0}^D c_{i,D-i} \lambda^i \neq 0$ and $(\partial G_{\lambda}/ \partial Y' ) (x',y') \neq 0$, as desired.
\end{remark}

\subsection{Correctness of the main algorithm}\label{subsec:correctness}
In this subsection, we prove the correctness of Algorithm \ref{alg:main} by examining that of each step.

\noindent \underline{{\bf Step (A-1)}: Computation of the normalization $\overline{R}$}: 

This step just executes Algorithm \ref{alg:normalization} for the input $(S,I)$ with $S =k[X,Y]$ and $I = \langle F \rangle$.
It follows from the following lemma that the set $\mathcal{B}$ given in \eqref{eq:gen} generates $\overline{A} = S_n / I_n$ with $A = S/I$ over $k[x]$:

\begin{lemma}
With notation same as in Step (A-1) of Algorithm \ref{alg:main}, each $S_i/I_i$ is generated by $\mathcal{B}_i := \{ 1, t_{i,1}, \ldots , t_{i,s_i} \}$ as an $S_{i-1}/I_{i-1}$-module, and it is a free $k[x]$-module of rank $N$.
\end{lemma}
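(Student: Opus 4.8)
The plan is to prove the two assertions in turn, by induction on $i$, using the structure of Algorithm \ref{alg:normalization} together with Lemma \ref{lem:normal} (3). For the first assertion, observe that by construction $S_i/I_i \cong \mathrm{Hom}_{A_{i-1}}(\tilde J_i,\tilde J_i) \cong (1/a_i)(a_i\tilde J_i:\tilde J_i)$ as $A_{i-1} := S_{i-1}/I_{i-1}$-algebras, where the isomorphism $\phi_i$ sends $t_{i,j} = T_{i,j} \bmod I_i$ to $u_{i,j}/a_i$ for $1\le j\le s_i$, and $t_{i,0} := 1$ to $u_{i,0}/a_i = 1$. Since $\{u_{i,0}=a_i,u_{i,1},\ldots,u_{i,s_i}\}$ generates the ideal $a_i\tilde J_i:\tilde J_i$ of $A_{i-1}$, the elements $u_{i,j}/a_i$ generate $(1/a_i)(a_i\tilde J_i:\tilde J_i)$ as an $A_{i-1}$-module. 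Transporting back along $\phi_i^{-1}$, this says exactly that $\mathcal B_i = \{1,t_{i,1},\ldots,t_{i,s_i}\}$ generates $S_i/I_i$ as an $A_{i-1} = S_{i-1}/I_{i-1}$-module, which is the claim.

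For the statement that $S_i/I_i$ is a free $k[x]$-module of rank $N$, I would proceed as follows. First, $A_0 = S/I = k[x,y]$ with the single relation $F(x,Y)=0$; by assumption {\bf (A1)} $F$ is monic of degree $N$ in $Y$, so $A_0$ is a free $k[x]$-module of rank $N$ with basis $\{1,y,\ldots,y^{N-1}\}$. Next, since each $A_i = S_i/I_i$ is an intermediate ring $A_0 \subset A_1 \subset \cdots \subset A_n = \overline A$ with every $A_i$ integral over $k[x]$ and contained in the function field $k(C) = K' = k(x,y)$, each $A_i$ is a torsion-free $k[x]$-module contained in $\overline A = \overline{k[x]}$. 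By Lemma \ref{lem:PID}, $\overline A = \overline{k[x]}$ is a free $k[x]$-module of rank $N = [K':K]$. A finitely generated torsion-free module over the PID $k[x]$ is free, and a free submodule $A_i$ of the rank-$N$ free module $\overline A$ that spans the same $K$-vector space $K'$ (which it does, since $A_0 \subseteq A_i$ already spans $K'$ over $K$) necessarily has rank $N$. It remains only to note that each $A_i$ is finitely generated over $k[x]$: this follows by induction using the first part of the lemma, since $\mathcal B_i$ is a finite $A_{i-1}$-generating set and $A_{i-1}$ is finitely generated over $k[x]$ by induction hypothesis; concretely, $\{t_1\cdots t_i\, y^j : t_\ell \in \mathcal B_\ell\ (1\le\ell\le i),\ 0\le j\le N-1\}$ is a finite $k[x]$-generating set of $A_i$, specializing to $\mathcal B$ of \eqref{eq:gen} when $i=n$.

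\textbf{Main obstacle.} The routine points — freeness of a torsion-free module over $k[x]$, and the $A_{i-1}$-module generation coming directly from Lemma \ref{lem:normal} (3) — are essentially immediate. The one place that needs a little care is verifying that $A_i$ is genuinely an intermediate ring $A_0 \subseteq A_i \subseteq \overline A$ inside the \emph{fixed} field $K' = k(x,y)$, rather than merely an abstract ring receiving a map: one must track that the isomorphism $A_i \xrightarrow{\ \sim\ } (1/a_i)(a_i\tilde J_i:\tilde J_i) \subset Q(A_{i-1}) = K'$ of Lemma \ref{lem:int3} (3)–(4) and Lemma \ref{lem:normal} (3) identifies $A_i$ with a subring of $K'$ containing $A_{i-1}$, and that $\phi = \phi_1\circ\cdots\circ\phi_n$ is the corresponding embedding of $\overline A$ into $k(C)$; this is what lets us apply Lemma \ref{lem:PID} to bound the rank by $N$. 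Once this identification is in place, the rank is pinned down from both sides — at least $N$ because $A_0\subseteq A_i$ already has rank $N$, at most $N$ because $A_i\subseteq\overline A$ which has rank $N$ — so the proof concludes.
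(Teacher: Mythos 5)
Your proof is correct. For the second assertion (freeness of rank $N$ over $k[x]$) you follow essentially the same route as the paper --- sandwiching $A_i$ between $A_0=k[x,y]$ and $\overline{A}$, both free of rank $N$ over the PID $k[x]$ --- just with more detail (explicit finite generation by induction, and the remark that one must really identify $A_i$ with a subring of the fixed field $K'$ via $\phi_i$; the paper simply writes $S/I\subset S_i/I_i\subset S_n/I_n$ and leaves this implicit). For the first assertion, however, you argue differently: you transport the generation statement through the isomorphism $S_i/I_i\cong (1/a_i)(a_i\tilde J_i:\tilde J_i)$ of Lemma \ref{lem:normal}~(3), noting that an ideal of $A_{i-1}$ generated by $u_{i,0},\dots,u_{i,s_i}$ is in particular generated by these elements as an $A_{i-1}$-module, so dividing by $a_i$ gives generation of the target by $1,u_{i,1}/a_i,\dots,u_{i,s_i}/a_i$. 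The paper instead argues syntactically on the presentation: the generators $T_{i,j}T_{i,k}-\ell_{i,j,k}$ of $\tilde I_i$ from \eqref{eq:GenSet}, with $\ell_{i,j,k}$ linear in the $T_{i,j}$, let one rewrite any monomial of degree $\ge 2$ in the new variables, so every residue class is an $S_{i-1}/I_{i-1}$-combination of $1,t_{i,1},\dots,t_{i,s_i}$. Your version is more conceptual and arguably cleaner (it does not require inspecting the shape of the relations), at the cost of invoking the identification of $S_i/I_i$ with a fractional ideal of $A_{i-1}$; the paper's version stays entirely inside the polynomial presentation. Both are valid.
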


\begin{proof}
The former claim follows immediately from the construction of $S_i /I_i$:
Recall from \eqref{eq:GenSet} that each $I_i$ contains elements of the form $T_{i, j} T_{i, k} - \ell_{i,j,k}$ for all $1 \leq j \leq k \leq s_i$, where $\ell_{i,j,k} \in S_{i-1}[T_{i,1}, \ldots , T_{i,s_i}]$ is linear in $T_{i,j}$ with $1 \leq j \leq s_i$.
This means that any element in $S_i$ modulo $I_i$ has no term whose degree in $T_{i,j}$ with $1 \leq j \leq s_i$ is greater than $1$, as desired.

As for the latter claim, we have $S/I \subset S_i /I_i \subset S_n/I_n = \overline{k[x,y]}$, and recall from the beginning of this section that  $\overline{k[x,y]}$ is a free $k[x]$-module of rank $N$.
Since both $S/I = k[x,y]$ and $\overline{k[x,y]}$ are of rank $N$ over the PID $k[x]$, the intermediate module $S_i /I_i$ also has rank $N$.
\end{proof}

\noindent \underline{{\bf Step (A-2)}: Finding a basis of $\overline{R}$ as a $k[x]$-module}: 

This step first computes a generator set $\mathcal{U} = \{ U_1, \ldots , U_m \} \subset  \bigoplus_{i=1}^r k[X] $ of \eqref{eq:eliminated_modulo_syzygy} as a $k[X]$-module, by Algorithm \ref{alg:elimination} in Subsection \ref{subsec:details} below. 
After computing $\mathcal{U}$, we then construct the $(m \times r)$-matrix $U$ over $k[X]$ whose $i$-th row vector is $U_i$ for $1 \leq i \leq m$, and compute its Smith normal form $\tilde{U}$ together with matrices $P \in \mathrm{GL}_m (k[X])$ and $Q \in \mathrm{GL}_r(k[X])$ such that $\tilde{U} = P U Q$.
We here show the following:

\begin{lemma}\label{lem:step2}
With notation same as in Step (A-2) of Algorithm \ref{alg:main}, the set $\mathcal{W} $ computed in (A-2-3) is a basis of $S_n / I_n$ as a free $k[x]$-module.
\end{lemma}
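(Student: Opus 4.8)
The plan is to unwind the linear algebra over the PID $k[x]$ and show that the row reduction encoded by $Q^{-1}$ extracts precisely a basis of the free module $\overline{R} = S_n/I_n$ from the generating set $\mathcal{B} = \{B_1,\ldots,B_r\}$. First I would recall the setup: $\mathcal{B}$ generates $S_n/I_n$ as a $k[x]$-module (by the preceding lemma), so there is a surjection $\pi : \bigoplus_{i=1}^r k[X]\, e_i \twoheadrightarrow S_n/I_n$ sending $e_i \mapsto B_i$. The module $\eqref{eq:eliminated_modulo_syzygy}$, i.e.\ $\mathrm{syz}_I(\mathcal{B})\cap\bigoplus_i k[X]$, is exactly $\ker\pi$, and $\mathcal{U} = \{U_1,\ldots,U_m\}$ generates it. Thus we have an exact sequence of free $k[x]$-modules
\[
k[X]^m \xrightarrow{\ U\ } k[X]^r \xrightarrow{\ \pi\ } S_n/I_n \to 0,
\]
where $U$ is the matrix whose rows are the $U_i$, so that $\mathrm{Im}(U^{\mathrm{op}}) = \ker\pi$ viewed as the row span — more precisely, $\ker\pi$ is the $k[x]$-submodule of $k[X]^r$ generated by the rows of $U$, i.e.\ the image of the map $k[X]^m \to k[X]^r$, $v \mapsto v U$.

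Next I would invoke the Smith normal form: writing $\tilde U = P U Q$ with $P\in\mathrm{GL}_m(k[X])$, $Q\in\mathrm{GL}_r(k[X])$ and $\tilde U$ of the shape $\eqref{eq:formU}$, the change of basis $Q$ on $k[X]^r$ transports $\ker\pi$ to the submodule generated by the rows of $\tilde U P^{-1}$; since $P^{-1}$ is invertible, this is the same as the submodule generated by the rows of $\tilde U$, which is $\bigoplus_{i=1}^{r-N} k[X]\, f_i$ where $f_1,\ldots,f_r$ denotes the new basis coming from $Q$ (the columns of $Q$, equivalently the rows of $Q^{-1}$ give the dual picture). Concretely: the map $e_i \mapsto \sum_j (Q^{-1})_{ij} B_j$ is just $\pi$ re-expressed in the $Q$-basis, and its kernel is spanned by the first $r-N$ new basis vectors; hence $\pi$ restricted to the last $N$ coordinates of the $Q$-basis is an isomorphism onto $S_n/I_n$. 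Therefore the images of those last $N$ basis vectors, namely $w_i := W_i\cdot{}^t(B_1,\ldots,B_r)$ with $W_i$ the $(r-N+i)$-th row of $Q^{-1}$, form a $k[x]$-basis of $S_n/I_n$. I should also note that the rank is indeed $N$: this is guaranteed by the earlier observation that $\overline{k[x,y]}$ is free of rank $N$ over $k[x]$ (Lemma~\ref{lem:PID}), which justifies the stated shape $\eqref{eq:formU}$ of $\tilde U$ and in particular that exactly $r-N$ invariant factors appear and all equal $1$.

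The one point requiring a little care — and the main obstacle — is bookkeeping the passage between "row span of $U$" and "the submodule cut out after applying $Q$", i.e.\ correctly tracking which side ($P$ on the left, acting on generators; $Q$ on the right, acting on the ambient basis) does what, and verifying that all invariant factors are units rather than merely that there are $r-N$ of them. The invariant-factors-are-units claim follows because $S_n/I_n = (k[X]^r)/\ker\pi$ is torsion-free (being a submodule of the field $k(C)^{\oplus}$, or simply free of rank $N$), so the cokernel of $U$ has no torsion, forcing every nonzero diagonal entry of $\tilde U$ to be a unit; combined with the rank count $N$ from Lemma~\ref{lem:PID} this pins down $\tilde U$ to the stated block form. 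I would close by remarking that $\mathcal{W}$ being a $k[x]$-basis of $\overline{k[x,y]}=\overline{R}$ is exactly what the later steps (A-3-1)–(A-3-3) need in order to apply Lemmas~\ref{lem:dual_basis} and~\ref{lem:dual_basis2} to produce the dual basis and hence a basis of $\mathfrak{C}_A$.
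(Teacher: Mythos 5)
Your argument is correct and is essentially the paper's own proof: both present $S_n/I_n$ as the cokernel of $f_U\colon \bigoplus_{i=1}^m k[X] \to \bigoplus_{i=1}^r k[X]$ (with $\ker\varphi_0$ the row span of $U$), apply the Smith normal form $\tilde U = PUQ$, and conclude that the rows $W_1,\ldots,W_N$ of $Q^{-1}$ map under $\varphi_0$ to a basis; your extra remark that torsion-freeness of $S_n/I_n$ forces all invariant factors to be units is a welcome justification of the shape \eqref{eq:formU} that the paper only asserts. (One trivial slip: the image of the row span of $U$ under right-multiplication by $Q$ is the row span of $P^{-1}\tilde U$, not $\tilde U P^{-1}$ --- the latter is not even defined when $m\neq r$ --- but your conclusion that this row span equals that of $\tilde U$ is correct.)
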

\begin{proof}
Let $e_i$ denote a row vector with $1$ in the $i$-th coordinate and $0$'s elsewhere.
Since the set \eqref{eq:eliminated_modulo_syzygy} is nothing but the kernel of the $k[X]$-homomorphism $\varphi_0 : \bigoplus_{i=1}^r k[X] \rightarrow S_n/I_n \ ; \ e_i \mapsto B_i$, we have the following commutative diagram:
\[
\xymatrix{
\bigoplus_{i=1}^m k[X]  \ar[r]^{f_U}  & \bigoplus_{i=1}^r k[X]  \ar[r]^{\varphi_0} \ar[d]^{f_Q} & S_n/I_n \\
\bigoplus_{i=1}^m k[X]  \ar[r]^{f_{\tilde{U}}} \ar[u]^{f_P} & \bigoplus_{i=1}^r k[X] , &
}
\]
by considering $k[X]$-homomorphisms defined by right-multiplication by matrices, e.g., $f_U$ denotes the $k[X]$-homomorphism defined by $v \mapsto v U$ for $v \in \bigoplus_{i=1}^m k[X]$.

Here, $\varphi_0$ canonically induces a $k[X]$-isomorphism from $\mathrm{Coker}(f_U)$ to $S_n / I_n$, and $\mathrm{Coker}(f_{\tilde{U}})$ is $k[X]$-isomorphic to $\mathrm{Coker}(f_U)$ via $f_Q^{-1} = f_{Q^{-1}}$.
It follows from the form of $\tilde{U}$ given in \eqref{eq:formU} that $\{ e_{r-N+i} : 1 \leq i \leq N \}$ in $\bigoplus_{i=1}^r k[X]$ gives rise to a basis of $\mathrm{Coker}(f_{\tilde{U}})$.
Since each $W_i$ is the image of $e_{r-N+i}$ by $f_{Q^{-1}}$, we have that $\{ W_1, \ldots , W_N \}$ is a basis of $\mathrm{Coker}(f_U)$, and thus $\{ w_i = \varphi_0(W_i) : 1 \leq i \leq N \}$ is also a basis of $S_n /I_n$.
\end{proof}

\noindent \underline{{\bf Step (A-3)}: Computation of a basis of the conductor $\mathfrak{C}_A$}: 

Since $\{ w_1, \ldots , w_N \}$ computed in Step (A-2) is a basis of $\overline{k[C]} = S_n / I_n$ by Lemma \ref{lem:step2}, it follows from Lemmas \ref{lem:dual_basis} and \ref{lem:dual_basis2} that $\{ w_1^{\ast}, \ldots , w_N^{\ast} \}$ is a basis of the complementary module $\mathcal{C}_{\overline{k[C]} / k[x]}$ as a $k[x]$-module.
Thus, by \eqref{eq:conductor}, the conductor $\mathfrak{C}_A$ as a $k[x]$-module is spanned by $\mathcal{F} = F_y \cdot \{ w_1^{\ast}, \ldots , w_N^{\ast} \}$.

\noindent \underline{{\bf Step B}}:

Recall from the previous paragraph that $\mathcal{F}$ computed in Step (A-3) is a basis of $\mathfrak{C}_A \subset k[x,y]$ as a $k[x]$-module, and thus it is also a generator set of $\mathfrak{C}_A$ as an ideal of $k[x,y]$.
Hence, for proving the output to be a $k$-basis of $\mathfrak{C}_A^{(N-3)}$, it suffices to show the following lemma:

\begin{lemma}\label{lem:DegreeBounded}
Let $I \subset S:=k[x_1, \ldots , x_n]$ be an ideal, and $\mathcal{G}$ its Gr\"{o}bner basis with respect to a graded monomial order $\succ$ on $x_1, \ldots , x_n$, i.e., a monomial order first comparing the total degrees of two monomials.
Then, for each integer $d \geq 0$, the (finite-dimensional) $k$-vector subspace $I_{\leq d} := \{ f \in I : \mathrm{deg}(f) \leq d \}$ of $I$ is generated by 
\begin{equation}\label{eq:gen2}
\{ m g : m \in \mathrm{Mon}(S),\ g \in \mathcal{G},\ \mathrm{deg}(mg) \leq d \}
\end{equation}
as a $k$-vector space, where $\mathrm{Mon}(S)$ denotes the set of all monomials in $S$.

Hence, a basis of $I_{\leq d}$ as a $k$-vector space can be also obtained by computing the row echelon form of the Macaulay matrix corresponding to \eqref{eq:gen2}.
\end{lemma}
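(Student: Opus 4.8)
The plan is to prove the displayed equality of $k$-vector spaces by establishing the two inclusions separately, the only nontrivial one being that every $f \in I_{\leq d}$ lies in the $k$-span of the products appearing in \eqref{eq:gen2}. The easy direction is immediate: if $g \in \mathcal{G} \subset I$ and $m \in \mathrm{Mon}(S)$ with $\deg(mg) \leq d$, then $mg \in I_{\leq d}$ since $I$ is an ideal; hence the $k$-span of \eqref{eq:gen2} is contained in $I_{\leq d}$.

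For the reverse inclusion I would run the leading-term reduction of $f$ modulo $\mathcal{G}$. Starting from $p := f$, as long as $p \neq 0$ one has $p \in I$, so $\mathrm{LT}(p) \in \langle \mathrm{LT}(\mathcal{G}) \rangle_S$ by the Gr\"obner basis property; pick $g \in \mathcal{G}$ with $\mathrm{LM}(g) \mid \mathrm{LM}(p)$, set $m := \mathrm{LM}(p)/\mathrm{LM}(g) \in \mathrm{Mon}(S)$ and $c := \mathrm{LC}(p)/\mathrm{LC}(g) \in k^{\times}$, and replace $p$ by $p - c\,m\,g$. Each step leaves $p$ in $I$, strictly lowers $\mathrm{LM}(p)$ in the well-order $\succ$, and therefore terminates with $p = 0$ after finitely many steps. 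The crucial point is the degree bookkeeping: since $\succ$ is graded, every nonzero $h \in S$ satisfies $\deg(\mathrm{LM}(h)) = \deg(h)$; applying this to the dividend $p$ and to $mg$ (whose leading monomial is $m \cdot \mathrm{LM}(g) = \mathrm{LM}(p)$) gives $\deg(mg) = \deg(\mathrm{LM}(p)) = \deg(p)$, and $\deg(p)$ is non-increasing along the reduction and starts at $\deg(f) \leq d$. Hence every product $c\,m\,g$ that is subtracted has $\deg(mg) \leq d$, and collecting these terms expresses $f = \sum_i c_i\,m_i\,g_i$ with $c_i \in k$, $m_i \in \mathrm{Mon}(S)$, $g_i \in \mathcal{G}$ and $\deg(m_i g_i) \leq d$; that is, $f$ lies in the $k$-span of \eqref{eq:gen2}.

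For the final assertion I would observe that the Macaulay matrix $M$ attached to \eqref{eq:gen2} has as rows the coordinate vectors of the listed products $mg$ with respect to the monomial basis of the finite-dimensional space $S_{\leq d}$ of polynomials of total degree $\leq d$; by the previous paragraph its $k$-row space is exactly the image of $I_{\leq d}$ under this coordinate isomorphism, so any (reduced) row echelon form of $M$ yields a $k$-basis of $I_{\leq d}$. The step I expect to need the most care is precisely the degree bookkeeping in the reduction loop, where the graded hypothesis on $\succ$ is what guarantees $\deg(mg) = \deg(\mathrm{LM}(p)) \leq \deg(p) \leq d$ at every step: for a non-graded order, such as a pure lexicographic one, this fails, since an intermediate multiplier $m$ can raise the total degree above $d$. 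The remaining ingredients — termination of the reduction via the well-order, vanishing of the remainder because $\mathcal{G}$ is a Gr\"obner basis, and the translation into row reduction of a Macaulay matrix — are routine.
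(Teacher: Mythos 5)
Your proof is correct and follows essentially the same route as the paper: the paper invokes the division algorithm to write $f=\sum_i h_ig_i$ with the multidegree of each $h_ig_i$ bounded by that of $f$ and then expands each $h_i$ into monomials, while you unwind that same division into an explicit leading-term reduction loop; in both cases the graded hypothesis on $\succ$ is exactly what turns the bound on leading monomials into the bound $\deg(mg)\leq d$. No gaps.
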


\begin{proof}
Let $f \in I_{\leq d}$ be an arbitrary element, and put $\mathcal{G} = \{ g_1, \ldots , g_s \}$.
By the division algorithm together with the definition of a Gr\"{o}bner basis, there exist $h_1, \ldots , h_s \in S$ such that $f = \sum_{i=1}^s h_i g_i$, where the multi-degree of each $h_i g_i$ with respect to $\succ$ does not exceed that of $f$.
In particular, $\mathrm{deg}(h_i g_i) \leq \mathrm{deg}(f)$ for all $1 \leq i \leq s$.
Writing $h_{i} = \sum_{j=1}^{t_i} c_{i,j} m_{i,j}$ for $c_{i,j} \in k$ and $m_{i,j} \in \mathrm{Mon}(S)$ with $m_{i,1} \succ \cdots \succ m_{i,t_i}$, we have $\mathrm{LM}(m_{i,j} g_i) \preceq \mathrm{LM}(m_{i,1} g_i ) = \mathrm{LM}(h_i g_i)$, and thus $\mathrm{deg}(m_{i,j} g_i ) \leq \mathrm{deg}(f)$ for all $1 \leq j \leq t_i$.
Hence, $f = \sum_{i=1}^s \sum_{j=1}^{t_i} c_{i,j} (m_{i,j} g_i )$ with $\mathrm{deg}(m_{i,j} g_i) \leq \mathrm{deg}(f)\leq d$, as desired.
\end{proof}

By Lemma \ref{lem:DegreeBounded}, one can easily verify that the set of elements in the output modulo $F$ gives rise to a $k$-basis of $\mathfrak{C}_A^{(N-3)}$, where modulo $F$ means to take the reminder of an element by the division of $F$ as a polynomial in $Y$.



\subsection{Details of Step (A-2): Elimination on syzygy modules with modulo}\label{subsec:details}

In Step (A-2), we compute a generator set of \eqref{eq:eliminated_modulo_syzygy}.
Here we consider a more general case problem, and give a method to solve it:
Let $S = k[x_1, \ldots, x_n]$, $I \subset S$ an ideal, and $S^{(\ell)} = k[x_1, \ldots , x_{\ell}]$, where $1 \leq \ell \leq n-1$.
Given finite elements $B_1, \ldots , B_r, \ldots , B_{r+1}, \ldots , B_{r+s} \in S$ with $I = \langle B_{r+1}, \ldots , B_{r+s} \rangle_S$, we would like to compute a generator set of the $S^{(\ell)}$-module
\begin{equation}\label{eq:kernel}
 \left\{ (a_1, \ldots , a_{r}) \in \bigoplus_{i=1}^r S^{(\ell)} e_i : \sum_{i=1}^r a_i B_i = 0 \bmod{I} \right\},
\end{equation}
which is $\mathrm{syz}_I (B_1, \ldots , B_r) \cap \bigoplus_{i=1}^r S^{(\ell)} e_i$, namely, the kernel of the composition of the following maps:
\begin{equation}
\begin{CD}
\bigoplus_{i=1}^r S^{(\ell)} e_i @>\iota >> \bigoplus_{i=1}^r S e_i @>\varphi >> S @>{\bmod{I}} >> S / I , \label{eq:resolution}
\end{CD}
\end{equation}
where $\iota$ is the canonical inclusion and $\varphi $ is defined by $e_i \mapsto B_i$ for $1 \leq i \leq r$.
Here we denote by $e_i$ the vector with $1$ in its $i$-th coordinate and $0$'s elsewhere.
We also denote by $\overline{\varphi} : \bigoplus_{i=1}^r S e_i \rightarrow S/I$ the composition map of $\varphi$ and $\bmod{\; I}$.
Note that $\mathrm{syz}_I (B_1, \ldots , B_r) = \mathrm{Ker}(\overline{\varphi})$, and that \eqref{eq:kernel} is equal to $\mathrm{Ker}(\overline{\varphi}) \cap \bigoplus_{i=1}^r S^{(\ell)} e_i$.

Recall from Lemma \ref{lem:ModSyz} that, for given $\{ B_1, \ldots , B_r\}$ and $\{ B_{r+1}, \ldots , B_{r+s} \}$, a Gr\"{o}bner basis $\mathcal{G}$ of $\mathrm{Ker} (\overline{\varphi})$ with respect to an arbitrary monomial order is computed.
With this $\mathcal{G}$, a generator set of $\mathrm{Ker}(\overline{\varphi}) \cap \bigoplus_{i=1}^r S^{(\ell)} e_i $ can be also computed by the following lemma:

\begin{lemma}[Elimination theorem for free modules]\label{lem:elim}
Let $S = k[x_1, \ldots , x_n]$ be the polynomial ring of $n$ variables over a field $k$.
Let $M = \bigoplus_{i=1}^r S e_i$ be the free module over $S$ of rank $r$ with standard basis $\{ e_1, \ldots , e_r \}$.
Let $N \subset M$ be a submodule of $M$ and $\mathcal{G}$ a Gr\"{o}bner basis of $N$ with respect to $\succ_{\rm TOP}$ derived from an elimination monomial order $\succ$ on $S$ with $x_1, \ldots , x_{\ell} \prec x_{\ell+1} , \ldots , x_n$.
Put $S^{(\ell)} := k[x_1, \ldots , x_{\ell}]$ and $M_{\ell} := \bigoplus_{i=1}^r S^{(\ell)} e_i $.
Then, $\mathcal{G}_{\ell} := \mathcal{G} \cap M_{\ell}$ is a Gr\"{o}bner basis of $N_{\ell} := N \cap M_{\ell}$ with respect to the restriction of $\succ_{\rm TOP}$ to $M_{\ell}$.
\end{lemma}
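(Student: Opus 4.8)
The statement is the free-module analogue of the classical Elimination Theorem for ideals, and the plan is to imitate that proof, the one new ingredient being that the TOP construction lets the elimination property of $\succ$ on $S$ pass to $\succ_{\rm TOP}$ on $M$. First I would record the easy inclusion $\mathcal{G}_\ell = \mathcal{G} \cap M_\ell \subseteq N \cap M_\ell = N_\ell$, so that $\mathcal{G}_\ell$ is a finite subset of $N_\ell$; it then remains to check the Gr\"{o}bner basis property. I would also observe that for any $h \in M_\ell$ the leading monomial $\mathrm{LM}_{\succ_{\rm TOP}}(h)$ is the same whether computed in $M$ or under the restriction of $\succ_{\rm TOP}$ to $M_\ell$ (it is the maximum of the same finite set of module monomials), so that it suffices to show: for every nonzero $f \in N_\ell$ there is $g \in \mathcal{G}_\ell$ with $\mathrm{LT}_{\succ_{\rm TOP}}(g)$ dividing $\mathrm{LT}_{\succ_{\rm TOP}}(f)$.

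So fix $0 \neq f \in N_\ell$. Since $f \in N$ and $\mathcal{G}$ is a Gr\"{o}bner basis of $N$, there is $g \in \mathcal{G}$ with $\mathrm{LT}_{\succ_{\rm TOP}}(g) \mid \mathrm{LT}_{\succ_{\rm TOP}}(f)$; writing $\mathrm{LM}(f) = m e_i$ and $\mathrm{LM}(g) = m' e_i$ with $m' \mid m$ in $S$, and noting that $f \in M_\ell$ forces every monomial occurring in $f$, hence $m$, to lie in $\mathrm{Mon}(S^{(\ell)})$, we get $m' \in \mathrm{Mon}(S^{(\ell)})$ as well. The crucial step is to conclude that $g \in M_\ell$. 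Suppose not: then some term of $g$, sitting in some coordinate $k$, has monomial part $n \in \mathrm{Mon}(S)$ divisible by one of $x_{\ell+1}, \dots, x_n$. Because $\succ$ is an elimination order with $x_1, \dots, x_\ell \prec x_{\ell+1}, \dots, x_n$, we have $n \succ \mu$ for every $\mu \in \mathrm{Mon}(S^{(\ell)})$, in particular $n \succ m'$; and then the very definition of the TOP extension gives $n e_k \succ_{\rm TOP} m' e_i = \mathrm{LM}_{\succ_{\rm TOP}}(g)$, contradicting that $m' e_i$ is the leading monomial of $g$. Hence $g \in \mathcal{G} \cap M_\ell = \mathcal{G}_\ell$ and $\mathrm{LT}_{\succ_{\rm TOP}}(g) \mid \mathrm{LT}_{\succ_{\rm TOP}}(f)$, which is exactly what was needed. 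Consequently $\langle \mathrm{LT}_{\succ_{\rm TOP}}(\mathcal{G}_\ell) \rangle_{S^{(\ell)}} = \langle \mathrm{LT}_{\succ_{\rm TOP}}(N_\ell) \rangle_{S^{(\ell)}}$, i.e.\ $\mathcal{G}_\ell$ is a Gr\"{o}bner basis of $N_\ell$ with respect to the restricted order; that $\mathcal{G}_\ell$ then also generates $N_\ell$ as an $S^{(\ell)}$-module is the standard consequence of the Gr\"{o}bner basis property via the module division algorithm.

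The only genuinely delicate point, and the reason a TOP rather than a POT extension is required here, is the implication ``$\mathrm{LM}_{\succ_{\rm TOP}}(g) \in M_\ell \Rightarrow g \in M_\ell$'': under a POT extension the position index would dominate the comparison, so a term carrying a $\succ$-large monomial in a later coordinate need not be leading, and this implication would fail. Apart from that, the argument is bookkeeping; I would briefly check that $\succ_{\rm TOP}$ built from an elimination order (which is still a monomial order on $S$) is a legitimate module monomial order in the sense of Subsection~\ref{subsec:Groebner}, and that restriction to $M_\ell$ commutes with taking leading terms of elements of $M_\ell$, while highlighting the TOP/elimination interaction as the heart of the proof.
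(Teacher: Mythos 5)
Your proof is correct and follows essentially the same route as the paper's: reduce to showing that the Gr\"obner basis element $g$ dividing the leading term of a given $f\in N_\ell$ must itself lie in $M_\ell$, which follows from the TOP construction combined with the elimination property of $\succ$. Your write-up merely spells out in more detail the step the paper dispatches with ``it follows from the definition of the TOP order,'' and adds a useful aside on why POT would fail here.
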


\begin{proof}
It suffices to show $\langle \mathrm{LT}(\mathcal{G}_{\ell}) \rangle_{S^{(\ell)}} \supset \langle \mathrm{LT} (N_{\ell}) \rangle_{S^{(\ell)}}$.
For this, take $v \in N_{\ell} \subset N$, then there exists $w \in \mathcal{G}$ such that $\mathrm{LT}(w)$ divides $\mathrm{LT}(v)$ since $\mathcal{G}$ is a Gr\"{o}bner basis of $N$.
Namely, we can write $\mathrm{LT}(v) = m_1 e_i$ and $\mathrm{LT}(w) = m_2 e_i$ for some monomials $m_1, m_2 \in \mathrm{Mon}(S)$ with $m_2 \mid m_1$ and some $1 \leq i \leq r$.
By $v \in N_{\ell} \subset M_{\ell}$ we have $m_1 \in S^{(\ell)}$, so that $m_2 \in S^{(\ell)}$.
It follows from the definition of the TOP order that any term $m_3 e_j$ in $w$ satisfies $m_3 \in S^{(\ell)}$, and hence $w \in M_{\ell}$.
Thus $w \in \mathcal{G}_{\ell}$, as desired.
\end{proof}


Based on Lemma \ref{lem:elim}, we here write down explicit procedures to compute a generator set of \eqref{eq:kernel}:
\begin{algorithm}[Computation of a modulo-syzygy with elimination of variables]\label{alg:elimination}
~
\begin{enumerate}
\item[{\it Input}:] Finite elements $B_1, \ldots , B_r, \ldots , B_{r+1}, \ldots , B_{r+s} \in S$ and $1 \leq \ell \leq n-1$.
\item[{\it Output}:] A generator set of $\mathrm{syz}_I (B_1, \ldots , B_r ) \cap \bigoplus_{i=1}^r S^{(\ell)}$, where $I$ is an ideal in $S$ generated by $B_{r+1}, \ldots , B_{r+s}$.
\end{enumerate}

\begin{enumerate}
\item Compute a Gr\"{o}bner basis $\tilde{\mathcal{G}}$ of the syzygy module
\[
\mathrm{syz}(B_1, \ldots , B_r, B_{r+1}, \ldots , B_{r+s} ) = \left\{ (a_1, \ldots , a_{r+s}) \in \bigoplus_{i=1}^{r+s} Se_i  : \sum_{i=1}^{r+s} a_i B_i = 0 \right\},
\]
that is, the kernel of the $S$-homomorphism $\psi : \bigoplus_{i=1}^{r+s} S \longrightarrow S e_i \ ; \ e_i \mapsto B_i$.
For this, we apply Lemma \ref{lem:ComputeSyzygy}, where we use a product order $\succ_{1,2} = (\succ_1, \succ_2)$ on $S^{r+s} = \bigoplus_{i=1}^{r+s} S e_i $ with $e_i \succ_{1,2} e_j$ ($1 \leq i \leq r$, $r+1 \leq j \leq r+s$):
$\succ_1$ is a TOP order on $S^{r} = \bigoplus_{i=1}^{r} S e_i$ derived from an elimination monomial order $\succ$ on $S$ with $x_1, \ldots , x_{\ell} \prec x_{\ell+1} , \ldots , x_n$, and $\succ_2$ is an arbitrary monomial order on $S^{s} = \bigoplus_{i=r+1}^{r+s} S e_i$.
Letting $\mathrm{pr} : \bigoplus_{i=1}^{r+s} S e_i \longrightarrow \bigoplus_{i=1}^{r} S e_i$ be the projection which maps $e_i$ to $e_i$ (resp.\ $0$) for $1 \leq i \leq r$ (resp.\ for $r+1 \leq i \leq r+s$), then it follows from Lemma \ref{lem:ModSyz} that $\mathcal{G} := \mathrm{pr}(\tilde{\mathcal{G}})$ is a Gr\"{o}bner basis of $\mathrm{syz}_I (B_1, \ldots , B_r)$ with respect to $\succ_1$.
\item Put $\mathcal{G}_{\ell} := \mathcal{G} \cap M_{\ell}$, and output it.
Note that by Lemma \ref{lem:elim}, $\mathcal{G}_{\ell}$ is a Gr\"{o}bner basis of \eqref{eq:kernel} with respect to the restriction of $\succ_1$ to $\bigoplus_{i=1}^r S^{(\ell)} e_i$.
\end{enumerate}
\end{algorithm}

\subsection{Details of Step (A-3): Computation of the trace}\label{subsec:details_3}

In this step, we represent $\mathrm{Tr}_{k(x,y)/k(x)} (w_i w_j)$ as an element of $k[x]$ for each $1 \leq i, j \leq N$.
We here describe a method to find such a representation for an arbitrary element $\overline{a} \in S_n/I_n = \overline{k[C]} \subset k(x,y)$, where $\overline{a}$ denotes the residue class of an element $a$ of $S_n$ modulo $I_n$.
We denote by $\sigma_a$ a linear map defined by multiplication by $a$.
Recall that the trace of $a$ is computed as the trace of the representation matrix of $\sigma_a$ with respect to the basis $\{ w_1, \ldots , w_N \}$ for $k(x,y)$.
Thus, it suffices to find $a_{i,j} \in k[x]$ with $1 \leq  i,j \leq N$ such that $a w_i \equiv a_{i,1} w_1 + \cdots + a_{i,N} w_N \bmod{I_n}$.

We first recall that $\mathcal{B} = \{ B_1, \ldots , B_r \}$ defined in \eqref{eq:gen} generates $S_n / I_n$ over $k[x]$, and thus there exist $b_{i,j} \in k[x]$ with $1 \leq i, j \leq r$ such that $a w_i \equiv \sum_{j=1}^r b_{i,j} B_j \bmod{I_n}$.
We can find such $b_{i,j}$ as follows:
Let $\mathrm{syz}_{I_n}(a w_i, B_1, \ldots , B_r) \subset \bigoplus_{i=0}^r S_n e_i$ be a modulo-syzygy defined as in \eqref{eq:modulo-syzygy} whose Gr\"{o}bner basis $\mathcal{G}'$ with respect to an arbitrary monomial order on $S_n^{r+1}$ can be computed by Lemma \ref{lem:ModSyz}.
Compute a Gr\"{o}bner basis $\mathcal{G}$ of $\mathrm{syz}_{I_n}(a w_i, B_1, \ldots , B_r) \subset \bigoplus_{i=0}^r S_n e_i$ with respect to a TOP extension of a monomial order on $S$ with the elimination property with respect to $x$.
It follows from Lemma \ref{lem:elim} that $\mathcal{G} \cap \bigoplus_{i=0}^r k[X]$ is a Gr\"{o}bner basis of $\mathrm{syz}_{I_n}(a w_i, B_1, \ldots , B_r) \cap \bigoplus_{i=0}^r k[X]$ with respect to the TOP order induced from that for $\bigoplus_{i=0}^r S_n e_i$.
Writing $\mathcal{G} \cap \bigoplus_{i=0}^r k[X] = \{ \mathbf{g}_1, \ldots , \mathbf{g}_s \}$, it follows from $(1, b_{i,1}, \ldots , b_{i,r}) \in \mathrm{syz}_{I_n}(a w_i, B_1, \ldots , B_r) \cap \bigoplus_{i=0}^r k[X]$ that the first coordinates of $\mathbf{g}_1, \ldots , \mathbf{g}_s$ are coprime, and hence there exists $h_1, \ldots , h_s \in k[X]$ such that the sum of the first coordinates of $h_1 \mathbf{g}_1, \ldots , h_s \mathbf{g}_s$ is $1$.
Since $k[X]$ is an Euclid domain, we can compute such $h_1, \ldots , h_s$ by the extended Euclidean algorithm, and $b_{i,1}, \ldots , b_{i,r}$ are obtained by putting $(1,b_{i,1}, \ldots , b_{i,r}) = h_1 \mathbf{g}_1 + \cdots + h_s \mathbf{g}_s$.

In the following, we next consider to represent each $B_j$ as a $k[x]$-linear combination of $w_1, \ldots , w_N$, by using objects computed in Step (A-2).
Recall that each $w_i$ is computed in Step (A-2) as $w_i = \varphi_0(W_i)$, where
\[
\begin{pmatrix}
W_1 \\
\vdots \\
W_N
\end{pmatrix}
= 
\begin{pmatrix}
e_{r-N+1} \\
\vdots \\
e_r
\end{pmatrix}
\cdot Q^{-1}.
\]
Since $\{ W_i' := e_i Q^{-1} : 1 \leq i \leq r-N \}$ is a basis of $\mathrm{Im}(f_U) = \mathrm{Ker}(\varphi_0)$, we obtain an extended basis
 \[
\begin{pmatrix}
W_1' \\
\vdots \\
W_{r-N}' \\
W_1 \\
\vdots \\
W_N
\end{pmatrix}
:= 
\begin{pmatrix}
e_1 \\
\vdots \\
e_r
\end{pmatrix}
\cdot Q^{-1} 
= Q^{-1}
\begin{pmatrix}
e_1 \\
\vdots \\
e_r
\end{pmatrix}
\]
of $\bigoplus_{i=1}^r k[X]$.
It follows from $w_i = \varphi_0 (W_i)$ for $1 \leq i \leq N$ and $B_i = \varphi_0 (e_i)$ for $1 \leq i \leq r$ that
\[
\begin{pmatrix}
0 \\
\vdots \\
0 \\
w_1 \\
\vdots \\
w_N
\end{pmatrix}
:= 
Q^{-1}
\begin{pmatrix}
B_1 \\
\vdots \\
B_r
\end{pmatrix},
\]
where we also used $W_i' \in \mathrm{Ker}(\varphi_0)$ for $1 \leq i \leq r-N$.
Hence, denoting by $Q'$ the matrix obtained from $Q$ by removing its first $r-N$ columns, we have
\[
\begin{pmatrix}
\sigma_a (w_1) \\
\vdots \\
\sigma_a (w_N)
\end{pmatrix}
=
\begin{pmatrix}
a w_1 \\
\vdots \\
a w_N
\end{pmatrix}
=
\begin{pmatrix}
b_{1,1} & \cdots & b_{1,r} \\
\cdots & & \cdots \\
b_{N,1} & \cdots & b_{N,r}
\end{pmatrix}
\begin{pmatrix}
B_1 \\
\vdots \\
B_r
\end{pmatrix}
=
\begin{pmatrix}
b_{1,1} & \cdots & b_{1,r} \\
\cdots & & \cdots \\
b_{N,1} & \cdots & b_{N,r}
\end{pmatrix}
Q'
\begin{pmatrix}
w_1 \\
\vdots \\
w_N
\end{pmatrix}.
\]
Therefore, the representation matrix of $\sigma_a$ with respect to $\{ w_1, \ldots , w_N \}$ is {${}^t (B Q')$}, and hence $\mathrm{Tr}_{k(x,y)/k(x)}(\overline{a})$ is computed as the trace of the matrix ${}^t (B Q')$, by the following procedures:

\begin{enumerate}
\item For each $1 \leq i \leq N$, find $b_{i,j} \in k[x]$ with $1 \leq j \leq r$ such that $a w_i \equiv \sum_{j=1}^r b_{i,j} B_j \bmod{I_n}$, by the Gr\"{o}bner basis computation for free modules.
Let $B$ denote the $N \times r$ matrix over $k[x]$ whose $(i,j)$ entry is $b_{i,j}$.
\item Denoting by $Q'$ the matrix obtained from $Q$ by removing its first $r-N$ columns, compute ${}^t (B Q')$, and then its trace is nothing but $\mathrm{Tr}_{k(x,y)/k(x)}(\overline{a})$.
\end{enumerate}

\section{Implementation and example}\label{sec:implementation}

We implemented Algorithm \ref{alg:main} over Magma~\cite{Magma} V2.25-3 in its 64bit version, and the source code will be available at 
\begin{center}{\small
\texttt{https://sites.google.com/view/m-kudo-official-website/english/code/rdf}.
}
\end{center}
In the code, the function \texttt{SpaceOfDifferentialForms} implements Algorithm \ref{alg:main}.
Note that, as for the Gr\"{o}bner basis computation of our implementation, we utilize Magma's built-in functions (e.g., \texttt{Groebner}) where Faugere's $F_4$ algorithm and its variants are efficiently implemented.
Here, we demonstrate Algorithm \ref{alg:main} in the following example (Example \ref{ex:trigonal}), where we used our implementation.

\begin{example}\label{ex:trigonal}
Consider a plane singular curve $C = V(F)$ in $\mathbb{A}^2(\overline{k})$ over $k:=\mathbb{F}_{11}$ defined by $F := X^5 + Y^5 + X Y$, whose normalization is known to be a superspecial curve of genus $5$ over $\mathbb{F}_{11}$, see \cite{trigonal} for details.
Here let us compute an explicit basis of $H^0(\tilde{C},\varOmega_{\tilde{C}})$ as a $k$-vector space.
Note that $C'$ is singular only at $(0:0:1)$, and thus it suffices from Theorem \ref{thm:Gorenstein} to compute a basis of $\mathfrak{C}_A^{(N-3)}$ by Algorithm \ref{alg:normalization}.

\noindent \underline{{\bf Step (A-1)}: Computation of the normalization}: 

We execute Algorithm \ref{alg:normalization} for the input $(S,I)$ with $S = k[X,Y]$ and $I = \langle F \rangle_S$.
Recall from Example \ref{ex:normalization} that Algorithm \ref{alg:normalization} terminates at the first step of its second loop, and an integral closure of $A=S/I$ is obtained as $A_1 = S_1 / I_1$, where $S_1 = k[X,Y,T] $ and $I_1 = \langle P_1 \rangle_{S_1}$ with
\begin{eqnarray}
P_1 &:=& \{ T^2 + T + Y^3 X^3, Y T + 10 X^4, X T + Y^4 + X,  F \}, \nonumber 
\end{eqnarray}
together with $Q_1 = \{ T^2 + T + Y^3 X^3 \}$ and $\phi_1 : A_1 \rightarrow k(x,y)$ mapping $(x, y, t)$ to $(x,y, x^4/y)$.

In this case, the generator set \eqref{eq:gen} of $S_1/I_1$ as a $k[x]$-module is
\[
\mathcal{B} = \{ t y^4, t y^3, t y^2, t y, t, y^4, y^3, y^2, y, 1\},
\]
where we set $t := T \bmod{I_1}$.
Put $(B_1, \ldots , B_{10} ) = (t y^4, t y^3, t y^2, t y, t, y^4, y^3, y^2, y, 1)$.

\noindent \underline{{\bf Step (A-2)}: Finding a basis of $\overline{R}$ as a $k[x]$-module}: 

\begin{enumerate}
\item[{\bf (A-2-1)}] Compute a generator set of \eqref{eq:eliminated_modulo_syzygy} as a $k[X]$-module, say
\[
\mathrm{syz}_{I_1}(\mathcal{B}) \cap \bigoplus_{i=1}^{10} k[X] = \left\{ (a_1, \ldots , a_{10}) \in \bigoplus_{i=1}^{10} k[X] : \sum_{i=1}^{10}a_i B_i = 0 \bmod{I_1} \right\}.
\]
For this, we execute Algorithm \ref{alg:elimination} for the inputs $B_1, \ldots , B_{10}$ as above, $B_{11} = T^2 + T + Y^3 X^3$, $B_{12} = Y T + 10 X^4$, $B_{13} = X T + Y^4 + X$, $B_{14} = F$, and $\ell:=1$.
As a result,
\[
U = 
\begin{pmatrix}
U_1 \\
U_2 \\
U_3 \\
U_4 \\
U_5 \\
\end{pmatrix}
=
\begin{pmatrix}
10 & 0 & 0 & 0 & 0 & 0 & X^4 & 0 & 0 & 0  \\
0 & 10 & 0 & 0 & 0 & 0 & 0 & X^4 & 0 & 0  \\
0 & 0 & 10 & 0 & 0 & 0 & 0 & 0 & X^4 & 0  \\
0 & 0 & 0 & 10 & 0 & 0 & 0 & 0 & 0 & X^4  \\
0 & 0 & 0 & 0 & X & 1 & 0 & 0 & 0 & X  \\
\end{pmatrix}
\]
is a matrix whose row vectors generate $\mathrm{syz}_{I_1}(\mathcal{B}) \cap \bigoplus_{i=1}^{10} k[X]$.
\item[{\bf (A-2-2)}] The computed Smith normal form $\tilde{U}$ of $U$ and transformation matrices $P \in \mathrm{GL}_{5} (k[X])$ and $Q \in \mathrm{GL}_{10}(k[X])$ are as follows:
$P = E_5$, $\tilde{U} = \begin{pmatrix} E_5 & O_5 \end{pmatrix}$, where $E_5$ (resp.\ $O_5$) is the identity (resp.\ zero) matrix of size $5$, and
\begin{equation}
Q=
\begin{pmatrix}
10 & 0 & 0 & 0 & 0 & X^4 & 0 & 0 & 0 & 0  \\
0 & 10 & 0 & 0 & 0 & 0 & X^4 & 0 & 0 & 0  \\
0 & 0 & 10 & 0 & 0 & 0 & 0 & X^4 & 0 & 0  \\
0 & 0 & 0 & 10 & 0 & 0 & 0 & 0 & X^4 & 0  \\
0 & 0 & 0 & 0 & 0 & 0 & 0 & 0 & 0 & 10  \\
0 & 0 & 0 & 0 & 1 & 0 & 0 & 0 & 10 X & X  \\
0 & 0 & 0 & 0 & 0 & 1 & 0 & 0 & 0 & 0  \\
0 & 0 & 0 & 0 & 0 & 0 & 1 & 0 & 0 & 0  \\
0 & 0 & 0 & 0 & 0 & 0 & 0 & 1 & 0 & 0  \\
0 & 0 & 0 & 0 & 0 & 0 & 0 & 0 & 1 & 0  \\
\end{pmatrix}.
\nonumber
\end{equation}
\item[{\bf (A-2-3)}] We have 
\begin{equation}
Q^{-1}=
\begin{pmatrix}
10 & 0 & 0 & 0 & 0 & 0 & X^4 & 0 & 0 & 0  \\
0 & 10 & 0 & 0 & 0 & 0 & 0 & X^4 & 0 & 0  \\
0 & 0 & 10 & 0 & 0 & 0 & 0 & 0 & X^4 & 0  \\
0 & 0 & 0 & 10 & 0 & 0 & 0 & 0 & 0 & X^4  \\
0 & 0 & 0 & 0 & X & 1 & 0 & 0 & 0 & X  \\
0 & 0 & 0 & 0 & 0 & 0 & 1 & 0 & 0 & 0  \\
0 & 0 & 0 & 0 & 0 & 0 & 0 & 1 & 0 & 0  \\
0 & 0 & 0 & 0 & 0 & 0 & 0 & 0 & 1 & 0  \\
0 & 0 & 0 & 0 & 0 & 0 & 0 & 0 & 0 & 1  \\
0 & 0 & 0 & 0 & 10 & 0 & 0 & 0 & 0 & 0  \\
\end{pmatrix}. \nonumber
\end{equation}
Thus, putting $( w_1, w_2, w_3 , w_4, w_5 ) = (y^3, y^2, y, 1, 10 t)$, one also has that $\mathcal{W} = \{ w_1, \ldots , w_5 \}$ is a basis of $S_1/I_1$ as a $k[x]$-module.
\end{enumerate}

\noindent \underline{{\bf Step (A-3)}: Computation of a basis of the conductor $\mathfrak{C}_A$}: 
\begin{enumerate}
\item[{\bf (A-3-1)}] For each $1 \leq i , j \leq 5$, compute $\mathrm{Tr}_{k(x,y)/k(x)}(w_i w_j) \in k[x]$.
We here show details on the computation only for $\mathrm{Tr}_{k(x,y)/k(x)}(w_1^2)$.
By computing a generator set of $\mathrm{syz}_{I_1} (w_1^2, B_1, \ldots , B_{10}) \cap \bigoplus_{i=0}^{10} k[X]$ with a method described in Subsection \ref{subsec:details_3}, we have a relation
\[
\begin{pmatrix}
w_1^2 w_1 \\
w_1^2 w_2 \\
w_1^2 w_3 \\
w_1^2 w_4 \\
w_1^2 w_5
\end{pmatrix}
\equiv
\begin{pmatrix}
0      & 0    &  0 & x^{2} & 0 & 10 x^{5} & 0 & 0 & x^2 & 0\\
10x      & 0    &  0 & 0 & 0 & 10 x & 0 & 0 & 0 & 0\\
0      & 10x    &  0 & 0 & 0 & 0 & 10x & 0 & 0 & 0\\
0      & 0    &  10x & 0 & 0 & 0 & 0 & 10 x & 0 & 0\\
0      & 0    &  x & x^{5} & 0 & 0& 0 & 0 & 0 & 0\\
\end{pmatrix}
\cdot 
\begin{pmatrix}
B_1\\
\vdots \\
B_{10} 
\end{pmatrix} 
\]
modulo $I_1$.
Let $Q'$ be the matrix obtained from $Q$ by removing its first $5$ columns.
Since ${}^t (B_1, \ldots , B_{10}) = (Q') {}^t (w_1, \ldots , w_{5})$, we have
\begin{equation}
\begin{pmatrix}
w_1^2 w_1 \\
w_1^2 w_2 \\
w_1^2 w_3 \\
w_1^2 w_4 \\
w_1^2 w_5
\end{pmatrix}
\equiv
\begin{pmatrix}
 0 & 0 & x^2 & 2 x^6 & 10 x^6  \\
 10 x^5 & 0 & 0 & x^2 & 10x^2  \\
 10 x& 10x^5 & 0 & 0 & 0  \\
 0 & 10x & 10x^5 & 0 & 0  \\
 0 & 0 & x^5 & x^9 & 0  \\
\end{pmatrix}
\begin{pmatrix}
w_1 \\
 w_2 \\
w_3 \\
w_4 \\
 w_5
\end{pmatrix},\nonumber
\end{equation}
and hence $\mathrm{Tr}_{k(x,y)/k(x)}(w_1^2)= 0$.

As a result, the computed matrix of traces is
\[
\left( \mathrm{Tr}_{k(x,y)/k(x)}(w_i w_j ) \right) =
\begin{pmatrix}
0       & 6 x^5     & 7 x      & 0 & 0\\
6 x^5  & 7 x        & 0        & 0 & 0\\
7 x      & 0          & 0        & 0 & 6 x^4\\
0       &  0   &  0     &  5 & 1\\
0     &   0   &  6 x^4     & 1   & 1
\end{pmatrix}.
\]
\item[{\bf (A-3-2)}] Computing $( \mathrm{Tr}_{k(x,y)/k(x)}(w_i w_j ) )^{-1} \in \mathrm{GL}_5(k(x))$, we have
{\small
\begin{eqnarray}
& & 
{}^t \!
\begin{pmatrix}
w_1^{\ast} & w_2^{\ast} & w_3^{\ast} & w_4^{\ast} & w_5^{\ast} 
\end{pmatrix}
=
\left( \mathrm{Tr}_{k(x,y)/k(x)}(w_i w_j ) \right)^{-1} \cdot 
{}^t \! 
\begin{pmatrix}
y^3 & y^2 & y & 1 & 10 t
\end{pmatrix}
\nonumber \\
&=&
\frac{1}{x^{15} + 3}   
\begin{pmatrix}
5x^6      & 2x^{10}    &  \frac{2}{x} & 3x^{3} & 7 x^3\\
2x^{10} & \frac{2}{x}    & 3x^{3}  & 10 x^7 & 5x^7 \\
\frac{2}{x} & 3x^{3} & 10 x^7 & 4x^{11} & 2x^{11} \\
3x^{3} & 10 x^7  &  4x^{11} & 9x^{15}+9 & 2\\
7x^3         & 5x^7        & 2x^{11} & 2  & 1
\end{pmatrix}
\cdot 
\begin{pmatrix}
y^3\\
 y^2 \\
 y \\
1 \\
10 t
\end{pmatrix}, \nonumber 
\end{eqnarray}
}
where $w_i^{\ast} \in k(x)[y,t]$ for each $1 \leq i \leq 5$.
\item[{\bf (A-3-3)}] Substituting $x^4/y$ into $t$ in $w_i^{\ast}$ for $1 \leq i \leq 5$, we have
\[
F_y \cdot
\begin{pmatrix}
w_1^{\ast} & \cdots & w_5^{\ast}
\end{pmatrix}
\equiv
\begin{pmatrix}
y & y^2 & y^3 & y^4 & x
\end{pmatrix}
\pmod{F}.
\]
Thus, $\{ y,y^2, y^3, y^4, x \}$ spans $\mathfrak{C}_A$ as a $k[x]$-module, and hence it also generates $\mathfrak{C}_A$ as an ideal (i.e., as a $k[x,y]$-module).
\end{enumerate}

\noindent \underline{\bf Step B}:
Finally, we compute a basis of $\mathfrak{C}_A^{(2)} := \{ \phi \in \mathfrak{C}_A : \mathrm{deg}(\phi) \leq 2 \}$ as a $k$-vector space, by the following three procedures:
\begin{enumerate}
\item[{\bf (B-1)}] A computed Gr\"{o}bner basis of the ideal $\langle Y, Y^2, Y^3, Y^4, X, F(X,Y) \rangle_{k[X,Y]}$ with respect to the graded lexicographic order with $X \succ Y$ is $\mathcal{G} = \{ X, Y \}$.
\item[{\bf (B-2)}] Putting $\mathcal{S} := \{ m g : m \in \mathrm{Mon}(k[X,Y]), \ g \in \mathcal{G},\ \mathrm{deg}(mg) \leq 2\}$, we have $\mathcal{S} = \{ Y^2, X Y, X^2, Y, X\}$.
The Macaulay matrix $M$ corresponding to $\mathcal{S}$ with respect to the set of monomials of degree $\leq 2$ is as follows:
\begin{eqnarray*}
\begin{split}
 M =
  \kbordermatrix{
        & X^2 & X Y & Y^2 & X & Y & 1 \\
     Y^2 & 0 & 0 & 1 & 0 & 0 & 0\\
     XY & 0 & 1 & 0 & 0 & 0 & 0\\
     X^2 & 1 & 0 & 0 & 0 & 0 & 0\\
     Y & 0 & 0 & 0 & 0 & 1 & 0\\
     X & 0 & 0 & 0 & 1 & 0 & 0\\
    }.
\end{split}    
\end{eqnarray*}
\item[{\bf (B-3)}] The reduced row echelon form $\mathrm{ref}(M)$ of $M$ is $\begin{pmatrix} E_5 & {\bf 0} \end{pmatrix}$, where $E_5$ (resp.\ ${\bf 0}$) is the identity matrix of size $5$ (resp.\ the zero column vector of dimension $5$).
Thus the set of its corresponding polynomials is $\{ x^2, x y, y^2, x, y \}$, which gives rise to a basis of $\mathfrak{C}_A^{(2)} $ as a $k$-vector space.
\end{enumerate}

Therefore, it follows from Theorem \ref{thm:Gorenstein} that
\[
\left\{ \frac{x^2}{y^4 + x} dx, \frac{x y}{y^4 + x} dx,  \frac{y^2}{y^4 + x} dx, \frac{x}{y^4 + x} dx, \frac{y}{y^4 + x} dx \right\}
\]
spans $H^0 (\tilde{C}, \varOmega_{\tilde{C}})$ as a $k$-vector space.



\end{example}


\section{Computing Cartier-Manin matrices}\label{sec:Cartier-Manin}
Let $C=V(F)$ be a plane curve with $\deg(F)=N$.
Let $C'$ be the the Zariski closure of $C$ in ${\mathbb P}^2$.
Let $\pi: \tilde C \to C'$ be the normalization.
The aim of this section is to describe the Cartier operator (Cartier-Manin matrix)
on $H^0({\tilde C}, \varOmega_{\tilde C})$
and to give a remark on computation of a basis of $H^1({\tilde C}, {\mathcal O}_{\tilde C})$.

In the previous sections, we gave an algorithm
to obtain a basis of $H^0({\tilde C}, \varOmega_{\tilde C})$.
Let ${\mathcal V}$ be the Cartier operator on $H^0({\tilde C}, \varOmega_{\tilde C})$
(usually the symbol $\mathcal C$ is used for the Cartier operator,
but this symbol is used for complementary modules in Section \ref{subsec:complementary}).
We can calculate the Cartier-Manin matrix of $\tilde C$
with respect to the basis.
We recall the result \cite[Theorem 1.1]{SV} by St\"ohr and Voloch:
\begin{equation}\label{formula:Stohr-Voloch}
{\mathcal V} \left(\phi\frac{dx}{F_y}\right)
=\left(\frac{\partial^{2p-2}}{\partial x^{p-1}\partial y^{p-1}}
(F^{p-1}\phi)\right)^{1/p} \frac{dx}{F_y}.
\end{equation}
\begin{example}\label{exam:Cartier-Manin}
\begin{enumerate}
\item[(1)] Consider $C=V(F)$ with $F=x^5+y^5+xy$ over ${\mathbb F}_{11}$ as in Example \ref{ex:trigonal}. As we have seen,
as a basis of $H^0({\tilde C}, \varOmega_{\tilde C})$ we have
$\left\{\left.\phi_i\frac{dx}{F_y}\ \right|\ i=1,\ldots,5\right\}$
with $(\phi_1,\ldots,\phi_5)=(x^2,xy,y^2,x,y)$.
It is straightforward to see that the Cartier matrix of $\tilde C$ is zero, i.e.,
$\tilde C$ is superspecial.
\item[(2)]
Consider $C=V(F)$ with $F=x^5+y^5 + (x+y)^3 + xy$ over $k={\mathbb F}_2$. 
This curve is singular at $(x,y)=(0,0), (1,0), (0,1)$ and $C'$ is regular at the infinite points.
By executing an implementation of our algorithm in Section \ref{sec:main},
$H^0({\tilde C}, \varOmega_{\tilde C})$ has a basis
$\left\{\left.\phi_i\frac{dx}{F_y}\ \right|\ i=1,2,3\right\}$ with $\phi_1 = x^2+x$, $\phi_2=xy$ and $\phi_3=y^2+y$.
A straightforward computation, for example $(\partial^{2}F\phi_1/\partial x\partial y)^{1/2} = (y^4 + y^2)^{1/2} = \phi_3$, shows that the Cartier-Manin matrix of $\tilde C$ with respect to this basis is
\[
\begin{pmatrix}
0 & 0 & 1\\
0 & 0 & 0\\
1 & 0 & 0
\end{pmatrix}.
\]
\item[(3)] Consider $C=\Spec(A)$, where $A=\F_2[x,y]/\langle F \rangle$ with $F=y^7-x^2(x-1)^2$.
Note that $C'$ is singular at every $\F_2$-rational point,
which means that any $\F_2$-linear transformation can not make the infinity regular.
By executing an implementation of our algorithm, we see that ${\frak C}_A^{(4)}$ has a basis
\[
\{x^4 + x,\ 
x^3y + xy,\ 
x^2y^2 + xy^2,\ 
xy^3,\ 
y^4,\ 
x^3 + x,\ 
x^2y + xy,\ 
y^3,\ 
x^2 + x\} .
\]
The $\F_2$-subspace of ${\frak C}_A^{(4)}$ where the associated differential form is regular at the infinite point is generated by $y^4$, $y^3$, $x^2+x$
(by looking at the conductor at the infinite point or at ${\frak C}_{A'}^{(4)}$, cf.\ Remark \ref{rem:Gorenstein}).
Thus, a basis of $H^0(\tilde C, \varOmega_{\tilde C})$ can be taken as $1/y^2 dx$, $1/y^3 dx$ and $(x^2+x)/y^6 dx$.
The Cartier-Manin matrix of $\tilde C$ with respect to this basis is
\[
\begin{pmatrix}
0 & 0 & 0\\
0 & 0 & 1\\
0 & 0 & 0
\end{pmatrix}
\]
by a straightforward computation, for example ${\mathcal V}\left((x^2+x)/y^6 dx\right) = 1/y^3 dx$.
\end{enumerate}
\end{example}

Finally, we mention about $H^1(\tilde C, {\mathcal O}_{\tilde C})$.
This is just the dual notion of $H^0(\tilde C, \varOmega_{\tilde C})$,
but it is important to have an explicit basis of $H^1(\tilde C, {\mathcal O}_{\tilde C})$.
Let $U_0, U_1$ and $U_2$ be the affine open subshemes of $C'$ 
corresponding to the parts of $X\ne 0$, $Y\ne 0$ and $Z\ne 0$ respectively.
The short exact sequence
\[
\begin{CD}
0 @>>> {\mathcal O}_{C'} @>>> \pi_* {\mathcal O}_{\tilde C} @>>> \pi_* {\mathcal O}_{\tilde C}/{\mathcal O}_{C'}  @>>> 0
\end{CD}
\]
induces
\[
\begin{CD}
0 @>>> H^0(\pi_* {\mathcal O}_{\tilde C}/{\mathcal O}_{C'}) @>>> H^1(C',{\mathcal O}_{C'}) @>>> H^1({\tilde C}, {\mathcal O}_{\tilde C}) @>>> 0.
\end{CD}
\]
From the short exact sequence
\[
\begin{CD}
0 @>>> {\mathcal I} @>>> {\mathcal O}_{{\mathbb P}^2} @>>> {\mathcal O}_{{\mathcal C}'} @>>> 0
\end{CD}
\]
with the ideal sheaf ${\mathcal I}$ defining $C'$, we have
\[
H^2({\mathbb P}^2, \mathcal{O}(-N))\overset{F\times}{\simeq} H^2({\mathbb P}^2, {\mathcal I}) \simeq H^1(C',{\mathcal O}_{C'}) \simeq H^0({C'},{\mathcal K}/{\mathcal O}_{C'})/K,
\]
where $\mathcal K$ is the constant sheaf of rational functions on $C'$.
The class of $1/x^{\ell}y^{m}z^{n}$ with $\ell, m, n\ge 1$ and $\ell + m + n=N$
in $H^2({\mathbb P}^2, \mathcal{O}(-N))$
is sent to the class of $F/x^\ell y^m z^n$,
which can be written as $f_{ij}-f_{ik}+f_{jk}$ for some
$f_{ij}\in \Gamma(U_i\cap U_j, {\mathcal O}_{C'})$.
Then $\{f_{ij}\}$ gives a class of $H^1(C',{\mathcal O}_{C'})$.
For a fixed $r_0\in H^0(U_0,{\mathcal K}/{\mathcal O}_{C'})$,
we put $r_i:=f_{i0}+r_0$; then this defines an element of $H^0(C',{\mathcal K}/{\mathcal O})$, which is called a {\it repartition}.
The pairing
\[
\langle\ ,\ \rangle:\quad  
H^1(C', {\mathcal O}_{C'}) \otimes   H^0({\tilde C},\varOmega_{\tilde C}) \simeq 
H^0(C', {\mathcal K}/{\mathcal O})/K \otimes   H^0({\tilde C},\varOmega_{\tilde C}) \to k
\]
is defined by taking their residues,
and it is embedded into the commutative diagram
\[\begin{CD}
H^1(C', \mathcal O_{C'}) @. \otimes @. H^0(C',\omega_{C'}^o) @>>> k\\
@VVV @. @AAA @|\\
H^1({\tilde C}, \mathcal O_{\tilde C}) @. \otimes @. H^0({\tilde C},\varOmega_{\tilde C}) @>>> k,
\end{CD}\]
where $\omega_C^o$ is the dualizing sheaf on $C$.
In particular, the image of $H^0(\pi_* {\mathcal O}_{\tilde C}/{\mathcal O}_{C'}) \to H^1(C',{\mathcal O}_{C'})$
is equal to
\[
H^0({\tilde C},\varOmega_{\tilde C})^\perp := 
\left\{ \eta\in H^1(C',{\mathcal O}_{C'}) \ \left|\ \langle \eta , \omega\rangle = 0
\text{ for all } \omega\in H^0({\tilde C},\varOmega_{\tilde C}) \right.\right\}.
\]

\begin{lemma}
With identification $F\cdot :H^2({\mathbb P}^2, \mathcal{O}(-N))\simeq H^1(C',{\mathcal O}_{C'})$, we have
\[
H^1({\tilde C},{\mathcal O}_{\tilde C})
\simeq F\cdot H^2({\mathbb P}^2, \mathcal{O}(-N))/H^0({\tilde C},\varOmega_{\tilde C})^\perp.
\]
\end{lemma}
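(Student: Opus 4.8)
The plan is to read the isomorphism off directly from the two short exact sequences and the residue-pairing diagram already displayed. First I would apply the long exact cohomology sequence to
\[
0 \longrightarrow \mathcal{O}_{C'} \longrightarrow \pi_*\mathcal{O}_{\tilde C} \longrightarrow \pi_*\mathcal{O}_{\tilde C}/\mathcal{O}_{C'} \longrightarrow 0 .
\]
Since $\pi$ is finite, $R^1\pi_*\mathcal{O}_{\tilde C}=0$, so $H^1(C',\pi_*\mathcal{O}_{\tilde C})\simeq H^1(\tilde C,\mathcal{O}_{\tilde C})$; and $\pi_*\mathcal{O}_{\tilde C}/\mathcal{O}_{C'}$ is a skyscraper sheaf supported on the finitely many singular points of $C'$, hence has vanishing $H^1$. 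Thus the displayed four-term sequence
\[
0 \longrightarrow H^0(\pi_*\mathcal{O}_{\tilde C}/\mathcal{O}_{C'}) \longrightarrow H^1(C',\mathcal{O}_{C'}) \longrightarrow H^1(\tilde C,\mathcal{O}_{\tilde C}) \longrightarrow 0
\]
is exact, which exhibits $H^1(\tilde C,\mathcal{O}_{\tilde C})$ as $H^1(C',\mathcal{O}_{C'})$ modulo the image of $H^0(\pi_*\mathcal{O}_{\tilde C}/\mathcal{O}_{C'})$.

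Next I would transport this quotient along the isomorphism $F\cdot\colon H^2(\mathbb{P}^2,\mathcal{O}(-N))\overset{\sim}{\longrightarrow} H^1(C',\mathcal{O}_{C'})$ recorded above, which results from multiplication by $F$ being an isomorphism $\mathcal{O}(-N)\overset{\sim}{\to}\mathcal{I}$, from the cohomology sequence of $0\to\mathcal{I}\to\mathcal{O}_{\mathbb{P}^2}\to\mathcal{O}_{C'}\to 0$, and from $H^1(\mathbb{P}^2,\mathcal{O}_{\mathbb{P}^2})=H^2(\mathbb{P}^2,\mathcal{O}_{\mathbb{P}^2})=0$. This shows that $H^1(\tilde C,\mathcal{O}_{\tilde C})$ is $F\cdot H^2(\mathbb{P}^2,\mathcal{O}(-N))$ modulo the preimage, under $F\cdot$, of the image of $H^0(\pi_*\mathcal{O}_{\tilde C}/\mathcal{O}_{C'})$. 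Finally, that image equals $H^0(\tilde C,\varOmega_{\tilde C})^\perp$ inside $H^1(C',\mathcal{O}_{C'})$; this is exactly the statement recorded just before the lemma, and it follows from the commutative residue-pairing square: the surjection $H^1(C',\mathcal{O}_{C'})\twoheadrightarrow H^1(\tilde C,\mathcal{O}_{\tilde C})$ is the $k$-linear dual of the inclusion $H^0(\tilde C,\varOmega_{\tilde C})\hookrightarrow H^0(C',\omega_{C'}^{o})$, and since the residue pairing $H^1(C',\mathcal{O}_{C'})\otimes H^0(C',\omega_{C'}^{o})\to k$ is perfect, the kernel of that surjection is the annihilator of the image of $H^0(\tilde C,\varOmega_{\tilde C})$, namely $H^0(\tilde C,\varOmega_{\tilde C})^\perp$. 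Combining these identifications yields the asserted isomorphism.

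Everything here is routine: the vanishing of $R^1\pi_*\mathcal{O}_{\tilde C}$ and of $H^1,H^2$ of $\mathcal{O}_{\mathbb{P}^2}$, the finiteness of the non-normal locus of $C'$, and the bookkeeping of the three identifications. The only input with genuine content is the compatibility of the residue pairing on $C'$ with Serre duality on $\tilde C$ encoded in the commutative square preceding the statement; I expect checking that these two duality pairings glue correctly along $\pi$ (equivalently, that the inclusion $\pi_*\varOmega_{\tilde C}\subset\omega_{C'}^{o}$ is compatible with residues) to be the only real, and already essentially completed, obstacle.
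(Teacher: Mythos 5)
Your proposal is correct and follows essentially the same route as the paper, whose justification for this lemma is exactly the preceding discussion: the four-term cohomology sequence of $0\to\mathcal{O}_{C'}\to\pi_*\mathcal{O}_{\tilde C}\to\pi_*\mathcal{O}_{\tilde C}/\mathcal{O}_{C'}\to 0$, the identification $F\cdot:H^2(\mathbb{P}^2,\mathcal{O}(-N))\simeq H^1(C',\mathcal{O}_{C'})$, and the identification of the image of $H^0(\pi_*\mathcal{O}_{\tilde C}/\mathcal{O}_{C'})$ with $H^0(\tilde C,\varOmega_{\tilde C})^{\perp}$ via the compatibility of the residue pairing with Serre duality. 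You have merely made explicit the standard vanishing statements ($R^1\pi_*\mathcal{O}_{\tilde C}=0$, the skyscraper nature of the quotient sheaf, $H^1=H^2=0$ for $\mathcal{O}_{\mathbb{P}^2}$) that the paper leaves implicit.
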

This lemma gives us a feasible algorithm to give a basis of $H^1({\tilde C},{\mathcal O}_{\tilde C})$. In a future work, we shall give an algorithm determining the Ekedahl-Oort type of $\tilde C$
based on these descriptions of $H^0({\tilde C}, \varOmega^1_{\tilde C})$ and 
$H^1({\tilde C},{\mathcal O}_{\tilde C})$ using bases on them discussed in the present paper (cf.\ see the recent work \cite{Moonen} by Moonen, for a way to compute the Ekedahl-Oort type of nonsingular complete-intersection curves).

\if0
A Cartier-Manin triple is $(H^0({\tilde C},\Omega_{\tilde C}), {\mathcal V}, {\mathcal V}')$, where $\mathcal V$ is the Cartier operator on $H^0({\tilde C},\Omega_{\tilde C})$ and ${\mathcal V}': \Ker(\mathcal V) \to H^1({\tilde C},{\mathcal O}_{\tilde C})$. The last map is obtained as follows.
Let $\omega\in\Ker {\mathcal V}$. Then $\omega$ is an exact form, say $d f$.
This $f$ considered as an element of $H^1({\tilde C},{\mathcal O}_{\tilde C})$
as follows: Write
\[
\omega = \frac{\phi}{F_y}dx = \frac{F_y^{p-1}\phi}{F_y^p}
=d\left(\frac{\psi}{g^p}\right) + \eta^p x^{p-1}dx
= d\left(\frac{\psi}{F_y^p}\right)
\]
for some $\psi,g\in k[x,y]$. Here we used that $\mathcal{V}(\omega) = \eta dx$ is zero.
\fi

\section{Concluding Remarks}\label{sec:conc}
In this paper, we proposed an algorithm to compute an explicit basis of the space of regular differential forms on a general plane curve, whereas most of previous works study particular cases such as hyperelliptic curves. 
More precisely, we reduced the problem into computing a basis of the conductor of the coordinate ring of the curve, based on Gorenstein's work~\cite{Gorenstein}.
As for the computation of the conductor, we also confirmed that M{\v n}uk's framework~\cite{Mnuk} works well even in the positive characteristic case, and realized it as an algorithm via the theory of Gr\"{o}bner bases for free modules.
By our algorithm together with St\"ohr-Voloch's formula in \cite{SV} for computing the Cartier operator, we can symbolically compute the Cartier-Manin matrix of an arbitrary plane curve.
Our algorithm was also demonstrated for concrete examples, which will be helpful for the further use of our algorithm.

While our algorithm might derive fruitful applications such as computing invariants (e.g., Ekedahl-Oort type), the computational complexity has not been determined yet neither in theory nor in practice, due to the difficulty of estimating that of computing Gr\"{o}bner bases for free modules.
Let us leave the complexity analysis of our algorithm as future work, where it is important to find some mathematical invariants for measuring the complexity.

\subsection*{Acknowledgments}
This work was supported by JSPS Grant-in-Aid for Young Scientists 20K14301, and JSPS Grantin-Aid for Scientific Research (C) 21K03159.
This work was also supported by JST CREST Grant Number JPMJCR2113, Japan.


\begin{thebibliography}{99}
\bibitem{Archinard}
Archinard, N.:
{\it Hypergeometric abelian varieties}, 
Can.\ J.\ Math., {\bf 55}, No.\ 5, 897--932 (2003).
%
%

\bibitem{Magma}
Bosma, W., Cannon, J.\ and Playoust, C.:
\textit{The Magma algebra system. I. The user language},
Journal of Symbolic Computation, {\bf 24}, 235--265 (1997).

\bibitem{Bostan}
Bostan, A., Gaudry, P.\ and Schost, \'E.:
{\it Linear recurrences with polynomial coefficients and computation of the Cartier-Manin operator on hyperelliptic curves},
In: Mullen, G.L., Poli, A.\ and Stichtenoth, H. (Eds.),
Finite Fields and Applications. Fq 2003,
Lecture Notes in Computer Science, vol.\ {\bf 2948}, Springer, Berlin, Heidelberg, pp.\ 40--58.

\bibitem{Buchberger} 
Buchberger, B.:
{\it Ein algorithmus zum auffinden der basiselemente des restklassenringes nach einem nulldimensionalen polynomideal},
PhD thesis, Universit\"{a}t Innsbruck (1965).











%
%
\bibitem{F4}
Faug\`ere,\ J.-C.:
\textit{A new efficient algorithm for computing Gr\"{o}bner bases (F4)},
Journal of Pure and Applied Algebra, {\bf 139}, 61--88 (1999).

\bibitem{F5}
Faug\`{e}re, J.-C.:
{\it A new efficient algorithm for computing Grobner bases without reduction to zero (F5)},
In: Proceedings of the 2002 International Symposium on Symbolic and Algebraic Computation (ISSAC),
ACM Press, pp.\ 75--83, 2002.

%



%
\bibitem{Gonzalez}
Gonz\'alez, J.:
{\it Hasse-Witt matrices for the Fermat curves of prime degree},
T\^ohoku Math.\ J., II.\ Ser.\ {\bf 49}, No.\ 2, 149--163 (1997).

\bibitem{Gorenstein}
Gorenstein, D.:
\textit{An arithmetic theory of adjoint plane curves},
Trans.\ Amer.\ Math.\ Soc., {\bf 72}, 1952, 414--436.


\bibitem{GR}
Grauert, H.\ and Remmert, R.:
{\it Analytische Stellenalgebren},
Springer-Verlag, Berlin (unter Mitarbeit von O.\ Riemenschneider, Die Grundlehren der mathematischen Wissenschaften, Band 176).


\bibitem{GP}
Greuel, G.-M.\ and Pfister, G.:
{\it A Singular Introduction to Commutative Algebra}, second edition, Springer, 2007.


\bibitem{HS}
Harvey, D.\ and Sutherland, A. V.:
{\emph{Computing {H}asse--{W}itt matrices of hyperelliptic curves in average polynomial time}},
LMS J.\ Comput.\ Math., \textbf{17} (2014), no.\ suppl.\ A, 257--273.


%
%
\bibitem{Kudo2}
Kudo, M.:
{\it Computing representation matrices for the action of Frobenius on cohomology groups},
Journal of Symbolic Computation, Vol.\ {\bf 109}, 441--464, 2022.
%
%
%
%
%
\bibitem{trigonal}
Kudo, M.\ and Harashita, S.:
{\it Superspecial trigonal curves of genus $5$.}
Experimental Mathematics, Published online: 16 Apr 2020.
%
%
%
%
%
%

\bibitem{Manin}
Manin, J.\ I.:
{\it The Hasse-Witt matrix of an algebraic curve},
AMS Transl., Ser.\ 2, {\bf 45}, 245--264 (1965).
Originally published in Izv. Akad. Nauk SSSR Ser.\ Mat., {\bf 25}, 1961, 153--172.

\bibitem{Moonen}
Moonen, B.:
{\it Computing discrete invariants of varieties in positive characteristic. I. Ekedahl-Oort types of curves}, arXiv:2202.08050 [math.AG]

\bibitem{Mnuk}
M{\v n}uk, M.:
{\it An algebraic approach to computing adjoint curves},
J.\ Symbolic Computation (1997), {\bf 23}, 229--240.

\bibitem{OH}
Ohashi, R.\ and Harashita, S.:
{\it Differential forms on the curves associated to Appell-Lauricella hypergeometric series and the Cartier operator on them},
arXiv:2105.11436 [math.AG]

%
%
%
\bibitem{SV} St\"ohr, K.-O.\ and Voloch, J.\ F.:
\textit{A formula for the Cartier operator on plane algebraic curves}, 
J.\ Reine Angew. Math., {\bf 377} (1987), 49--64. 

\bibitem{Sutherland} Sutherland, A.\ V.:
\textit{Counting points on superelliptic curves in average polynomial time},
Fourteenth Algorithmic Number Theory Symposium (ANTS XIV),
The Open Book Series, {\bf 4} (2020), 403--422.

\bibitem{Hoeij}
van Hoeij, M.:
{\it An algorithm for computing an integral basis in an algebraic function field},
J.\ Symbolic Computation {\bf 18}, No.\ 4, 353--363 (1994).

\bibitem{Yui}
Yui, N.:
{\it On the Jacobian varieties of hyperelliptic curves over fields of characterisctic $p>2$},
Journal of algebra {\bf 52}, 378--410 (1978).

\bibitem{ZS} Zariski, O.\ and Samuel, P.:
{\it Commutative Algebra, Volume 1}, Springer-Verlag (1975).
%
%
\end{thebibliography}
\end{document}